\newtheorem{thm}{Theorem}[section]
\newtheorem{lem}[thm]{Lemma}
\newtheorem{cor}[thm]{Corollary}
\newtheorem{prop}[thm]{Proposition}
\newtheorem{prop/def}[thm]{Proposition/Definition}
\theoremstyle{definition}
\newtheorem{defn}[thm]{Definition}
\newtheorem{defns}[thm]{Definitions}
\newtheorem{ex}[thm]{Example}
\newtheorem{addendum}[thm]{Addendum}
\theoremstyle{remark}
\newtheorem{rem}[thm]{Remark}
\numberwithin{equation}{section}
\newcommand{\thmref}[1]{Theorem~\ref{#1}}
\newcommand{\secref}[1]{\S\ref{#1}}
\newcommand{\propref}[1]{Proposition~\ref{#1}}
\newcommand{\lemref}[1]{Lemma~\ref{#1}}
\newcommand{\exref}[1]{Example~\ref{#1}}
\newcommand{\adref}[1]{Addendum~\ref{#1}}
\newcommand{\colim}{\operatorname*{colim}}
\newcommand{\Hom}{\operatorname{Hom}}
\newcommand{\coker}{\operatorname{coker}}
\newcommand{\A}{{\mathcal  A}}
\newcommand{\C}{{\mathcal  C}}
\newcommand{\Ho}{{\mathcal  H}}
\newcommand{\M}{{\mathcal  M}}
\newcommand{\F}{{\mathbb  F}}
\newcommand{\Z}{{\mathbb  Z}}
\newcommand{\Q}{{\mathbb  Q}}
\newcommand{\n}{\bold n}
\newcommand{\ra}{\rightarrow}
\newcommand{\xra}{\xrightarrow}
\newcommand{\la}{\leftarrow}
\newcommand{\hra}{\hookrightarrow}
\newcommand{\ora}{\twoheadrightarrow}
\begin{document}

\title[Hopf Algebras]{Split Hopf algebras, quasi-shuffle algebras, and the cohomology of $\Omega \Sigma X$}

\author[Kuhn]{Nicholas J.~Kuhn}
\address{Department of Mathematics \\ University of Virginia \\ Charlottesville, VA 22904}
\email{njk4x@virginia.edu}

\date{July 9, 2019.}

\subjclass[2010]{Primary 57T05, 16T05; Secondary 55P35, 05E05}

\begin{abstract}

Let $A$ and $B$ be two connected graded commutative $k$--algebras of finite type, where $k$ is a perfect field of positive characteristic $p$.  We prove that the quasi--shuffle algebras generated by $A$ and $B$ are isomorphic as Hopf algebras if and only if $A$ and $B$ are isomorphic as graded $k$--vector spaces equipped with a Frobenius ($p^{th}$--power) map.

For the hardest part of this analysis, we work with the dual construction, and are led to study connected graded cocommutative Hopf algebras $H$ with two additional properties: $H$ is free as an associative algebra, and the projection onto the indecomposables is split as a morphism of graded $k$--vector spaces equipped with a Verschiebung map.

Building on work on non--commutative Witt vectors by Goerss, Lannes, and Morel, we classify such free, `split' Hopf algebras.

A topological consequence is that, if $X$ is a based path connected space, then the Hopf algebra $H^*(\Omega \Sigma X;k)$ is determined by the stable homotopy type of $X$.

We also discuss the much easier analogous characteristic 0 results, and give a characterization of when our quasi--shuffle algebras are polynomial, generalizing the so--called Ditters conjecture.
\end{abstract}

\maketitle
\section{Introduction} \label{introduction}

\subsection{An overview}

Let $k$ be a perfect field of characteristic $p$.  We study the functor sending a connected graded commutative finite type $k$--algebra $A$ to the quasi--shuffle algebra $J^{\vee}(A)$, the cofree Hopf algebra cogenerated by $A$.  This is of interest in algebraic topology since if $A = H^*(X;k)$, then $J^{\vee}(A) = H^*(\Omega \Sigma X;k)$.

By forgetting structure, $A$ is an $F$--module: a non-negatively graded $k$--vector space equipped with a Frobenius ($p^{th}$--power) map.  In \thmref{char p theorem}, we prove that $J^{\vee}(A)$ is isomorphic to $J^{\vee}(B)$ as Hopf algebras if and only if $A$ is isomorphic to $B$ as $F$--modules.

The result just stated overlaps with results in \cite{newman radford}, but our route to proving this is very different than that taken by Newman and Radford, and yields new understanding about the structure of these Hopf algebras.   For the hardest part of our analysis, we work with the dual construction, $J(C)$, the free Hopf algebra generated by a cocommutative coalgebra $C$.  When $H = J(C)$, $H$ is an example of a graded connected Hopf algebra $H$ over $k$ with two additional properties:
\begin{itemize}
\item $H$ is free as an algebra.
\item $\bar H \ra QH$ is split in the category $V$--modules.
\end{itemize}
Here a $V$--module is a non-negatively graded $k$--vector space equipped with a Verschiebung map, $\bar H$ is the positive part of $H$ (and the kernel of the counit), and $QH = \bar H /\bar H^2$ is the module of indecomposables.

Building on work on non--commutative Witt vectors by Goerss, Lannes, and Morel \cite{glm92}, we classify such free, `split' Hopf algebras, and prove that they have a certain lifting property.

A topological consequence is that, if $X$ is a connected space, then the Hopf algebra $H^*(\Omega \Sigma X;k)$ is determined by the stable homotopy type of $X$.

Other results discussed in the paper, dependent only on classic Hopf algebra theory, include the characteristic 0 analogues of the above results, and a characterization of when quasi--shuffle algebras are polynomial, generalizing the so--called Ditters conjecture about the ring of quasisymmetric functions.

In the rest of this introduction, we describe our results with more precision and detail.

\subsection{Some categories and two functors}

Let $k$ be a field, and let $\M(k)$ be category of non-negatively graded $k$--vector spaces of finite type: $M = \{M_n\ | \ n = 0,1,2,\dots\}$ with $M_n$ finite dimensional for all $n$.  We let $\bar M$ be the positive part of $M$, and we say that $M$ is {\em reduced} if $M_0 = 0$.

We will be studying objects in the following categories.
\begin{itemize}
\item $\A^*(k)$: connected, commutative algebras in $\M(k)$.
\item $\C_*(k)$: connected, cocommutative coalgebras in $\M(k)$.
\item $\Ho^*(k)$: connected, commutative  Hopf algebras in $\M(k)$.
\item $\Ho_*(k)$: connected, cocommutative Hopf algebras in $\M(k)$.
\end{itemize}

\begin{defn} Let $J: \C_*(k) \ra \Ho_*(k)$ be left adjoint to the forgetful functor.
\end{defn}
Explicitly, $J(C)$ will be the tensor algebra $T(C) = \bigoplus_{k=0}^{\infty} \bar C^{\otimes k}$ as an algebra, with coproduct induced by the coproduct $\Delta: C \ra C \otimes C$.  See \secref{quasi shuffle sec} for more detail.

The dual definition goes as follows.

\begin{defn} Let $J^{\vee}: \A^*(k) \ra \Ho^*(k)$ be right adjoint to the forgetful functor.
\end{defn}
Explicitly, $J^{\vee}(A)$ will be the cotensor coalgebra $T(A) = \bigoplus_{k=0}^{\infty} \bar A^{\otimes k}$ as a coalgebra, with product induced by the product $\nabla: A \otimes A \ra A$.

The coalgebra structure on $J^{\vee}(A)$ is easily understood: it is just the deconcatenation of tensors.  The product on $J^{\vee}(A)$ is a bit more complicated.  It seems to have been  first studied by K.~Newman and D.~Radford in the 1970's \cite{newman radford}, and then  rediscovered decades later by various authors, including J.-L.~Loday \cite{loday}, who called it the {\em quasi--shuffle product}.  We give our own quick exposition of this product in \secref{quasi shuffle sec}.

\begin{ex} \label{ex 1} Let $A = k[t]$, a polynomial algebra with generator $t$ in grading 2.  Then $J^{\vee}(A) = QSym(k)$, the much studied Hopf algebra of {\em quasi--symmetric functions} (over the field $k$). (See \cite{mason} for a recent survey.)

Dually, let $C$ have basis given by elements $t_k$, $k \geq 0$, of degree $2k$ with coproduct $\Delta(t_k) = \sum_{i+j=k} t_i \otimes t_j$.  Then $J(C)=NSym(k)$, the Hopf algebra of {\em non-symmetric functions} (over $k$). As an algebra, $J(C) = T\langle t_1,t_2, \dots\rangle$, the free associative algebra generated by $t_k$ for $k\geq 1$ ($t_0 = 1$).

The classic ring of symmetric functions $Sym(k)= k[t_1,t_2,\dots]$ is a self dual Hopf algebra, and there is a projection of Hopf algebras
$ NSym(k) \twoheadrightarrow Sym(k)$,
and a dual embedding of Hopf algebras
$ Sym(k) \hookrightarrow QSym(k)$.
\end{ex}

\subsection{The functors topologically realized}

If $X$ is a based topological space, let $\Sigma X$ be its reduced suspension, and $\Omega X$ be the space of based loops in $X$.  The functor $\Sigma$ is left adjoint to $\Omega$, and thus $\Omega \Sigma X$ admits an interpretation as the free based loopspace generated by $X$.

Classic results of Ioan James from the 1950's \cite{james} imply that if $X$ is a connected CW complex of finite type then the natural map $X \ra \Omega \Sigma X$ induces an isomorphism of Hopf algebras
$$ J(H_*(X;k)) \xra{\sim} H_*(\Omega \Sigma X;k).$$
Dually, one obtains natural isomorphisms of Hopf algebras
$$  H^*(\Omega \Sigma X;k) \xra{\sim} J^{\vee}(H^*(X;k)).$$

\begin{ex}  As observed by Andrew Baker and Birgit Richter in \cite{baker richter}, \exref{ex 1} can be topologically realized.   There are Hopf algebra isomorphisms $Sym(k) \simeq H^*(BU;k) \simeq H_*(BU;k)$, $ NSym(k) \simeq H_*(\Omega \Sigma \mathbb C \mathbb P^{\infty}; k)$,
and $ QSym(k) \simeq H^*(\Omega \Sigma \mathbb C \mathbb P^{\infty}; k)$. (Here $ U = \bigcup_n U(n)$ is the infinite unitary group, and $BU$ is its classifying space.)

Since $BU$ has the structure of a loopspace, the map $\mathbb C \mathbb P^{\infty} = BU(1) \ra BU$ extends to a map of loopspaces
$ \Omega \Sigma \mathbb C \mathbb P^{\infty} \ra BU$.
This map then induces the Hopf algebra projection $ NSym(k) \twoheadrightarrow Sym(k)$ in homology, and the Hopf algebra embedding $ Sym(k) \hookrightarrow QSym(k)$ in cohomology.
\end{ex}

\subsection{When are two quasi-shuffle algebras isomorphic?}

Given two commutative algebras $A,B \in \A^*(k)$, one might ask when $J^{\vee}(A)$ is isomorphic to $J^{\vee}(B)$.

This question comes in various versions, as one could consider $J^{\vee}(A)$ as a Hopf algebra, or as an algebra, or as a coalgebra.

From the construction, the one evident statement here is that $J^{\vee}(A)$ is isomorphic to $J^{\vee}(B)$ as coalgebras if and only if $A$ is isomorphic to $B$ as graded $k$--vector spaces, as, in that case, $J^{\vee}(A)$ and $J^{\vee}(B)$ will be cofree coalgebras of the same size.

Classical work on Hopf algebras by Milnor and Moore (and others earlier) \cite{milnor moore} implies a definitive answer to all versions of the question, when $k$ has characteristic 0.

\begin{thm} \label{char 0 theorem}  Let $k$ be a field of characteristic 0.  Given $A, B \in \A^*(k)$, the following are equivalent. \\

\noindent{\bf (a)} $\bar A \simeq \bar B$, as graded $k$--vector spaces. \\

\noindent{\bf (b)} $J^{\vee}(A) \simeq J^{\vee}(B)$, as algebras. \\

\noindent{\bf (c)} $J^{\vee}(A) \simeq J^{\vee}(B)$, as Hopf algebras.
\end{thm}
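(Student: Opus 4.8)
The plan is to prove the cycle of implications $(c) \Rightarrow (b) \Rightarrow (a) \Rightarrow (c)$. The implication $(c) \Rightarrow (b)$ is immediate, since a Hopf algebra isomorphism is in particular an isomorphism of algebras. For $(b) \Rightarrow (a)$ I would argue purely at the level of Poincar\'e series, and observe that this step uses neither the algebra structure nor the hypothesis on $k$. As a graded vector space $J^{\vee}(A) = \bigoplus_{n \geq 0} \bar A^{\otimes n}$, so its Poincar\'e series is $(1 - P_{\bar A}(q))^{-1}$, where $P_{\bar A}(q)$ denotes the Poincar\'e series of $\bar A$; the geometric expansion is legitimate because $A$ is connected, so $\bar A_0 = 0$. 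Any isomorphism $J^{\vee}(A) \simeq J^{\vee}(B)$ of algebras (indeed of graded vector spaces) forces $P_{\bar A} = P_{\bar B}$, and since both spaces are of finite type this gives $\bar A \simeq \bar B$. The same computation also reproves the coalgebra statement noted before the theorem.

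All the real content is in $(a) \Rightarrow (c)$, and it is here that characteristic $0$ enters, through the Milnor--Moore theorem \cite{milnor moore}. I would first pass to the graded dual. Since $A$ is of finite type, $C := A^{\vee}$ lies in $\C_*(k)$, and by construction $J^{\vee}(A)^{\vee} \simeq J(C)$: the cotensor coalgebra with the quasi--shuffle product dualizes to the tensor algebra $T(\bar C)$ equipped with the coproduct induced by $\Delta_C$. It therefore suffices to show that, up to isomorphism of Hopf algebras, $J(C)$ depends only on $\bar C$ as a graded vector space. Now $J(C)$ is connected and cocommutative and, crucially, is \emph{free as an associative algebra}, being the tensor algebra on $\bar C$. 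By Milnor--Moore, $J(C) \simeq U(PJ(C))$, where $PJ(C)$ is the graded Lie algebra of primitives; and because $U(PJ(C))$ is a free associative algebra, $PJ(C)$ must be a free graded Lie algebra. The generating space of this free Lie algebra is recovered as its abelianization, which agrees with the module of algebra indecomposables $Q\,U(PJ(C)) = QJ(C) = \bar C$ (since the bracket $xy-yx$ is decomposable). Hence $PJ(C) \simeq L(\bar C)$, the free graded Lie algebra on $\bar C$, and so $J(C) \simeq U(L(\bar C))$ as Hopf algebras, an isomorphism type manifestly determined by $\bar C$ alone. Given $\bar A \simeq \bar B$, this yields a Hopf algebra isomorphism $J(A^{\vee}) \simeq J(B^{\vee})$, and dualizing once more produces the desired $J^{\vee}(A) \simeq J^{\vee}(B)$.

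The one genuinely non-formal input, and the step I expect to be the main obstacle, is the assertion that $U(\mathfrak g)$ being free as an associative algebra forces $\mathfrak g$ to be a free graded Lie algebra; I would isolate this as the crux and cite the classical characterization (free associative algebras are hereditary, so the corresponding Lie algebra has cohomological dimension one and is free), rather than rederive it. The remaining ingredients are routine but demand care in the graded setting: the Milnor--Moore theorem and the equivalence ``free associative $\Leftrightarrow$ free Lie'' must be invoked in their graded-super form, so that the Koszul signs attached to odd-degree classes are handled correctly and $U(L(\bar C))$ is understood as the universal enveloping algebra of a \emph{graded} Lie algebra; and the duality $J^{\vee}(A)^{\vee} \simeq J(A^{\vee})$, together with the identification of $Q\,U(\mathfrak g)$ with $\mathfrak g^{\mathrm{ab}}$, should be verified on the explicit tensor descriptions, where the finite type hypothesis guarantees that double dualization returns the original Hopf algebra.
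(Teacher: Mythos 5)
Your proposal is correct, and its first two implications coincide with the paper's proof: (c)$\Rightarrow$(b) is immediate, and (b)$\Rightarrow$(a) is the identical Poincar\'e series computation $\chi_{J^{\vee}(A)}(t)=1/(1-\chi_{\bar A}(t))$. Where you genuinely diverge is (a)$\Rightarrow$(c). The paper stays on the commutative side and produces an explicit Hopf isomorphism $J^{\vee}(A)\simeq J^{\vee}(A_{triv})$: since $PJ^{\vee}(A)=\bar A$ (deconcatenation coalgebra) and, by \cite[Proposition 4.17]{milnor moore}, $P\to Q$ is monic for connected commutative Hopf algebras in characteristic $0$, a linear retraction $q\colon QJ^{\vee}(A)\to\bar A$ yields an algebra map $J^{\vee}(A)\to A_{triv}$, hence by the adjunction defining $J^{\vee}$ a Hopf map $J^{\vee}(A)\to J^{\vee}(A_{triv})$ that is the identity on primitives and therefore an isomorphism of cofree coalgebras. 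You instead dualize and invoke the full Milnor--Moore structure theorem $J(C)\simeq U(PJ(C))$ together with the theorem that a connected graded Lie algebra whose enveloping algebra is free associative (hence hereditary) is itself free, concluding $J(C)\simeq U(L(\bar C))$. These two normal forms are graded duals of one another ($U(L(\bar C))$ is exactly $J(C_{triv})$, the dual of $J^{\vee}(A_{triv})$), so both proofs establish the same fact; the trade-off is that the paper's argument is formally light, needing only the injectivity of $P\to Q$ and (co)freeness, while yours imports the ``free associative $\Rightarrow$ free Lie'' theorem, which you rightly isolate as the crux. Two remarks on that crux: the implication ``cohomological dimension one $\Rightarrow$ free'' is genuinely delicate for ungraded Lie algebras, so the citation must be to the connected graded, characteristic-$0$ form (as you anticipate); and it can in fact be bypassed entirely, since Milnor--Moore gives $\overline{J(C)}=PJ(C)+\overline{J(C)}^{\,2}$, so a linear section of $PJ(C)\to QJ(C)=\bar C$ induces a Hopf algebra map $T(\bar C)=U(L(\bar C))\to J(C)$ with primitive generators that is an isomorphism on indecomposables, hence an isomorphism of free algebras by \lemref{tensor lem}. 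A small dividend of your route is that it identifies $PJ(C)$ as the free graded Lie algebra on $\bar C$, a structural statement the paper's proof never makes explicit.
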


To explain the characteristic $p$ version of this, we need a definition.

\begin{defn} \label{F module defn} Let $k$ be a perfect field of characteristic $p$.  An $F$--module is a reduced graded $k$--module $M \in \M(k)$ equipped with Frobenius maps:
$ F: M_n \ra M_{pn}$, for all $n$ if $p=2$ and for even $n$ if $p$ is odd, such that $F(x+y) = F(x) + F(y)$ and $F(\lambda x) = \lambda^pF(x)$ for all $\lambda \in k$.  A morphism between two $F$--modules is a morphism in $\M(k)$ commuting with the Frobenius maps.  The $F$--modules then form an abelian category  which we denote by $F$--$\M(k)$.
\end{defn}

\begin{ex} By forgetting structure, the augmentation ideal $\bar A$ of a commutative algebra $A \in \A^*(k)$ can be viewed as an $F$--module, with $ F: \bar A_n \ra \bar A_{pn}$ defined by $F(x) = x^p$.
\end{ex}

\begin{thm} \label{char p theorem}  Let $k$ be a perfect field of characteristic $p>0$.  Given $A, B \in \A^*(k)$, the following are equivalent. \\

\noindent{\bf (a)} $\bar A \simeq \bar B$, as $F$--modules. \\

\noindent{\bf (b)} $J^{\vee}(A) \simeq J^{\vee}(B)$, as algebras. \\

\noindent{\bf (c)} $J^{\vee}(A) \simeq J^{\vee}(B)$, as Hopf algebras.
\end{thm}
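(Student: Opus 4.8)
The plan is to deduce all three equivalences from a single duality reduction together with a classification of free, `split' Hopf algebras. Since $k$ is perfect and everything is of finite type, graded linear duality is an exact, involutive equivalence between $\Ho^*(k)$ and $\Ho_*(k)$ that carries $J^{\vee}(A)$ to $J(A^{\vee})$, algebra maps to coalgebra maps, Hopf maps to Hopf maps, and the Frobenius on $\bar A$ to the Verschiebung on $\overline{A^{\vee}}$. So I would first replace (a), (b), (c) by their dual statements: writing $C = A^{\vee}$ and $D = B^{\vee}$, it suffices to show that $\bar C \simeq \bar D$ as $V$--modules, $J(C) \simeq J(D)$ as coalgebras, and $J(C) \simeq J(D)$ as Hopf algebras are all equivalent. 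The whole point of passing to this side is that $J(C)$ is \emph{free} as an algebra, so that the indecomposables $QJ(C)$ are transparently $\bar C$; by contrast the quasi--shuffle algebra $J^{\vee}(A)$ has complicated indecomposables, and reading off the $F$--module structure directly from its algebra structure is awkward.

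Granting this reduction, the first task is to pin down the $V$--module structure. I would verify that $J(C)$ is a free, split Hopf algebra in the sense of the introduction, and that the Hopf--algebra Verschiebung descends along $\bar H \ra QH$ to equip $QJ(C) = \bar C$ with exactly its natural $V$--module structure (the one dual to the Frobenius $x \mapsto x^p$ on $\bar A$). The implication (c)$\Rightarrow$(b) is then trivial, and the content of (a)$\Leftrightarrow$(c) becomes the statement that a free, split Hopf algebra is determined, up to Hopf--algebra isomorphism, by the $V$--module $QJ(C)$, with every isomorphism of these $V$--modules lifting to a Hopf--algebra isomorphism. I would prove this lifting property by classifying free, split Hopf algebras: one chooses algebra generators compatible with the Verschiebung and transports them across a given $V$--module isomorphism, using the non--commutative Witt vector calculus of Goerss, Lannes, and Morel \cite{glm92} to organize the choices so that the coproduct, and not merely the product, is matched.

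It remains to close the loop with the one implication that sees only part of the structure, namely that a coalgebra isomorphism $J(C) \simeq J(D)$ already forces $\bar C \simeq \bar D$ as $V$--modules. The obstacle here is that a coalgebra isomorphism need not respect the algebra structure, so it cannot be fed directly into the indecomposables. I would instead extract the $V$--module from coalgebra--intrinsic data: the classification should show that, for a free split Hopf algebra, the underlying coalgebra alone determines $QJ(C)$ together with its Verschiebung, so that the mere existence of a coalgebra isomorphism produces the required $V$--module isomorphism. Combined with the previous paragraph this yields the cycle (b)$\Rightarrow$(a)$\Rightarrow$(c)$\Rightarrow$(b).

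The main obstacle is the lifting property, that is, promoting an isomorphism of the $V$--modules of indecomposables to a genuine Hopf--algebra isomorphism. Matching the product is easy on free algebras, but matching the coproduct as well, compatibly with the Verschiebung and in a way insensitive to the many possible choices of generators, is precisely where the Witt vector machinery of \cite{glm92} enters; this is the technical core on which both nontrivial implications rest.
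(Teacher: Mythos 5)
Your duality reduction and your handling of the hard implication are exactly the paper's route: one passes to \thmref{char p theorem dualized}, observes (\lemref{JC is split lem}) that $J(C)$ is split and free with $QJ(C) \simeq \bar C$ as $V$--modules, and then the classification and lifting theorems for split free Hopf algebras (\thmref{split thm}, \thmref{lifting thm}, built on \cite{glm92}) give $J(C) \simeq H(\bar C)$ as Hopf algebras, so that a $V$--module isomorphism $\bar C \simeq \bar D$ yields a Hopf algebra isomorphism $J(C) \simeq J(D)$. That part of your proposal is sound, and correctly identifies the lifting property as the technical core.

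The gap is in your implication (b) $\Rightarrow$ (a). You assert that ``the classification should show that, for a free split Hopf algebra, the underlying coalgebra alone determines $QJ(C)$ together with its Verschiebung.'' The classification (\thmref{classification thm}) does not show this: it determines a split free Hopf algebra, \emph{as a Hopf algebra}, from the $V$--module $QH$, and says nothing about which invariants survive when the multiplication is forgotten --- note that $QH = \bar H/\bar H^2$ is itself defined by the algebra structure, and \exref{bad example} shows how badly the coalgebra can fail to see $QH$ in the non-split case. What a coalgebra isomorphism $J(C) \simeq J(D)$ actually hands you is an isomorphism of \emph{underlying} $V$--modules $T(\bar C) \simeq T(\bar D)$, since the Verschiebung is defined from the coproduct alone; the missing step is the cancellation statement that an isomorphism $T(N) \simeq T(N')$ of $V$--modules forces $N \simeq N'$. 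This is precisely the second lemma of \secref{easy proofs sec} (stated there for $F$--modules: $N$ is determined by the $F$--module $TN$), and the paper proves it not from any Hopf--algebraic classification but by a separate Poincar\'e series argument, decomposing $N$ into the indecomposables $N(n,j)$ and comparing the series of cokernels of iterated Frobenius. Without this counting argument (or a substitute), your cycle (b) $\Rightarrow$ (a) $\Rightarrow$ (c) $\Rightarrow$ (b) does not close. A further small difference, harmless to your logic: the paper also gives a direct proof of (a) $\Rightarrow$ (b) via the Borel/Milnor--Moore structure theory (\lemref{alg structure lem}), whereas you obtain that link by composing (a) $\Rightarrow$ (c) $\Rightarrow$ (b), which is fine.
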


Though this statement is a nice characteristic $p$ analogue of \thmref{char 0 theorem}, the {\em proof} of the most interesting implication here is intrinsically {\em not} analogous.  Implication (c) $\Rightarrow$ (b) is clear. Classic results about graded Hopf algebras \cite{milnor moore} show that (b) is equivalent to the statement that $J^{\vee}(A) \simeq J^{\vee}(B)$ as $F$--modules, and a bit of bookkeeping shows that this is equivalent to (a).  But we will see that proving the last implication, (b) $\Rightarrow$ (c) (or (a) $\Rightarrow$ (c)), leads us to a new classification result.

Note that, when (b) holds, not only will $J^{\vee}(A) \simeq J^{\vee}(B)$ be isomorphic as algebras, but also as as coalgebras, as they will be cofree coalgebras of the same size.  As we soon relate, we have an example that reveals there is still work to be done to conclude that $J^{\vee}(A) \simeq J^{\vee}(B)$ will be isomorphic as Hopf algebras.

\begin{rem}  The 1979 paper \cite{newman radford} has results overlapping with \thmref{char p theorem}. In particular, \cite[Corollary 3.8(a)]{newman radford} is (roughly)\footnote{\cite[Corollary 3.8(a)]{newman radford} has as a hypothesis that $F^n(\bar A)=0$ for some $n$, but \cite[Proposition 3.7]{newman radford} would apply if one just had that all elements in $\bar A$ are nilpotent.} our implication (a) $\Leftrightarrow$ (c), under the side hypothesis that every element $x \in \bar A$ is nilpotent.
\end{rem}

\subsection{The dual formulation of \thmref{char p theorem}}

The theorems in the last subsection clearly are equivalent to dual versions.  We leave the formulation of the dual form of \thmref{char 0 theorem} to the reader, but it will be useful to explicitly discuss the dual version of \thmref{char p theorem}.

\begin{defn} \label{V module defn} Let $k$ be a perfect field of characteristic $p$.  A $V$--module is a reduced graded $k$--module $M \in \M(k)$ equipped with Verschiebung maps:
$ V: M_{pn} \ra M_{n}$, for all $n$ if $p=2$ and for even $n$ if $p$ is odd, such that $V(x+y) = V(x) + V(y)$ and $V(\lambda x) = \lambda^{1/p}V(x)$ for all $\lambda \in k$.  As before, $V$--modules form an abelian category, which we denote $V$--$\M(k)$.
\end{defn}

\begin{ex} By forgetting structure, the positive part, $\bar C$, of a cocommutative coalgebra $C \in \C_*(k)$ can be viewed as an $V$--module.
\end{ex}

\begin{thm} \label{char p theorem dualized}  Let $k$ be a perfect field of characteristic $p>0$.  Given $C, D \in \C_*(k)$, the following are equivalent. \\

\noindent{\bf (a)} $\bar C \simeq \bar D$, as $V$--modules. \\

\noindent{\bf (b)} $J(C) \simeq J(D)$, as coalgebras. \\

\noindent{\bf (c)} $J(C) \simeq J(D)$, as Hopf algebras.
\end{thm}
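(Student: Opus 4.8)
My plan is to establish the cycle (c) $\Rightarrow$ (b) $\Rightarrow$ (a) $\Rightarrow$ (c), with only the last implication carrying real content. The implication (c) $\Rightarrow$ (b) is immediate, since a Hopf isomorphism forgets to a coalgebra isomorphism. For (b) $\Rightarrow$ (a) I would pass to graded $k$--linear duals: on finite type objects $(-)^{\vee}$ is an anti-equivalence of $\M(k)$ carrying $\C_*(k)$ to $\A^*(k)$, sending the $V$--module $\bar C$ to the $F$--module $(\bar C)^{\vee}$, and intertwining $J$ with $J^{\vee}$ (duality exchanges the two defining adjunctions). A coalgebra isomorphism $J(C) \simeq J(D)$ thus dualizes to an algebra isomorphism $J^{\vee}(C^{\vee}) \simeq J^{\vee}(D^{\vee})$, whence $(\bar C)^{\vee} \simeq (\bar D)^{\vee}$ as $F$--modules by the elementary implication (b) $\Rightarrow$ (a) of \thmref{char p theorem}, and so $\bar C \simeq \bar D$ as $V$--modules. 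The same duality shows this theorem is formally equivalent to \thmref{char p theorem}; I nonetheless prove the decisive implication directly in the present cocommutative, free--as--an--algebra setting, since that is where the relevant structure is visible (and where \thmref{char p theorem} must in any case ultimately be proved).

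For (a) $\Rightarrow$ (c), the first observation is that $J(C)$ is a \emph{free split Hopf algebra} in the sense of the introduction: it is free as an algebra, being the tensor algebra $T(\bar C)$, and the degree--one inclusion $\bar C \hookrightarrow \overline{J(C)}$ is a $V$--module splitting of the projection $\overline{J(C)} \to Q J(C)$, under which $Q J(C) \cong \bar C$ as $V$--modules. Now suppose $\phi\colon \bar C \to \bar D$ is an isomorphism of $V$--modules. Because $J(C)$ is free as an algebra, the composite $\bar C \xra{\phi} \bar D \hookrightarrow J(D)$ extends uniquely to an algebra map $\Phi\colon J(C) \to J(D)$; building $\Psi$ from $\phi^{-1}$ symmetrically, one sees $\Phi$ induces $\phi$ on indecomposables and hence is an isomorphism of algebras. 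The entire difficulty is to arrange that $\Phi$ can be chosen to respect the coproducts as well, i.e.\ to be a map of Hopf algebras.

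This is precisely the point at which classical Milnor--Moore/Borel theory is insufficient: a connected free algebra admits many coproducts making it a cocommutative Hopf algebra, and the $V$--module $Q J(C)$ alone does not determine the coproduct. Here I would invoke the non--commutative Witt vector machinery of Goerss--Lannes--Morel \cite{glm92}. The idea is to use the splitting to manufacture a system of algebra generators adapted to the Verschiebung --- replacing each generator by a $p$--typical tower under iterated $V$ --- and to show, via universal (Witt) formulas, that the coproduct on $J(C)$ is \emph{determined} on such a system by the $V$--module structure. A $V$--linear isomorphism $\phi$ then carries one such adapted generating system to another, forcing the coproducts to agree, so that the induced $\Phi$ is automatically a coalgebra map and therefore the desired Hopf isomorphism. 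I expect this to be packaged as a classification: the functor $H \mapsto QH$ is an equivalence from free split Hopf algebras to $V$--modules, together with a lifting property for isomorphisms.

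The main obstacle is exactly this classification/lifting statement, and the subtlety is genuinely characteristic--$p$: without the splitting hypothesis the Hopf structure on a free algebra is not recovered from its indecomposables, so the proof must extract real mileage from the $V$--linear splitting. Concretely I expect the work to lie in (i) producing the Witt--adapted generators functorially from a splitting, controlling the interaction of the $V$--action with the algebra filtration, and (ii) verifying that the coproduct is given on these generators by the GLM non--commutative Witt formulas, so that matching $V$--module data yields matching coproducts. Establishing these is the technical heart of the argument, and is what forces the detour through \cite{glm92} rather than a direct appeal to the structure theory that suffices in characteristic $0$.
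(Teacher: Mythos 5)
Your proposal is correct and follows essentially the same route as the paper: the easy implications are handled just as the paper handles them (duality plus the elementary Poincar\'e--series/bookkeeping argument for (b) $\Leftrightarrow$ (a)), and the hard implication (a) $\Rightarrow$ (c) is reduced, via the observation that $J(C)$ is split and free as an algebra with $QJ(C) \simeq \bar C$ (the paper's \lemref{JC is split lem}), to a classification of split free Hopf algebras by their $V$--module of indecomposables built on Goerss--Lannes--Morel --- which is exactly the paper's \thmref{split thm}, proved there via \thmref{lifting thm}. The only divergence is in how you envision the classification being established (universal Witt formulas on $V$--adapted generators), whereas the paper deduces uniqueness from the lifting theorem together with \lemref{tensor lem} (a map of free algebras inducing an isomorphism on indecomposables is an isomorphism); but this concerns the proof of \thmref{split thm}, not the deduction of the present theorem from it, which is identical.
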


As before, implication $(c) \Rightarrow (b)$ is clear, and classic Hopf algebra theory, together with some bookkeeping, shows that $(b) \Leftrightarrow (a)$.  Note that when $(b)$ holds, $J(C)$ and $J(D)$ will be isomorphic free algebras that are also isomorphic as coalgebras.

\subsection{A cautionary example, topologically realized}

\begin{ex}  \label{bad example}  Working over the field $k=\F_2$,  we consider the tensor algebra $T(x,y,z)$, with $|x|=1$, $|y|=2$, and $|z|=4$, made into a Hopf algebra in two different ways: call these $H_1$ and $H_2$.  In both of these, $x$ and $y$ are primitive, i.e. $\Delta(x) = x \otimes 1 + 1 \otimes x$ and $\Delta(y) = y \otimes 1 + 1 \otimes y$.  In $H_1$, we let $\Delta(z) = z\otimes 1 + y \otimes y + 1 \otimes z$, and in $H_2$, we let $\Delta(z) = z\otimes 1 + x^2 \otimes x^2 + 1 \otimes z$.  Then $H_1$ and $H_2$ are isomorphic as both algebras and coalgebras, but not as Hopf algebras.

These Hopf algebras can be topologically realized:
$$H_1 \simeq H_*(\Omega \Sigma (S^1\vee \mathbb C\mathbb P^2);\F_2) \text{ \  and \  }
H_2 \simeq H_*(\Omega (S^3\vee C(\eta \circ \Sigma \eta));\F_2).$$
Here $\eta: S^3 \ra S^2$ is the Hopf map, and $C(\eta \circ \Sigma \eta)$ is the mapping cone of the composite $S^4 \xra{\Sigma \eta} S^3 \xra{\eta} S^2$.
\end{ex}

We will supply details in \S \ref{bad example sec}.

\begin{rem}  The Hopf algebras here are noncommutative versions of those appearing in \cite[Example 2.3]{klw}.  It was clear that $H_1$ could be topologically realized, as it is $J(C)$, where $C=H_*(S^1 \vee \mathbb C \mathbb P^2;\F_2)$; it was amusing to realize that $H_2$ could be also.
\end{rem}

\subsection{Indecomposables and split Hopf algebras}

To prove the implication (b) $\Rightarrow$ (c) (or (a) $\Rightarrow$ (c)) in \thmref{char p theorem dualized}, we need to use that the Hopf algebras we are considering -- Hopf algebras of the form $J(C)$ -- have some structure not present in Hopf algebras like $H_2$ in \exref{bad example}.

To describe this, we remind the reader of some standard terminology and notation.  If $H$ is a graded Hopf algebra, $PH \subset \bar H$ is the module of primitives, and $QH = \bar H/\bar H^2$ is the module of indecomposables.

\begin{lem}  Let $k$ be a perfect field of characteristic $p>0$.  If $H \in \Ho^*(k)$ then $PH \subset \bar H$ is a sub--$F$--module.  If $H \in \Ho_*(k)$ then $QH$ is a quotient of $\bar H$ as a $V$--module.
\end{lem}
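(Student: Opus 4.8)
The plan is to establish the first assertion by a direct $p$th-power computation and then obtain the second from it by $k$--linear duality, using throughout that every object of $\M(k)$ is of finite type. For the first assertion, let $H \in \Ho^*(k)$ and let $x \in PH$ be homogeneous. When $p$ is odd the Frobenius is defined only in even degrees, so we may assume $|x|$ is even; when $p=2$ all signs are trivial. In either case $x \otimes 1$ and $1 \otimes x$ commute in the graded tensor product $H \otimes H$, since $(1 \otimes x)(x \otimes 1) = (-1)^{|x|^2}(x \otimes 1)(1 \otimes x)$ and $|x|^2$ is even. Because $\Delta \colon H \to H \otimes H$ is a morphism of algebras and $x$ is primitive,
$$\Delta(x^p) = \Delta(x)^p = (x \otimes 1 + 1 \otimes x)^p = x^p \otimes 1 + 1 \otimes x^p,$$
the last equality being the characteristic $p$ ``freshman's dream'' for the commuting elements $x \otimes 1$ and $1 \otimes x$, valid since $\binom{p}{i} \equiv 0 \pmod p$ for $0 < i < p$. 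Hence $F(x) = x^p$ is again primitive, so $F(PH) \subseteq PH$ and $PH$ is a sub--$F$--module of $\bar H$.

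For the second assertion, let $H \in \Ho_*(k)$. As $H$ is of finite type, its graded dual $H^{\vee}$ is a connected \emph{commutative} Hopf algebra, so $H^{\vee} \in \Ho^*(k)$, and $k$--linear duality interchanges the abelian categories $F$--$\M(k)$ and $V$--$\M(k)$, carrying the $V$--module $\bar H$ to the $F$--module $\bar{H^{\vee}}$. Dualizing the multiplication $\bar H \otimes \bar H \to \bar H$, whose cokernel is $QH$, yields the reduced comultiplication of $H^{\vee}$, whose kernel is $P(H^{\vee})$; thus under the duality $(QH)^{\vee} \cong P(H^{\vee})$, and the canonical projection $\bar H \to QH$ of $V$--modules corresponds to the inclusion $P(H^{\vee}) \hookrightarrow \bar{H^{\vee}}$ of $F$--modules. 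The first assertion, applied to $H^{\vee}$, shows the latter is a morphism of $F$--modules; transposing, the projection $\bar H \twoheadrightarrow QH$ is a morphism of $V$--modules, so $QH$ is a quotient of $\bar H$ as a $V$--module.

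I expect the only points requiring genuine care to be bookkeeping: first, the graded-sign verification that $x \otimes 1$ and $1 \otimes x$ commute (and the reason the Frobenius is restricted to even degrees when $p$ is odd, namely that $x^2 = 0$ there by graded commutativity), and second, setting up the duality dictionary correctly, matching indecomposables with primitives and Verschiebung with Frobenius, including the $p$ versus $1/p$ semilinear twist that the duality introduces. Once these are in place the computations are immediate. A duality-free route to the second assertion is to show directly that the decomposables $\bar H^2$ form a sub--$V$--module, so that $QH = \bar H/\bar H^2$ inherits the quotient structure; I would verify this through the annihilator description $\bar H^2 = \operatorname{Ann}(P(H^{\vee}))$, which is the same duality input in another guise.
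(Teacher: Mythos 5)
Your proof is correct and follows exactly the paper's route: the first statement via the standard observation that the $p$th power of a primitive is again primitive (which you verify by the freshman's-dream computation, with the sign check explaining the even-degree restriction for odd $p$), and the second statement by dualizing the first, matching $QH$ with $P(H^{\vee})$ and Verschiebung with Frobenius. The paper's own proof is just a terser statement of the same two steps.
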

\begin{proof}  The first statement is just the well known observation that the $p^{th}$ power of a primitive is again primitive.  The second statement is dual to this.
\end{proof}

Our needed extra structure on our Hopf algebras is described in the next definition.

\begin{defn}  We say that $H \in \Ho_*(k)$ is {\em split} if the quotient map $\bar H \ra QH$ has a section in $V$--$\M(k)$.
\end{defn}

\begin{lem} \label{JC is split lem}  Given $C \in \C_*(k)$, $J(C)$ is canonically split.
\end{lem}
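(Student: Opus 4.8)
The plan is to write down the section explicitly and check that the single nonformal point -- compatibility with the Verschiebung -- holds. Since $J(C) = T(\bar C)$ is free as an algebra on $\bar C$, with product given by concatenation of tensors, we have $\overline{J(C)}^{\,2} = \bigoplus_{n \geq 2} \bar C^{\otimes n}$, so the projection $\pi \colon \overline{J(C)} \ora QJ(C)$ identifies $QJ(C)$ with the length--one summand $\bar C = \bar C^{\otimes 1}$. Let $s \colon QJ(C) = \bar C \hra \overline{J(C)}$ be the resulting inclusion of the length--one tensors. This is a section of $\pi$ in $\M(k)$, and it is plainly natural in $C$; it remains only to verify that $s$ lies in $V$--$\M(k)$, i.e. that it commutes with the Verschiebung.

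For this I would invoke the fact that the Verschiebung is a \emph{natural} operation on objects of $\C_*(k)$. Indeed, under graded duality the $V$--module structure on a coalgebra $C$ is the transpose of the Frobenius ($p^{th}$--power) map on the commutative algebra $C^*$; since every algebra map commutes with $p^{th}$ powers, every coalgebra map commutes with the Verschiebung. In particular, the unit of the adjunction $\iota \colon C \to J(C)$ -- which is precisely the inclusion of $C$ as the part of length $\leq 1$, and which is a morphism of coalgebras because the coproduct on $J(C)$ was defined so as to extend $\Delta_C$ on the generators -- satisfies $V_{J(C)} \circ \iota = \iota \circ V_C$.

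Restricting this identity to $\bar C$ shows that $V_{J(C)}$ carries the length--one summand $s(\bar C)$ into itself and agrees there with $V_C$; that is, $V_{J(C)} \circ s = s \circ V_C$. By the preceding lemma $\pi$ is a morphism of $V$--modules, and since $\pi \circ s = \Id$ the Verschiebung on $QJ(C)$ is identified with $V_C$ under $\pi|_{\bar C^{\otimes 1}}$. Hence the displayed identity says exactly that $s$ commutes with the Verschiebung, so $s$ is the desired splitting in $V$--$\M(k)$. It is canonical because the inclusion of length--one tensors is natural in $C$.

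The step I expect to carry the real content is the naturality of the Verschiebung, i.e. the identity $V_{J(C)} \circ \iota = \iota \circ V_C$: a priori $V_{J(C)}$ applied to a length--one element could acquire components of higher length, and it is precisely the fact that $V$ is built functorially from the comultiplication (equivalently, is dual to the $p^{th}$--power map) that rules this out. Everything else is bookkeeping with the length grading on the tensor algebra.
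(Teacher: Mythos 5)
Your proof is correct and takes essentially the same route as the paper's: both arguments hinge on the fact that the unit $\iota \colon C \to J(C)$ is a morphism of coalgebras and that the Verschiebung is natural for coalgebra maps, so that the inclusion of $\bar C$ as the length--one tensors is a morphism of $V$--modules whose composite with $\overline{J(C)} \to QJ(C)$ is a $V$--module isomorphism. The paper states this in three lines; your version merely spells out the length--grading bookkeeping and the identification of the Verschiebung on $QJ(C)$ with $V_C$, which the paper leaves implicit.
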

\begin{proof}
 The natural inclusion $C \ra J(C)$ is a morphism of coalgebras, thus $\bar C \ra \overline{J(C)}$ is a morphism of $V$--modules, and we see that the composite $\bar C \ra \overline{J(C)} \ra QJ(C)$ is an isomorphism in $V$--$\M(k)$.
\end{proof}

\begin{ex}  With $H_2$ as in \exref{bad example}, $QH_2 = \langle \bar x,\bar y, \bar z\rangle$ with zero Verschiebung.  The natural lift of $\bar z$ -- the generator $z$ -- satisfies $V(z) = x^2$, and from this it is not hard to check that $\bar H_2 \ra QH_2$ does not admit a section as $V$--modules.  Thus $H_2$ is not split.
\end{ex}

\begin{ex}  Given $H \in \Ho_*(k)$, if $QH$ is a projective object in $V$--$\M(k)$, then $H$ is clearly split.
\end{ex}

We have the following rigidity/classification theorem for split free Hopf algebras.

\begin{thm} \label{split thm}  Let $k$ be a perfect field of characteristic $p>0$. \\

\noindent{\bf (a)} Given any $V$--module $M$, there is a unique $H(M) \in \Ho_*(k)$ that is split, free as an algebra, such that $QH(M) \simeq M$ as $V$--modules.  \\

\noindent{\bf (b)}  If $M \simeq \bigoplus_i M(i)$ is a decomposition of $M$ as a direct sum of indecomposable $V$--modules, then there is decomposition $H(M) \simeq \underset{i}{*} H(M(i))$ as the coproduct (free product) of indecomposable Hopf algebras.
\end{thm}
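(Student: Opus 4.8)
The plan is to reduce both parts of \thmref{split thm} to a single lifting property for split free Hopf algebras, whose construction and verification is where the non--commutative Witt vector technology of \cite{glm92} does the real work; everything else is formal bookkeeping with the indecomposables functor $Q\colon\Ho_*(k)\to V$--$\M(k)$. First I would record the properties of $Q$ that I will use repeatedly. For connected algebras that are free, $Q$ detects isomorphisms: a morphism of Hopf algebras between two free algebras inducing an isomorphism on indecomposables is an isomorphism of algebras (induct up the grading, which is bounded below and of finite type), hence an isomorphism of Hopf algebras. Moreover $Q$ sends the free product to the direct sum, $Q(H_1 * H_2)\cong QH_1\oplus QH_2$, a free product of free algebras is free, and a section of $\bar H\ra QH$ in $V$--$\M(k)$ can be assembled from sections on free--product factors; hence a free product of split free Hopf algebras is again split and free. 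By \lemref{JC is split lem} the examples $J(C)$ are split, and these already supply the case $V=0$ (primitively generated generators) and the infinite $V$--chains via divided--power coalgebras.

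The crux is the following lifting property, which I expect to be the main obstacle. \emph{Suppose $H$ is split and free as an algebra and $H'\in\Ho_*(k)$ is arbitrary; after fixing a section $s\colon QH\to\bar H$ in $V$--$\M(k)$, every morphism of $V$--modules $QH\to QH'$ lifts to a morphism of Hopf algebras $H\to H'$.} Given $s$, the elements $s(QH)$ freely generate $H$, so a Hopf map out of $H$ is determined by the images of these generators, the only constraint being that these images carry the correct reduced coproducts. The point is that the $V$--module splitting forces those coproducts: along each $V$--chain the bottom generator is primitive, while a generator $w$ with $Vw$ equal to the preceding generator is constrained to have $\bar\Delta w$ equal to the reduction of a Witt--vector coproduct. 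Producing a coassociative, cocommutative Hopf structure realizing an arbitrary chain, and checking it enjoys the stated lifting property, is exactly the non--commutative Witt vector analysis of \cite{glm92}; I would invoke it to construct, for each indecomposable $V$--module, a split free Hopf algebra with the prescribed indecomposables and the lifting property (using that $k$ is perfect to normalise the semilinear chain maps). For a general $M$ I would then set $H(M):=\underset{i}{*}H(M(i))$ for a decomposition of $M$ into indecomposables; by the previous paragraph this is split and free with $QH(M)\cong M$, giving existence in \textbf{(a)}, and the lifting property passes to the free product by its universal mapping property.

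It remains to deduce uniqueness and \textbf{(b)}. For uniqueness, suppose $H$ and $H'$ are both split and free with $QH\cong QH'\cong M$. Fixing a section on $H$ and applying the lifting property, I would lift the isomorphism $QH\xra{\sim}QH'$ to a Hopf map $f\colon H\to H'$; since $Qf$ is an isomorphism and both algebras are free and connected, $f$ is an isomorphism by the detection principle above, proving \textbf{(a)}. For \textbf{(b)}, with $M\cong\bigoplus_i M(i)$ as above, uniqueness identifies $H(M)$ with $\underset{i}{*}H(M(i))$, and each $H(M(i))$ is an indecomposable Hopf algebra: a nontrivial free--product splitting $H(M(i))\cong H_1 * H_2$ would, upon applying $Q$, split the indecomposable $V$--module $M(i)$ as $QH_1\oplus QH_2$ with both summands nonzero, a contradiction.
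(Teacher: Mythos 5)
Your formal bookkeeping is fine and matches the paper: your detection principle is \lemref{tensor lem}, the behavior of $Q$ on free products is \propref{Q prop}(a), and your deduction of uniqueness and of part (b) from a lifting property is exactly the paper's strategy. The gap is in the crux, in two respects. First, the lifting property as you state it --- arbitrary target $H'$, no further hypothesis --- is false. Take $p=2$, $H=T(z)$ with $z$ primitive of degree $4$ (this is split and free, with $QH \simeq M(4,0)$), and let $H'$ be the Hopf algebra $H_2$ of \exref{bad example}. The $V$--module map $QH \ra QH'$ sending $z$ to $\bar z$ (both Verschiebungs are zero) admits no Hopf algebra lift: such a lift would be a primitive of $H_2$ in degree $4$ congruent to $z$ modulo decomposables, and a direct check shows the primitives of $(H_2)_4$ are spanned by $y^2$, $x^4$ and $x^2y+yx^2$ --- the summand $x^2 \otimes x^2$ of $\bar\Delta(z)$ cannot be cancelled by coproducts of decomposables. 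This is precisely why the paper's \thmref{lifting thm} carries the hypothesis that $g$ admits a $V$--module lift to $\bar H$ (automatic when the target is split, so your intended application to uniqueness survives, but the property cannot be a formal consequence of splitness and freeness of the source alone, which is what your sketch suggests).

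Second, and more seriously, you defer both the construction and the verification of the lifting property entirely to \cite{glm92}, but the results there (\thmref{glm thm}, \lemref{glm lem}) concern projective $V$--modules and projective objects of $\Ho_*(k)^u$ only. Over a perfect field the projective indecomposables are the $M(n,j)$ with $n$ odd when $p=2$ (resp.\ $M(2m,j)$ with $p\nmid m$, plus the odd one--dimensional ones, when $p$ is odd). The remaining indecomposables $M(p^in,j)$ with $i\geq 1$ --- already $M(2,0)$ at $p=2$, realized by $T(z)$ with $z$ primitive of degree $2$ --- are not projective, and nothing in \cite{glm92} constructs their split free models or proves a lifting statement for them. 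Your claim that ``the $V$--module splitting forces those coproducts'' is also not true: in $H_*(\Omega\Sigma\mathbb C\mathbb P^3;\F_2)$ the elements $y_3$ and $y_3+y_1y_2+y_1^3$ have the same image in $QH$ and the same (trivial) Verschiebung behavior, yet only the second is primitive. Closing this gap is the actual new content of the paper's \secref{split thm proof sec}: one defines $H(p^in,j) := \Phi^iH(n,j)$ via the Frobenius twist (\propref{Q prop}), and proves \thmref{lifting thm} for these models by (i) using \propref{Hopf alg kernel prop} to replace a given $V$--module lift by one landing in the Hopf algebra kernel $H[j+1]$ of $V^{j+1}\colon H \ra \Phi^{j+1}H$, and (ii) exhibiting $H(p^in,j)$ as the quotient $H(n,i+j)//H(n,i-1)$ induced by the $V$--module extension $M(n,i-1) \ra M(n,i+j) \ra M(p^in,j)$, so that the already--proved projective case applied to $M(n,i+j)$ yields a map killing $H(n,i-1)$, which therefore factors through $H(p^in,j)$. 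Without a substitute for these two steps your outline does not reach part (a) for general $M$. (Two smaller repairs: $M(n,\infty)$ is projective only in the finite type category, which the paper handles in \adref{thm addendum} by an inverse limit argument; and your uniqueness step should lift out of the constructed model $H(M)$ into the arbitrary split free $H$, rather than out of $H$, since the lifting property is only established for the models.)
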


We say more about the indecomposable $H(M)$'s in \secref{split thm proof sec}. The uniqueness in this theorem is up to Hopf algebra isomorphism, and follows from the following lifting theorem.

\begin{thm} \label{lifting thm}  Let $M \in V\text{--}\M(k)$, and $H \in \Ho_*(k)$.  Given a $V$--module morphism $g: M \ra Q(H)$ that admits a lift to $\bar H$, there exists $G: H(M) \ra H$ in $\Ho_*(k)$ such that $QG = g: M \ra QH$.
\end{thm}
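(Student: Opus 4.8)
The plan is to construct $G$ on algebra generators. By \thmref{split thm}, $H(M)$ is free as an algebra and split, so fix a section $s$ of the projection $\overline{H(M)} \to QH(M) \simeq M$ in the category of $V$--modules. Since $s$ is a $V$--module map its image is closed under the coalgebra Verschiebung, with $V(s(m)) = s(Vm)$, and maps isomorphically onto $QH(M)$; hence $s(M)$ generates $H(M)$ freely as an algebra. An algebra map $G \colon H(M) \to H$ is therefore the same as a graded linear map $s(M) \to \bar H$, and since $\Delta_H \circ G$ and $(G \otimes G)\circ \Delta_{H(M)}$ are both algebra maps out of the free algebra $H(M)$, such a $G$ is a morphism in $\Ho_*(k)$ exactly when they agree on each generator $s(m)$; the condition $QG = g$ becomes $\pi_H(G(s(m))) = g(m)$, where $\pi_H \colon \bar H \to QH$ is the projection. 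To organize the generators I would first invoke \thmref{split thm}(b): a decomposition $M \simeq \bigoplus_i M(i)$ into indecomposable $V$--modules gives $H(M) \simeq \underset{i}{*} H(M(i))$, and as the free product is the coproduct in $\Ho_*(k)$ it suffices to build each $G|_{H(M(i))}$. Since the given lift $\tilde g \colon M \to \bar H$ of $g$ in the category of $V$--modules restricts to a lift of $g|_{M(i)}$, this reduces the statement to indecomposable $M$, where the generators form a single Verschiebung chain with the explicit coproduct of \secref{split thm proof sec}.

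Next I would construct $G$ by induction up this chain (equivalently, by induction on degree). Suppose $G$ is defined on the generators $s(m')$ with $\deg m' < N$, giving a morphism in $\Ho_*(k)$ on the sub--Hopf--algebra they generate with $QG = g$ there. For $m \in M_N$ the reduced coproduct $\bar\Delta_{H(M)}(s(m))$ lies in $\bigoplus_{0<i<N}\overline{H(M)}_i \otimes \overline{H(M)}_{N-i}$, so $\delta_m := (G\otimes G)(\bar\Delta_{H(M)}(s(m)))$ is already determined; I must find $y_m \in \bar H_N$ with $\bar\Delta_H(y_m) = \delta_m$ and $\pi_H(y_m) = g(m)$, and then set $G(s(m)) = y_m$. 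Starting from the lift $\tilde g(m)$, which already has indecomposable class $g(m)$, this is equivalent to finding a \emph{decomposable} element $e_m \in (\bar H^2)_N$ with $\bar\Delta_H(e_m) = c_m$, where $c_m := \delta_m - \bar\Delta_H(\tilde g(m))$; then $y_m := \tilde g(m) + e_m$ does the job. The resulting $G$ then commutes with the Verschiebungs automatically, since $V$ is natural for morphisms of cocommutative coalgebras.

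The main obstacle is the solvability of $\bar\Delta_H(e_m) = c_m$ with $e_m$ decomposable. First, $c_m$ is a symmetric cobar cocycle: $\bar\Delta_H(\tilde g(m))$ is a genuine reduced coproduct, and $\delta_m$ is a cocycle because the coassociativity and cocommutativity of $\Delta_{H(M)}$ transport through $G$, which is already a morphism in $\Ho_*(k)$ in lower degrees. Second --- the key point --- $c_m$ must vanish in $QH \otimes QH$. The image of $\delta_m$ there is $(g \otimes g)$ of the reduced coproduct of $s(m)$ taken modulo decomposables, which by the split structure of $H(M)$ is an explicit expression in the iterated Verschiebungs of $m$; the image of $\bar\Delta_H(\tilde g(m))$ is the same expression built from the iterated Verschiebungs of $\tilde g(m)$. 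Because $g$ and $\tilde g$ are $V$--module maps these two patterns correspond, so the images agree and $c_m$ lies in $\bar H \otimes \bar H^2 + \bar H^2 \otimes \bar H$. It then remains to realize such a cocycle by a decomposable element, which I expect to carry out by descending induction along the Verschiebung chain using the concrete coproduct formulas of \secref{split thm proof sec}. This last step is where the hypothesis that $g$ lifts to $\bar H$ as a $V$--module map is essential: for the Hopf algebra $H_2$ of \exref{bad example} the class $\bar z$ admits no such lift, which is exactly why no morphism $H(QH_2) \to H_2$ in $\Ho_*(k)$ can realize the identity on indecomposables.
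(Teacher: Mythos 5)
Your reduction to indecomposable $M$ via the coproduct decomposition is fine (and matches the paper's first step), but the induction that follows has a genuine gap at its crux. The step you defer --- ``it then remains to realize such a cocycle by a decomposable element'' --- is not a routine verification; it is the entire content of the theorem, and the formal properties you establish for $c_m$ (symmetric cobar cocycle with vanishing image in $QH\otimes QH$) do \emph{not} imply solvability of $\bar\Delta_H(e_m)=c_m$. Concretely, take $H=\F_2[x]\in\Ho_*(\F_2)$ with $x$ primitive of degree $1$: the class $c=x^2\otimes x^2$ is a symmetric cocycle (as $x^2$ is primitive) whose image in $QH\otimes QH$ is zero, yet $c$ is not the reduced coproduct of \emph{any} element, decomposable or not, since $\bar H_4=\langle x^4\rangle$ and $\bar\Delta(x^4)=0$. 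So nonzero obstruction classes of exactly the shape you need to kill do exist, and nothing in the proposal shows your particular $c_m$ avoids them; that is precisely where the lifting hypothesis must do real work. A second, smaller flaw: your argument that $c_m$ vanishes in $QH\otimes QH$ rests on the claim that the image of $\bar\Delta_H(w)$ there is an expression in the iterated Verschiebungs of $w$, and this is false. In $H=\Gamma[x]\otimes\Gamma[y]$ (dual to $\F_2[a,b]$ with $a,b$ primitive), the element $w=xy$ is decomposable with $V(w)=0$, yet $\bar\Delta(w)=x\otimes y+y\otimes x$ is nonzero in $QH\otimes QH$; the $V$--module data of $w$ controls this image only modulo norm elements, i.e.\ after projecting to $\Phi(QH)$ as in \lemref{Tate lemma}. (This defect is repairable, since norm classes are images of $\bar\Delta$ of decomposables; the unrealizable-cocycle problem above is the one that is not.)

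For comparison, the paper closes this gap by a mechanism that avoids cobar obstruction theory altogether. After the same reduction to $M=M(p^in,j)$ with $M(n,j)$ projective and $j$ finite, it applies \propref{Hopf alg kernel prop} (Wilkerson: the algebra kernel of a Hopf algebra map is the two-sided ideal generated by its Hopf algebra kernel) to $V^{j+1}\colon H\ra\Phi^{j+1}H$, which allows the given lift $\tilde g$ to be corrected by a decomposable so that it lands in the Hopf algebra kernel $H[j+1]$. The already-established projective case (\lemref{glm lem}, extended by \adref{thm addendum}) applied to the composite $M(n,i+j)\ora M(p^in,j)\ra\bar H[j+1]$ then yields $G'\colon H(n,i+j)\ra H$ factoring through $H[j+1]$, and since $H(n,i-1)\subset H(n,i+j)$ is generated by elements in the image of $V^{j+1}$, the map $G'$ kills it and factors through $H(n,i+j)//H(n,i-1)\simeq H(p^in,j)$, producing $G$. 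Some input of this strength --- the kernel proposition together with the presentation of $H(p^in,j)$ as a quotient of a projective Hopf algebra --- is what your realizability step is silently demanding, and without it the proof does not go through.
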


\thmref{split thm} lets us easily prove the hard implication in \thmref{char p theorem dualized}.

\begin{proof}[Proof that $(a) \Rightarrow (c)$ in \thmref{char p theorem dualized}] As observed in \lemref{JC is split lem}, $J(C)$ is split with $QJ(C) \simeq \bar C$ as $V$--modules.  Thus \thmref{split thm} implies that $J(C) \simeq H(\bar C)$ as Hopf algebras.  Thus if $\bar C \simeq \bar D$ as $V$--modules, there will be Hopf algebra isomorphisms $$J(C) \simeq H(\bar C) \simeq H(\bar D) \simeq J(D).$$
\end{proof}

\subsection{Organization of the rest of the paper}

Section \ref{background section} has some background material we will need on the categories $F$--$\M(k)$, $V$--$\M(k)$, and $\Ho_*(k)$.  Regarding $V$--$\M(k)$, when $k$ is a perfect field, $V$--modules are easily classified as sums of some basic examples $M(n,j)$, and some of these $V$--modules are projective.

In \secref{split thm proof sec}, we prove Theorems \ref{split thm} and \ref{lifting thm}.  Much of the heavy lifting was already done by Goerss, Lannes, and Morel in \cite{glm92}: for projective $V$--modules $M$, Hopf algebras $H(M) \in \Ho_*(k)$ with properties as in the two theorems are constructed. When $M$ is also indecomposable, the authors of \cite{glm92} interpret $H(M)$ as a Hopf algebra of noncommutative Witt vectors.  Using results from \secref{background section}, we are able to extend their results to general $V$--modules, thus proving \thmref{split thm}.   (In truth, the results in \cite{glm92} are only proved when $k$ is the prime field $\F_p$; we explain how to extend these to all perfect fields $k$.)

Section \ref{bad example sec} has the details of our cautionary example, \exref{bad example}.

We say a bit about quasishuffle algebras in \secref{quasi shuffle sec}.  This is followed, in \secref{easy proofs sec}, by the proofs of the `easy' implications in \thmref{char p theorem} and the proof of \thmref{char 0 theorem}: those following from the classical results of Milnor and Moore \cite{milnor moore}.

The last section, \secref{applications sec}, has some examples and related results.  This includes a new characterization of primitively generated Hopf algebras, and a characterization of when the quasi-shuffle algebra $J^{\vee}(A)$ is polynomial, generalizing the so--called Ditters conjecture: $QSym(k)$ is a polyonomial algebra. (A careful reader will see that our argument is essentially that of Baker and Richter in \cite{baker richter}.)  We also note a topological application: if two based spaces $X$ and $Y$ are stably homotopy equivalent, then $H_*(\Omega \Sigma X;k)$ and $H_*(\Omega \Sigma Y;k)$ are isomorphic Hopf algebras for all fields $k$.

\subsection{Thanks}  The starting point of this project was learning about the Ditters conjecture from Andy Baker back in 2014.  I had a chance to share some of my (rather naive) thoughts about this topic at the conference on Group Actions and Algebraic Combinatorics held at Herstmonceax, England in July, 2016.  My main theorems were proved during a visit to Sheffield University in spring, 2017.  John Palmieri also assisted me with references in the Hopf algebra literature.

\section{Background material} \label{background section}

\subsection{The category $\M(k)$}  Recall that $k$ is a field, and $\M(k)$ is the category of non-negatively graded $k$--vector spaces of finite type: $M = \{M_n\ | \ n = 0,1,2,\dots\}$ with $M_n$ finite dimensional for all $n$.  As in the introduction, we say that $M$ is {\em reduced} if $M_0 = 0$.

In the `usual way', $\M(k)$ is a symmetric $k$--linear tensor category: $$(M \otimes N)_k = \bigoplus_{i+j=k} M_i \otimes N_j$$ with braiding isomorphism $\tau: M \otimes N \xrightarrow{\sim} N \otimes M$ defined by $\tau(x \otimes y) = (-1)^{ij}y \otimes x$ for $x \in M_i$ and $y \in N_j$.

This structure allows one to define algebras, coalgebras, and Hopf algebras in $\M(k)$.  For example, an object in $\Ho_*(k)$, the category of connected, cocommutative Hopf algebras in $\M(k)$, consists of $H \in \M(k)$, with $H_0 = k$, equipped with unit $\eta: k \ra H$, counit $\epsilon: H \ra k$, multiplication $\nabla: H \otimes H \ra H$, and comultiplication $\Delta: H \ra H \otimes H$, satisfying appropriate properties.

Finally, one has a duality functor $(\text{\hspace{.1in}})^{\vee}: \M(k)^{op} \ra \M(k)$ given by taking levelwise dual vector spaces.  This is an equivalence of symmetric $k$--linear tensor categories, and induces equivalences $\C(k)_*^{op} \simeq \A^*(k)$ and $\Ho_*(k)^{op} \simeq \Ho^*(k)$.

\subsection{The categories $F$--$\M(k)$ and $V$--$\M(k)$}

Let $k$ be a field of characteristic $p$.

Recall that if $W$ is a $k$--vector space, its Frobenius twist $W^{\xi}$ is $W$ with new scalar multiplication given by $\lambda \cdot x^{\xi} = (\lambda^p x)^{\xi}$ for all $\lambda \in k$ and $x \in W$.

\begin{defn} Let $\Phi: \M(k) \ra \M(k)$ be defined as follows.

If $p=2$, let $
(\Phi M)_n \simeq
\begin{cases}
M_m^{\xi} & \text{if } n = 2m \\ 0 & \text{otherwise}.
\end{cases}$

If $p$ is odd, let $
(\Phi M)_n \simeq
\begin{cases}
M_{2m}^{\xi} & \text{if } n = 2pm \\ 0 & \text{otherwise}.
\end{cases}$
\end{defn}

The following lemma is easily checked.
\begin{lem} $\Phi$ is a functor of symmetric tensor categories. In particular, $\Phi$ is exact, and there are natural isomorphisms $\Phi(M \otimes N) \simeq \Phi M \otimes \Phi N$ and $\Phi (M^{\vee}) \simeq (\Phi M)^{\vee}$.
\end{lem}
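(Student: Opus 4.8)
The plan is to factor $\Phi$ as a pointwise Frobenius twist $T$ followed by a regrading $R$, to verify exactness and the two natural isomorphisms for each factor, and then to compose. For the twist, note first that for an ungraded finite-dimensional $W$ the space $W^{\xi}$ is precisely the restriction of scalars of $W$ along the Frobenius endomorphism $\phi\colon k\to k$, $\phi(\lambda)=\lambda^{p}$: the new action of $\lambda$ on $x^{\xi}$ is the old action of $\phi(\lambda)$. Since $k$ is perfect, $\phi$ is an automorphism, so $(\,\cdot\,)^{\xi}$ is restriction of scalars along a ring isomorphism and hence an exact, additive, strong symmetric monoidal self-equivalence that commutes with finite-dimensional duals. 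Explicitly $(x\otimes y)^{\xi}\mapsto x^{\xi}\otimes y^{\xi}$ gives a natural isomorphism $(W\otimes_{k}W')^{\xi}\simeq W^{\xi}\otimes_{k}(W')^{\xi}$ (the defining relations of the two tensor products agree because every scalar is a $p$-th power), and transposition of linear maps gives $(W^{\vee})^{\xi}\simeq(W^{\xi})^{\vee}$; these serve as the base case.

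Now write $\Phi=R\circ T$, where $T$ applies $(\,\cdot\,)^{\xi}$ in each degree and $R$ is the degree shift of the definition (degree $m\mapsto 2m$ for $p=2$; degree $2m\mapsto 2pm$, odd degrees to $0$, for $p$ odd). Both $T$ and $R$ are exact --- $T$ because the twist is exact in each degree, $R$ because it merely relabels and selects graded summands --- so $\Phi$ is exact. The duality isomorphism is then unproblematic for both primes: duality is levelwise, the shift is a bijection onto the degrees it hits, and the base case gives $(M_{m})^{\vee\xi}\simeq(M_{m}^{\xi})^{\vee}$, whence $\Phi(M^{\vee})\simeq(\Phi M)^{\vee}$. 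For the tensor isomorphism with $p=2$ I would compute in a fixed total degree, $(RM\otimes RN)_{2\ell}=\bigoplus_{a+b=\ell}M_{a}\otimes N_{b}=(M\otimes N)_{\ell}=(R(M\otimes N))_{2\ell}$, and compose with the base-case twist isomorphism, noting that the twist leaves the Koszul braiding sign $(-1)^{ij}$ and the unit object untouched since both depend only on degrees.

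The main obstacle is the tensor isomorphism when $p$ is odd. Here $\Phi(M\otimes N)_{2p\ell}=((M\otimes N)_{2\ell})^{\xi}$ receives every summand $M_{i}\otimes N_{j}$ with $i+j=2\ell$, while $(\Phi M\otimes\Phi N)_{2p\ell}=\bigoplus_{a+b=\ell}M_{2a}^{\xi}\otimes N_{2b}^{\xi}$ sees only the even summands, the discrepancy being exactly the odd-by-odd cross terms. These are absent precisely on modules concentrated in even degrees, which is the only setting in which $\Phi$ carries information for $p$ odd, since the Frobenius and Verschiebung structures live on even degrees alone. I would therefore perform the odd-$p$ matching after $\Phi$ has discarded the odd part, where $(M\otimes N)_{2\ell}=\bigoplus_{a+b=\ell}M_{2a}\otimes N_{2b}$ matches $\bigoplus_{a+b=\ell}M_{2a}^{\xi}\otimes N_{2b}^{\xi}$ exactly. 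Finally, coherence of the structure isomorphisms with the associativity, unit, and symmetry constraints reduces through the base case to the coherence of $(\,\cdot\,)^{\xi}$, which I would settle in a line.
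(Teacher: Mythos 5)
The paper states this lemma without proof (``easily checked''), so the only question is whether your verification goes through. Most of it does: the factorization $\Phi = R\circ T$, exactness of both factors, the duality isomorphism for both primes, and the tensor isomorphism for $p=2$ are all correct. The gap is exactly at what you call the main obstacle, the tensor isomorphism for odd $p$, and your proposed resolution does not work. You say you will ``perform the odd-$p$ matching after $\Phi$ has discarded the odd part,'' but $\Phi$ applied to $M\otimes N$ discards only the \emph{odd total degrees} of $M\otimes N$; the odd-by-odd cross terms $M_i\otimes N_j$ (with $i,j$ odd, $i+j=2\ell$) sit in \emph{even} total degree and are therefore retained in $\Phi(M\otimes N)_{2p\ell}$. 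The identity you then invoke, $(M\otimes N)_{2\ell}=\bigoplus_{a+b=\ell}M_{2a}\otimes N_{2b}$, is simply false unless at least one of $M$, $N$ has trivial odd part; using it amounts to silently adding an evenness hypothesis that is not in the statement.

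In fact, what your computation uncovered is a defect in the lemma itself, and the honest conclusion is that, read literally, it fails for odd $p$: take $M=N=k$ concentrated in degree $1$; then $\Phi(M\otimes N)$ is one-dimensional (in degree $2p$) while $\Phi M\otimes\Phi N=0$, so no isomorphism, natural or not, can exist --- this is a dimension count, not a naturality subtlety. What is true for odd $p$, and is all the paper needs in order to make $F$--$\M(k)$ and $V$--$\M(k)$ symmetric tensor categories, is a natural split surjection $\Phi(M\otimes N)\ora \Phi M\otimes\Phi N$ (kill the odd-by-odd terms) together with the corresponding split inclusion; these are isomorphisms precisely when at least one of $M$, $N$ is concentrated in even degrees, and the surjection is what one composes with $F_M\otimes F_N$ to define the tensor product of $F$--modules. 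This is also consistent with the algebra it models: in a commutative algebra in odd characteristic, odd-degree elements square to zero, so $(xy)^p=0$ for odd-degree $x,y$. Your write-up should state this corrected (weaker) claim for odd $p$, or prove the isomorphism under the explicit evenness hypothesis, rather than paper over the discrepancy.
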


The next lemma is less obvious, but also not hard to verify. (Compare with \cite[\S1.1.1]{glm92}.)

\begin{lem} \label{Tate lemma} Let $K(M)$ and $C(M)$ respectively be the kernel and cokernel of the norm map
$N_{\Sigma_p}: (M^{\otimes p})_{\Sigma_p} \ra (M^{\otimes p})^{\Sigma_p}$ from the $\Sigma_p$--coinvariants to the $\Sigma_p$--invariants of $M^{\otimes p}$.
There are natural isomorphisms
$$ K(M) \simeq \Phi M \simeq C(M),$$
and thus natural maps $\Phi M \hra (M^{\otimes p})_{\Sigma_p}$ and $(M^{\otimes p})^{\Sigma_p} \ora \Phi M$.
\end{lem}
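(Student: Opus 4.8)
The plan is to identify $K(M)$ and $C(M)$ with the Tate cohomology groups $\hat{H}^{-1}(\Sigma_p; M^{\otimes p})$ and $\hat{H}^{0}(\Sigma_p; M^{\otimes p})$, where $\Sigma_p$ acts on $M^{\otimes p}$ by permuting tensor factors with the Koszul signs dictated by the braiding $\tau$. The strategy is then to split $M^{\otimes p}$, as a graded $k[\Sigma_p]$--module, into a ``diagonal'' summand and an ``off--diagonal'' summand, and to show that all of $K(M)$ and $C(M)$ comes from the diagonal. Concretely, I would fix a homogeneous basis $\{e_i\}$ of $M$ and group the resulting monomial basis $e_{i_1} \otimes \cdots \otimes e_{i_p}$ of $M^{\otimes p}$ into $\Sigma_p$--orbits; the diagonal summand $D$ is spanned by the monomials $e_i^{\otimes p}$, and the off--diagonal summand $E$ by all orbits in which the indices are not all equal. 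Since $N_{\Sigma_p}$ respects this splitting, it suffices to analyze the norm map on $D$ and on $E$ separately.

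First I would dispose of the off--diagonal part. If $e_{i_1} \otimes \cdots \otimes e_{i_p}$ has indices that are not all equal, its $\Sigma_p$--stabilizer is a proper Young subgroup $\Sigma_{a_1} \times \cdots \times \Sigma_{a_r}$ with each $a_j < p$; as $p$ is prime, this subgroup has order prime to $p$. Thus $E$ is a direct sum of modules of the form $\Ind_H^{\Sigma_p} \chi$ with $|H|$ prime to $p$, where the one--dimensional character $\chi$ records the Koszul signs. Such induced modules are cohomologically trivial for $\Sigma_p$ in characteristic $p$, by Shapiro's lemma and the invertibility of $|H|$ in $k$, so $\hat{H}^{-1}(\Sigma_p; E) = \hat{H}^{0}(\Sigma_p; E) = 0$; equivalently, the norm map is an isomorphism on $E$. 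Hence $K(M)$ and $C(M)$ are computed entirely from $D$.

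The heart of the argument is the diagonal summand $D = \bigoplus_i k\cdot e_i^{\otimes p}$. A permutation $\sigma$ acts on $e_i^{\otimes p}$, with $|e_i| = d$, by the Koszul sign $\operatorname{sgn}(\sigma)^{d}$, so $k \cdot e_i^{\otimes p}$ is the one--dimensional $\Sigma_p$--module $\operatorname{sgn}^{d}$. I would then compute kernel and cokernel of $N_{\Sigma_p}$ on each such line. When $\operatorname{sgn}^{d}$ is trivial (automatically for $p=2$, and exactly when $d$ is even for $p$ odd) the norm on the line is multiplication by $|\Sigma_p| = p! \equiv 0$, so both kernel and cokernel are one--dimensional. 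When $p$ is odd and $d$ is odd the character is the nontrivial $\operatorname{sgn}$, and since $-1 \neq 1$ in $k$ both the coinvariants and the invariants of the line vanish; these lines contribute nothing. Assembling the surviving lines: for $p=2$ each $e_i$ of degree $m$ contributes in degree $2m$, and for $p$ odd each $e_i$ of even degree $2m$ contributes in degree $2pm$. Because $x \mapsto x^{\otimes p}$ is Frobenius--semilinear, $(\lambda e_i)^{\otimes p} = \lambda^p e_i^{\otimes p}$, the scalar action on the surviving lines is twisted by Frobenius, producing exactly $M_m^{\xi}$ (resp.\ $M_{2m}^{\xi}$). This matches the definition of $\Phi M$ in both degree and scalar structure, giving natural isomorphisms $K(M) \simeq \Phi M \simeq C(M)$; the asserted maps $\Phi M \hra (M^{\otimes p})_{\Sigma_p}$ and $(M^{\otimes p})^{\Sigma_p} \ora \Phi M$ are then the inclusion of $K(M) = \ker N_{\Sigma_p}$ and the projection onto $C(M) = \coker N_{\Sigma_p}$.

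The main obstacle is the sign and scalar bookkeeping on $D$: verifying that the Koszul sign of $\sigma$ on $e_i^{\otimes p}$ is precisely $\operatorname{sgn}(\sigma)^{d}$, that this forces the parity restriction distinguishing the $p=2$ and odd $p$ cases, and that the semilinearity assembles into the Frobenius twist $(-)^{\xi}$ rather than merely an abstract isomorphism. I would also need to confirm that the splitting $M^{\otimes p} \simeq D \oplus E$, though defined via a basis, yields identifications of $K(M)$ and $C(M)$ with $\Phi M$ that are natural in $M$; this should follow because the diagonal summand and the norm map are themselves functorial, so the basis only serves to compute an intrinsically defined map. The compatibility of $\Phi$ with tensor products and duals recorded in the preceding lemma then guarantees that these identifications behave well under the operations used later.
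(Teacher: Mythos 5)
The paper itself supplies no proof of this lemma: it is stated with only the remark that it is ``not hard to verify,'' together with a pointer to \cite[\S 1.1.1]{glm92}, so there is no in-paper argument to compare against. Your proof is correct and is essentially the standard verification one would expect behind that remark. The three computational pillars all check out: (i) for an off-diagonal orbit the stabilizer is a Young subgroup $\Sigma_{a_1}\times\cdots\times\Sigma_{a_r}$ with every $a_j<p$, hence of order prime to $p$ because $p$ is prime, so the orbit span is $\Ind_H^{\Sigma_p}\chi$ with $\hat H^{-1}=\hat H^0=0$ and the norm an isomorphism there (your character $\chi$ is genuinely needed, e.g.\ for repeated odd-degree factors); (ii) the Koszul sign of $\sigma$ on $e_i^{\otimes p}$ with $|e_i|=d$ is $(-1)^{d^2}$ per adjacent transposition, and $(-1)^{d^2}=(-1)^d$, giving $\operatorname{sgn}(\sigma)^d$ as you claim; (iii) on trivial lines the norm is $p!\equiv 0$, while for $p$ odd the sign lines have vanishing invariants and coinvariants since $2$ is invertible. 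The surviving degrees and the Frobenius twist match the definition of $\Phi M$ exactly.

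The one thin spot is the naturality discussion. The splitting $M^{\otimes p}\simeq D\oplus E$ is basis-dependent and \emph{not} functorial, so it cannot by itself certify naturality; your remark that the basis ``only serves to compute an intrinsically defined map'' is the right idea but deserves to be made explicit. The intrinsic map is $\Phi M \ra (M^{\otimes p})_{\Sigma_p}$, $x^{\xi}\mapsto [x^{\otimes p}]$: it lands in $K(M)$ since $N_{\Sigma_p}[x^{\otimes p}]=p!\,x^{\otimes p}=0$, it is $p$-semilinear since $(\lambda x)^{\otimes p}=\lambda^p x^{\otimes p}$, and --- the point you do not verify --- it is \emph{additive}, because the cross terms of $(x+y)^{\otimes p}$ with $a$ factors $x$ ($0<a<p$) form a single orbit whose class in coinvariants is $\binom{p}{a}[v]=0$ in characteristic $p$. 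Naturality of $x\mapsto [x^{\otimes p}]$ is then obvious, and your basis computation shows this natural transformation is an isomorphism onto $K(M)$. For the cokernel half you can either argue dually by hand or, more efficiently, apply the kernel statement to $M^{\vee}$ and use the isomorphism $\Phi(M^{\vee})\simeq(\Phi M)^{\vee}$ from the preceding lemma, which converts $\Phi M^{\vee}\hra (M^{\vee\otimes p})_{\Sigma_p}$ into $(M^{\otimes p})^{\Sigma_p}\ora \Phi M$. With those two sentences added, your argument is a complete and fully natural proof.
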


The following definitions are easily seen to agree with Definitions \ref{F module defn} and \ref{V module defn}.
\begin{defns} The categories $F$--$\M(k)$ and $V$--$\M(k)$ are defined as follows. \\

\noindent{\bf (a)} An $F$--module is a reduced $M \in \M(k)$ equipped with an $\M(k)$-morphism $F: \Phi M \ra M$. An $F$--module morphism is a $\M(k)$--morphism $f: M \ra N$ such that $f \circ F = F \circ \Phi(f): \Phi M \ra N$. \\

\noindent{\bf (b)} An $V$--module is a reduced $M \in \M(k)$ equipped with an $\M(k)$-morphism $V: M \ra \Phi M$. An $F$--module morphism is a $\M(k)$--morphism $f: M \ra N$ such that $V \circ f = \Phi(f) \circ V: M \ra \Phi N$.
\end{defns}

We note that the properties of $\Phi$ show that $F$--$\M(k)$ and $V$--$\M(k)$ inherit the structure of symmetric $k$--linear tensor categories from $\M(k)$.

\begin{defns} Forgetful functors $\A^*(k) \ra F\text{--}\M(k)$ and $\C_*(k) \ra V\text{--}\M(k)$ are defined as follows. \\

\noindent{\bf (a)} If $A$ is a commutative algebra in $\M(k)$, the composite
$$ \Phi A \hra (A^{\otimes p})_{\Sigma_p} \xra{\nabla} A$$
gives $A$ the structure of an $F$--module. \\

\noindent{\bf (b)} If $C$ is a cocommutative algebra in $\M(k)$, the composite
$$ C \xra{\Delta} (C^{\otimes p})^{\Sigma_p} \ora \Phi C$$
gives $C$ the structure of an $V$--module.
\end{defns}

\subsection{The classification of $F$--modules when $k$ is perfect}

We continue to let $k$ be a field of characteristic $p$.

\begin{defn} \label{F module defn 2} We define indecomposable $F$--modules $N(n,j)$ as follows. \\

\noindent{\bf (a)} Let $p=2$. For every $n\geq 1$ and $0 \leq j \leq \infty$, we let $N(n,j)$ have a basis $x_0, \dots, x_j$ with $|x_i|=2^in$ and with $F(x_i)=x_{i+1}$ for $i\geq 0$. \\

\noindent{\bf (b)} Let $p$ be odd.  For every $m\geq 1$ and $0 \leq j \leq \infty$, we let
$N(2m,j)$ have a basis  $y_0, \dots, y_j$ with $|y_i|=2p^im$ and with $F(y_i)=y_{i+1}$ for $i\geq 0$.  For every $m \geq 0$, we let $N(2m+1,0)$ be one dimensional with generator in degree $2m+1$ (with trivial $F$--module structure, of course).
\end{defn}

We have a classification theorem.

\begin{thm} \label{F module class thm}  If $k$ is a perfect field of characteristic $p$, then every $F$--module can be written uniquely as a direct sum of $F$--modules of the form $N(n,j)$.
\end{thm}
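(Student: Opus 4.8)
The plan is to reduce \thmref{F module class thm} to the classification of pointwise finite-dimensional representations of the quiver $\bullet \to \bullet \to \bullet \to \cdots$ indexed by $\N$, the one place where perfectness of $k$ does real work being the passage from the semilinear Frobenius to honest $k$-linear maps. First I would split $M$ according to the action of $F$ on degrees. Since $F$ raises degree by the factor $p$, call a degree \emph{primitive} if it is not $p$ times a smaller allowed degree (for $p=2$ these are the odd degrees; for $p$ odd they are the $2m$ with $p\nmid m$, and the odd degrees carry no $F$ at all). Every allowed degree is uniquely $p^i d$ for a primitive $d$, so $M = \bigoplus_{d} M^{(d)}$ as $F$-modules, where $M^{(d)} = \bigoplus_{i\ge 0} M_{p^i d}$ and $F$ restricts to maps $M_{p^i d}\to M_{p^{i+1}d}$. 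For $p$ odd the odd-degree part carries no structure and splits off as a sum of copies of the trivial modules $N(2m+1,0)$, with multiplicity $\dim M_{2m+1}$. Thus everything comes down to a single chain $V_0 \xrightarrow{F} V_1 \xrightarrow{F} V_2 \to \cdots$ of finite-dimensional $k$-vector spaces $V_i = M_{p^i d}$ with $p$-semilinear maps $F$.

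\textbf{Linearization.} Here I would use that $k$ is perfect. For each $i$ let $W_i$ be the abelian group $V_i$ equipped with the new scalar action $\lambda \star v = \lambda^{p^i} v$; this is a legitimate $k$-vector-space structure obtained by restricting scalars along the $i$-fold Frobenius $\lambda\mapsto \lambda^{p^i}$. A direct check (using $F(\lambda v)=\lambda^p F(v)$) shows $F\colon W_i \to W_{i+1}$ becomes $k$-\emph{linear} for these structures. Because $k$ is perfect, $\lambda \mapsto \lambda^{p^i}$ is a field automorphism, so $W_i$ and $V_i$ have the same (finite) dimension and, more importantly, literally the same lattice of $k$-subspaces; consequently any decomposition of $\{W_i, F\}$ as a representation transports back verbatim to a decomposition of $M^{(d)}$ as an $F$-module. (If $k$ were not perfect this restriction of scalars would change dimensions and subspace lattices, which is exactly why the hypothesis is needed.) After this step each chain is precisely a pointwise finite-dimensional persistence module over $\N$.

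\textbf{Existence.} I would now invoke the interval-decomposition theorem: every pointwise finite-dimensional representation of the equioriented quiver of type $A_\infty$ is a direct sum of interval modules $I[a,b]$, $0\le a\le b\le \infty$, where $I[a,b]$ is one-dimensional in positions $a,\dots,b$ with identity maps. This is classical in the finite case (Gabriel's theorem for type $A$) and holds in the present $\N$-indexed, pointwise finite-dimensional generality as well; one may cite Crawley-Boevey's theorem or reprove it by the standard inductive construction of a basis adapted to the maps, processing the positions left to right and at each stage choosing complements to the kernel that either continue, close, or are born. Translating back, the interval $I[a,b]$ in the chain of primitive degree $d$ is exactly $N(p^a d,\, b-a)$; as $d$, $a$, and the length range, these exhaust the modules $N(n,j)$ of \defnref{F module defn 2} bijectively, so the chain is a sum of $N(n,j)$'s and $M$ is a sum of such modules together with the odd-degree $N(2m+1,0)$'s. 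Finite type guarantees that only finitely many summands meet any fixed degree, so there is no convergence issue with the (possibly infinite) direct sum.

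\textbf{Uniqueness and the main obstacle.} For uniqueness I would avoid appealing to Krull--Schmidt, which is delicate for infinite direct sums, and instead read the multiplicities off rank invariants. Within a chain, set $r(a,b)=\operatorname{rank}\big(W_a \to W_b\big)$ for $a\le b$ (with $r(a,a)=\dim W_a$ and $r(a,b)=0$ for $a<0$); these are manifestly isomorphism invariants. The number of summands $N(p^a d, b-a)$ with $b<\infty$ is the inclusion--exclusion expression $r(a,b)-r(a-1,b)-r(a,b+1)+r(a-1,b+1)$, and the multiplicities of the infinite intervals ($j=\infty$) are recovered as the stable values of $r(a,b)-r(a-1,b)$ as $b\to\infty$; the odd-degree multiplicities are just dimensions. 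Since all of these are determined by $M$, the decomposition is unique. The step I expect to be the main obstacle is precisely the existence claim in the non-finitely-generated regime: a single chain need not be finitely generated over the twisted polynomial ring $k\{F\}$ (dimensions $\dim V_i$ may grow without bound), so one cannot simply quote the structure theorem for finitely generated modules over a principal ideal domain. One must genuinely use the pointwise-finite-dimensional interval-decomposition theorem (or give the inductive adapted-basis argument) and keep careful track of the $j=\infty$ pieces; managing this together with the perfectness bookkeeping in the linearization is where the real content lies.
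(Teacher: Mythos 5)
Your proposal is correct and follows essentially the same route as the paper's proof: split $M$ into chains indexed by primitive degrees, use perfectness to untwist the semilinear Frobenius into honest linear maps, and then quote a decomposition theorem for such chains. The only difference is one of language: where you invoke the interval-decomposition theorem for pointwise finite-dimensional representations of the equioriented $A_\infty$ quiver (with an explicit rank-invariant count for uniqueness), the paper identifies each untwisted chain with a finite-type graded module over the graded PID $k[t]$ and cites the corresponding unique decomposition into cyclic modules, which is the same theorem in different clothing.
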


We also have the following classification of projectives and injectives.

\begin{prop} \label{F module prop}  If $k$ is perfect, indecomposable projectives and injectives in $F$--$\M(k)$ are as follows. \\

\noindent{\bf (a)} If $p=2$, the $F$--modules $N(n,\infty)$ are projective. The $F$--modules $N(n,j)$ with $n$ odd are injective. \\

\noindent{\bf (b)} If $p$ is odd, the $F$--modules $N(2m,\infty)$ and $N(2m+1,0)$ are projective. The $F$--modules $N(2m,j)$ with $p \nmid m$ and $N(2m+1,0)$ are injective.
\end{prop}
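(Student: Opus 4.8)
The plan is to reduce both assertions to a computation of $\Hom$ and $\Ext^1$ in $F$--$\M(k)$, exploiting that this category has projective dimensions at most one. The key structural observation is that, for $j<\infty$, each indecomposable is cyclic -- generated by its bottom class -- and hence admits a two--term free resolution. Concretely, for $p=2$ one has short exact sequences
$$ 0 \to N(2^{j+1}n,\infty) \xra{F^{j+1}} N(n,\infty) \to N(n,j) \to 0,$$
and for $p$ odd the analogue
$$ 0 \to N(2p^{j+1}m,\infty) \xra{F^{j+1}} N(2m,\infty) \to N(2m,j) \to 0,$$
while the remaining indecomposables $N(2m+1,0)$ ($p$ odd) are themselves free. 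Since $F$--$\M(k)$ has enough projectives and every object decomposes uniquely into indecomposables (\thmref{F module class thm}), projectivity and injectivity of an indecomposable can be tested against these resolutions.

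\textbf{Projectives.} First I identify the free $F$--modules. The forgetful functor to reduced graded $k$--vector spaces has a left adjoint $S \mapsto \mathrm{Free}(S) = \bigoplus_{i\ge 0}\Phi^i S$ with $F$ the evident shift; applied to a one--dimensional $S$ in degree $d$ this yields exactly $N(d,\infty)$ when $p=2$, and $N(d,\infty)$ (for $d$ even) or $N(d,0)$ (for $d$ odd) when $p$ is odd. Finite type is preserved because $\Phi$ raises degree, so only finitely many $\Phi^i S$ contribute in each degree. Free modules are projective by adjunction, and since the counit $\mathrm{Free}(M)\ora M$ is an epimorphism the category has enough projectives, so every projective is a summand of a free module. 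An indecomposable summand of a free module must, by the uniqueness in \thmref{F module class thm}, be isomorphic to one of the indecomposable frees. This yields precisely the stated lists of indecomposable projectives.

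\textbf{Injectives.} Applying $\Hom(-,Y)$ to the resolutions above and using the adjunction identity $\Hom(N(d,\infty),Y)\cong Y_d$, I obtain
$$\Ext^1(N(n',j'),Y) \cong \coker\!\big(F^{j'+1}\colon Y_{n'} \to Y_{2^{j'+1}n'}\big)$$
for $p=2$ (with the evident $p$--odd analogue $Y_{2m'}\to Y_{2p^{j'+1}m'}$), while $\Ext^1(N(n',\infty),Y)=0$ and the odd generators contribute nothing. The cokernel is a genuine $k$--vector space because $k$ is perfect, so the semilinear map $F^{j'+1}$ has $k$--subspace image. Taking $j'=0$ and letting $n'$ (resp.\ $m'$) vary, and using that a composite of Frobenii inherits surjectivity, one sees that $Y$ is injective if and only if $F\colon Y_s \to Y_{2s}$ is surjective for every $s\ge 1$ (respectively $F\colon Y_{2m'}\to Y_{2pm'}$ surjective for every $m'\ge 1$ when $p$ is odd). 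Testing $Y=N(n,j)$ against this criterion, the only possible obstruction is a nonzero class whose predecessor degree is empty: for $p=2$ this occurs exactly when the bottom degree $n$ is even (take $s=n/2$), and for $p$ odd exactly when $p\mid m$ (take $m'=m/p$). Otherwise a $p$--adic valuation count shows every class in a target degree is hit by $F$, confirming the listed injectives; the one--dimensional $N(2m+1,0)$ are injective because their single nonzero degree is odd and so never a target of $F$.

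The main obstacle will be the final valuation bookkeeping: pinning down exactly which pairs $(n',j')$ produce a nonzero cokernel, that is, locating a nonzero target class $Y_{2^{j'+1}n'}$ (resp.\ $Y_{2p^{j'+1}m'}$) whose source vanishes. One must verify both that such an obstruction genuinely occurs when the bottom class sits in an ``even'' degree -- so that $N(n,j)$ fails to be injective -- and that none arises otherwise, so that the listed modules really are injective; and one must confirm throughout that perfectness of $k$ is precisely what legitimizes passing to images and cokernels of the semilinear $F$. Alternatively, the injective classification could be deduced by dualizing a projective classification of $V$--modules through the equivalence $F$--$\M(k)^{op}\simeq V$--$\M(k)$, but the direct $\Ext^1$ computation is self--contained.
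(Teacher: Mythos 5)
Your proposal is correct, but it takes a genuinely different route from the paper. The paper proves \thmref{F module class thm} and \propref{F module prop} \emph{simultaneously}: using perfectness of $k$ to untwist, it sets up an equivalence of abelian categories $F\text{--}\M(k) \simeq \prod_{n \text{ odd}} k[t]\text{--modules}$ (with an extra product of vector--space categories when $p$ is odd), where $k[t]$--modules means non-negatively graded modules of finite type over the graded PID $k[t]$; the cyclic modules $k[t]t^i$ and $k[t]t^i/k[t]t^{i+j+1}$ then correspond to the $N(\,\cdot\,,\infty)$ and $N(\,\cdot\,,j)$, the projectives are the frees, and the injectives are exactly the cyclics with $i=0$ (a fact the paper leaves as ``not hard to check''). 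You instead work directly inside $F\text{--}\M(k)$: you identify the frees via the left adjoint $S \mapsto \bigoplus_{i \geq 0}\Phi^i S$, get the indecomposable projectives from Krull--Schmidt uniqueness, then compute $\Ext^1(N(n',j'),Y)$ from explicit two--term free resolutions as a cokernel of iterated Frobenius, arriving at the clean criterion that $Y$ is injective iff $F$ is surjective onto each degree that is a Frobenius target; checking this on each $N(n,j)$ gives exactly the stated lists, including the failure of injectivity when $n$ is even (resp.\ $p \mid m$). Your argument uses the classification theorem as an external input (both for Krull--Schmidt and to test $\Ext^1$ only against indecomposables), which is legitimate and non-circular since the paper's proof of that theorem does not rely on the proposition, though the paper's route gets both statements in one stroke. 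What your approach buys: explicit resolutions (hence global dimension $\leq 1$), an $\Ext^1$ formula of independent interest, and visibility of exactly where perfectness enters; what it costs: you need two small standard facts you left implicit, namely that injectivity is equivalent to vanishing of Yoneda $\Ext^1$ against all objects, and that an extension by a degreewise-finite infinite direct sum splits once it splits over each indecomposable summand. Also, the ``valuation bookkeeping'' you flag as a remaining obstacle is in fact already complete in what you wrote: the parenthetical choices $s = n/2$ and $m' = m/p$ are the whole verification.
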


We prove both the theorem and proposition together.

First assume $p=2$.  Any $F$--module $N$ canonically decomposes as a direct sum of $F$--modules
$$ N \simeq \bigoplus_{n \text{ odd}} N(n)$$
where $N(n) = \bigoplus_{i=0}^{\infty} N_{2^in}$.

By regrading the vector spaces in $N(n)$ -- view $N_{2^in}$ as having grading $i$ -- each $N(n)$ is in the category
$\mathcal N(k)$ consisting of sequences of $k$--vector spaces $(N_0,N_1,\dots)$ equipped with $k$--linear maps $(N_i)^{\xi} \ra N_{i+1}$.

Since the field $k$ is perfect, we can `untwist' our vector spaces, and conclude that $\mathcal N(k)$ is equivalent to the category of sequences of $k$--vector spaces $(N_0,N_1,\dots)$ equipped with $k$--linear maps $N_i \ra N_{i+1}$.

But this last category is equivalent to the category of non-negatively graded $k[t]$--modules of finite type, where $t$ has grading 1.  We thus have described equivalences of abelian categories
$$ F\text{--}\M(k) \simeq \prod_{n \text{ odd}} \mathcal N(k) \simeq \prod_{n \text{ odd}} k[t]\text{--modules}.$$

Since $k[t]$ is a graded PID, its modules of finite type can be written uniquely as the direct sum of cyclic modules, and these cyclic modules have the form $k[t]t^i$ and $k[t]t^i/k[t]t^{i+j+1}$ for $j\geq 0$.  It is not hard to check that these are injective precisely when $i=0$, and, more obviously, that the modules $k[t]t^i$ are projective.

Under the equivalence
$\displaystyle  F\text{--}\M(k) \simeq \prod_{n \text{ odd}} k[t]\text{--modules}$,
the modules $k[t]t^i$ and $k[t]t^i/k[t]t^{i+j+1}$ in the $n$th component of the product will correspond to the $F$--modules $N(2^in, \infty)$ and $N(2^in, j)$.

The proofs of the theorem and proposition when $p$ is odd is similar.  Now one has a decomposition of $F$--modules
$$ N \simeq \bigoplus_{m \text{ with } p\nmid m} N(2m) \oplus \bigoplus_{m} N_{2m+1}$$
where $N(2m) = \bigoplus_{i=0}^{\infty} N_{2p^im}$.

As before, this leads to an equivalence of abelian categories
$$ F\text{--}\M(k) \simeq \prod_{m \text{ with } p\nmid m} k[t]\text{--modules} \times \prod_m k\text{--vector spaces}.$$
The modules $k[t]t^i$ and $k[t]t^i/k[t]t^{i+j+1}$ in the $m$th component of the first infinite product will correspond to the $F$--modules $N(2p^im, \infty)$ and $N(2p^im, j)$, while the vector space $k$ in the $m$th component of the second infinite product will correspond to the $F$--module $N(2m+1,0)$.

\subsection{The classification of $V$--modules when $k$ is perfect}

The results of the last subsection give us results about $V$--modules using duality.

\begin{defn} We define indecomposable $V$--modules $M(n,j)$ as follows. \\

\noindent{\bf (a)} Let $p=2$. For every $n \geq 1$ and $0 \leq j \leq \infty$, we let $M(n,j)$ have a basis $x_0, \dots, x_j$ with $|x_i|=2^in$ and with $V(x_{i+1})=x_{i}$ for $i\geq 0$. \\

\noindent{\bf (b)} Let $p$ be odd.  For every $m \geq 1$ and $0 \leq j \leq \infty$, we let
$M(2m,j)$ have a basis  $y_0, \dots, y_j$ with $|y_i|=2p^im$ and with $V(y_{i+1})=y_{i}$ for $i\geq 0$. For every $m \geq 0$, we let $M(2m+1,0)$ be one dimensional with generator in degree $2m+1$.
\end{defn}

\begin{lem} \label{Phi lem}  There are isomorphisms of $V$--modules as follows. \\

\noindent{\bf (a)} $\Phi M(n,j) \simeq M(pn,j)$ when $p=2$, and when $p$ is odd and $n$ is even. \\

\noindent{\bf (b)} $\displaystyle \colim_{j\ra \infty} M(n,j) \simeq M(n, \infty)$.
\end{lem}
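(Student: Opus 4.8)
The plan is to treat the two parts separately, disposing of (b) as a routine degreewise colimit computation and concentrating the real work on the Frobenius--twist bookkeeping in (a).

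For (b) I would first record that, in the cases where the whole family is defined (that is, $p=2$ with any $n$, or $p$ odd with $n$ even), the evident maps $M(n,j) \to M(n,j+1)$ sending each basis element $x_i$ to $x_i$ (respectively $y_i$ to $y_i$ when $p$ is odd) are morphisms of $V$--modules, since they manifestly commute with $V$; these assemble into a directed system. The key formal point is that filtered colimits in $V$--$\M(k)$ are created by the forgetful functor to $\M(k)$, and so are computed degreewise; this is immediate from the exactness of $\Phi$ noted earlier, which lets $V$ pass to the colimit. In each degree $2^in$ the system is eventually constant with value the line spanned by $x_i$, so the colimit is one--dimensional in each such degree (and zero elsewhere), of finite type, with $V(x_{i+1}) = x_i$. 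This is by definition $M(n,\infty)$, giving (b).

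For (a) I would exhibit the isomorphism degree by degree. Take $p=2$ and write $M = M(n,j)$, so that $(\Phi M)_{2^{i+1}n} = M_{2^in}^{\xi}$ is the line spanned by $x_i^{\xi}$, while $M(2n,j)$ has basis $x_i'$ with $|x_i'| = 2^{i+1}n$. Since every graded piece in sight is one--dimensional, the degreewise assignment $f(x_i^{\xi}) = x_i'$ defines an isomorphism in $\M(k)$, and the identical recipe $y_i^{\xi} \mapsto y_i'$ works when $p$ is odd and $n=2m$ is even. It then remains only to check that $f$ intertwines the $V$--module structures, where $\Phi M$ carries the structure $\Phi(V_M)\colon \Phi M \to \Phi^2 M$ obtained by applying the functor $\Phi$ to the structure map of $M$. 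Concretely, I would verify that in degree $2^{i+1}n$ both $V_{M(2n,j)} \circ f$ and $\Phi(f)\circ \Phi(V_M)$ send $x_i^{\xi}$ to $(x_{i-1}')^{\xi}$, with the boundary case $i=0$ giving $0$ on both sides because each source module vanishes below degree $2n$.

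The main obstacle is precisely this last verification: keeping track of the iterated twists $\xi$ and $\xi\xi$ as $\Phi$ is applied to the structure maps, and confirming that the semilinear relations $V(x_{i+1})=x_i$ on the two sides match under the degreewise identification; everything else is formal. I note that no perfectness hypothesis is needed for the explicit map itself, since it only identifies one--dimensional pieces. Alternatively, one can deduce (a) by duality from the $F$--module statement $\Phi N(n,j) \simeq N(pn,j)$, which under the equivalence $F$--$\M(k) \simeq \prod_{n} k[t]\text{--modules}$ of \thmref{F module class thm} is just multiplication by $t$ (the degree shift), combined with the natural isomorphisms $\Phi(M^{\vee})\simeq(\Phi M)^{\vee}$ and $M(n,j)\simeq N(n,j)^{\vee}$; this conceptual route explains why the lemma holds, at the cost of invoking perfectness through the regrading.
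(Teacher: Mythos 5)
The paper offers no proof of this lemma at all: it is stated as an immediate consequence of the definitions, with part (a) implicitly justified by the duality remark opening the subsection. So your degreewise verification of (a) and the eventually-constant colimit computation in (b) are exactly the routine checks the paper leaves to the reader; both are correct under the standing hypothesis that $k$ is perfect, and your alternative route for (a) — dualizing $\Phi N(n,j) \simeq N(pn,j)$ via $M(n,j) \simeq N(n,j)^{\vee}$ and $\Phi(M^{\vee}) \simeq (\Phi M)^{\vee}$ — is the one the paper's framing suggests.

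One genuine error, though it is confined to an aside: your claim that ``no perfectness hypothesis is needed for the explicit map itself, since it only identifies one--dimensional pieces'' is false. With the paper's definition of the twist (restriction of scalars along Frobenius, $\lambda \cdot x^{\xi} = (\lambda^p x)^{\xi}$), the $k$--span of $x_i^{\xi}$ inside $M_{2^in}^{\xi}$ consists only of the vectors $(\mu x_i)^{\xi}$ with $\mu \in k^p$; in general $\dim_k W^{\xi} = [k:k^p]\cdot \dim_k W$. So over a non-perfect field the graded pieces of $\Phi M(n,j)$ are not lines, the assignment $x_i^{\xi} \mapsto x_i'$ does not extend to an isomorphism, and statement (a) itself fails for dimension reasons. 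Perfectness is used at precisely this point — it is what makes $x_i^{\xi}$ a basis of $M_{2^in}^{\xi}$. Part (b), by contrast, really does hold over any field of characteristic $p$, since no twist intervenes in comparing the colimit with $M(n,\infty)$.
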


\thmref{F module class thm} and duality implies the next theorem.

\begin{thm} \label{V module class thm}  If $k$ is a perfect field of characteristic $p$, then every $V$--module can be written uniquely as a direct sum of $V$--modules of the form $M(n,j)$.
\end{thm}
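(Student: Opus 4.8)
The plan is to transport the classification of $F$--modules (\thmref{F module class thm}) across the levelwise duality equivalence, so the first step is to record that duality interchanges $F$--modules and $V$--modules. Recall from the discussion of $\M(k)$ that $(\text{\hspace{.1in}})^{\vee}$ is an equivalence of symmetric $k$--linear tensor categories, and that there is a natural isomorphism $\Phi(M^{\vee}) \simeq (\Phi M)^{\vee}$. Thus if $M$ is an $F$--module with structure map $F: \Phi M \ra M$, dualizing yields
$$ M^{\vee} \xra{F^{\vee}} (\Phi M)^{\vee} \simeq \Phi(M^{\vee}),$$
which is exactly the data of a $V$--module structure on $M^{\vee}$. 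Since $(\text{\hspace{.1in}})^{\vee}$ is an involution up to natural isomorphism, this construction is inverse to the analogous one sending a $V$--module to an $F$--module, and I would conclude that duality induces an equivalence of abelian categories $F\text{--}\M(k)^{op} \simeq V\text{--}\M(k)$.

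The second step is to identify the duals of the indecomposables, namely to check that $N(n,j)^{\vee} \simeq M(n,j)$. If $x_0, \dots, x_j$ is the basis of $N(n,j)$ with $F(x_i) = x_{i+1}$, then the dual basis $x_0^*, \dots, x_j^*$ lives in the same degrees, and the dual of $F$ satisfies $V(x_{i+1}^*) = x_i^*$, which is precisely the defining relation of $M(n,j)$; when $p$ is odd the one--dimensional odd--degree summands $N(2m+1,0)$ are self--dual. The degree bookkeeping is handled by the isomorphism $\Phi(M^{\vee}) \simeq (\Phi M)^{\vee}$, and it may be convenient to invoke \lemref{Phi lem}(a) to match up the Frobenius twists.

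With these two observations the theorem follows formally. Given a $V$--module $M$, its dual $M^{\vee}$ is an $F$--module, which by \thmref{F module class thm} decomposes uniquely as $\bigoplus_a N(n_a, j_a)$. Applying $(\text{\hspace{.1in}})^{\vee}$ and using $M \simeq (M^{\vee})^{\vee}$ together with $N(n,j)^{\vee} \simeq M(n,j)$ gives $M \simeq \bigoplus_a M(n_a, j_a)$, while uniqueness of this decomposition is inherited from the uniqueness for $F$--modules via the equivalence.

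The one point requiring care -- and the closest thing to an obstacle -- is the interaction of duality with direct sums. Because every object of $\M(k)$ is of finite type, each graded piece of a direct sum decomposition involves only finitely many summands, so $(\text{\hspace{.1in}})^{\vee}$ commutes with the (locally finite) direct sums appearing in the classification; without the finite--type hypothesis the dual of an infinite direct sum would become a product and the argument would fail. I would state this explicitly, as it is what guarantees both existence of the decomposition and the transfer of uniqueness.
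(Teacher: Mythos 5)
Your proposal is correct and takes essentially the same route as the paper, whose entire proof is the one-line assertion that \thmref{F module class thm} and duality imply the result. Your write-up simply makes that explicit: duality exchanges $F$-- and $V$--structures via $\Phi(M^{\vee}) \simeq (\Phi M)^{\vee}$, identifies $N(n,j)^{\vee} \simeq M(n,j)$, and uses the finite type hypothesis to ensure duality respects the (locally finite) direct sums.
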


Similarly, \propref{F module prop} implies the next proposition.

\begin{prop} \label{V module prop}  If $k$ is perfect, indecomposable projectives and injectives in $V$--$\M(k)$ are as follows. \\

\noindent{\bf (a)} If $p=2$, the $V$--modules $M(n,\infty)$ are injective. The $V$--modules $M(n,j)$ with $n$ odd are projective. \\

\noindent{\bf (b)} If $p$ is odd, the $V$--modules $M(2m,\infty)$ and $M(2m+1,0)$ are injective. The $V$--modules $M(2m,j)$ with $p \nmid m$ and $M(2m+1,0)$ are projective.
\end{prop}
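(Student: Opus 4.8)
The plan is to deduce \propref{V module prop} from \propref{F module prop} purely by duality, exactly as the remark preceding the statement suggests; the work lies in making that deduction precise and in tracking how the indecomposables transform.

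First I would set up the contravariant equivalence carefully. The duality functor $(\text{\hspace{.1in}})^\vee : \M(k)^{op} \to \M(k)$ is an equivalence of symmetric tensor categories, and since $\Phi(M^\vee) \simeq (\Phi M)^\vee$ naturally, dualizing an $\M(k)$--morphism $F : \Phi M \to M$ produces an $\M(k)$--morphism $M^\vee \to (\Phi M)^\vee \simeq \Phi(M^\vee)$, i.e. a Verschiebung on $M^\vee$. Checking that this assignment is compatible with morphisms---which it is, because the commuting squares defining $F$--module maps and $V$--module maps are interchanged under $(\text{\hspace{.1in}})^\vee$---yields an equivalence of abelian categories
$$ (\text{\hspace{.1in}})^\vee : F\text{--}\M(k)^{op} \xra{\sim} V\text{--}\M(k). $$

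Next, since the projective objects of $F\text{--}\M(k)^{op}$ are precisely the injective objects of $F\text{--}\M(k)$ (and conversely), this equivalence carries injective $F$--modules to projective $V$--modules and projective $F$--modules to injective $V$--modules. Thus \propref{V module prop} will follow once I identify the dual of each indecomposable. The second step is therefore to verify $N(n,j)^\vee \simeq M(n,j)$ for all $n$ and all $0 \le j \le \infty$. This is a direct computation with dual bases: if $x_0,\dots,x_j$ is the basis of $N(n,j)$ with $F(x_i) = x_{i+1}$, then the dual basis $x_0^\vee,\dots,x_j^\vee$ sits in the same degrees, and the dual of $F$ sends $x_{i+1}^\vee \mapsto x_i^\vee$, which is exactly the Verschiebung defining $M(n,j)$. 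The finite type hypothesis guarantees this works even when $j = \infty$, since each graded piece is finite dimensional and $(\text{\hspace{.1in}})^{\vee\vee} \simeq \Id$.

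With $N(n,j)^\vee \simeq M(n,j)$ in hand, translating \propref{F module prop} term by term finishes the argument: for $p = 2$, the projective $N(n,\infty)$ dualizes to the injective $M(n,\infty)$, and the injective $N(n,j)$ with $n$ odd dualizes to the projective $M(n,j)$ with $n$ odd; for $p$ odd, the projectives $N(2m,\infty)$ and $N(2m+1,0)$ and the injectives $N(2m,j)$ with $p \nmid m$ and $N(2m+1,0)$ go over to exactly the injective and projective lists of \propref{V module prop}(b). I expect no genuine obstacle: the only points demanding care are the bookkeeping for $j = \infty$ under duality and confirming that the projective/injective swap is indeed what the anti-equivalence does---both routine once the equivalence $F\text{--}\M(k)^{op} \simeq V\text{--}\M(k)$ is nailed down.
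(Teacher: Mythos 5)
Your proposal is correct and follows essentially the same route as the paper, which deduces \propref{V module prop} from \propref{F module prop} by applying the duality functor $(\text{\hspace{.1in}})^\vee$; you have simply made explicit the anti-equivalence $F\text{--}\M(k)^{op} \simeq V\text{--}\M(k)$, the projective/injective swap, and the identification $N(n,j)^\vee \simeq M(n,j)$ that the paper leaves to the reader.
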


It will be technically useful to also consider the $M(n,j)$ in the category $V$--modules whose underlying graded vector space structure is not necessarily of finite type.

\begin{prop} \label{unbounded proj prop} Let $V$--$\M(k)^u$ be the category of $V$--modules that are not necessarily of finite type. \\

\noindent{\bf (a)} If $p=2$, the $V$--module $M(n,j)$ with $n$ odd and $j<\infty$ are projective generators for $V$--$\M(k)^u$, as there are a natural isomorphisms
$$  \Hom_{V\text{--}\M(k)^u}(M(n,j), M) \simeq M_{2^jn}.$$

\noindent{\bf (b)} If $p$ is odd, $V$--modules $M(2m,j)$ with $p \nmid m$ and $j<\infty$, together with the $V$--modules $M(2m+1,0)$, are projective generators  for $V$--$\M(k)^u$, as there are a natural isomorphisms
$$  \Hom_{V\text{--}\M(k)^u}(M(2m,j), M) \simeq M_{2p^jn},$$
and
$$  \Hom_{V\text{--}\M(k)^u}(M(2m+1,0), M) \simeq M_{2m+1}.$$
\end{prop}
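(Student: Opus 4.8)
The plan is to identify each of the listed $V$--modules as a \emph{free} object on a single generator, from which corepresentability, projectivity, and the generating property all follow formally. The central observation is that, for a fixed degree $d\geq 1$, the free $V$--module on one generator $g$ placed in degree $d$ --- namely the object corepresenting the evaluation functor $M \mapsto M_d$ --- has underlying graded vector space spanned by the iterated Verschiebungs $g, Vg, V^2 g, \dots$, each living one Verschiebung--step lower than the last. Since objects of $V\text{--}\M(k)^u$ are graded over the non--negative integers and $V$ is only defined out of the appropriate (even, for odd $p$) degrees, the chain $g, Vg, V^2 g, \dots$ terminates after finitely many steps, so this free module always has the finite length recorded by one of the $M(n,j)$.

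First I would make this identification precise. For $p=2$ and $d=2^j n$ with $n$ odd, the classes $g, Vg, \dots, V^j g$ sit in degrees $2^j n, \dots, n$, while $V^{j+1} g$ would land in degree $n/2 \notin \Z$; because $V$ is only defined out of even degrees, $V^{j+1} g = 0$ is automatic, with no further relation imposed. Thus the free $V$--module on a degree--$d$ generator is exactly $M(n,j)$, and the universal property of a free object gives the natural isomorphism $\Hom_{V\text{--}\M(k)^u}(M(n,j), M) \simeq M_{2^j n}$, a morphism being the same datum as the image in $M_{2^j n}$ of the top class $V^0 g = x_j$. The analysis for odd $p$ is identical: the hypothesis $p \nmid m$ is precisely what forces $V$ out of the bottom class $y_0$ (in degree $2m$) to vanish, since $2m$ is then not an even multiple of $p$; and an odd degree $2m+1$ supports no Verschiebung at all, giving the one--dimensional free module $M(2m+1,0)$ corepresenting $M \mapsto M_{2m+1}$.

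With corepresentability in hand, projectivity is formal: exact sequences in $V\text{--}\M(k)^u$ are computed degreewise on underlying graded vector spaces with $V$ induced, so each evaluation functor $M \mapsto M_d$ is exact, whence $\Hom_{V\text{--}\M(k)^u}(M(n,j), -)$ is exact and $M(n,j)$ is projective (and similarly for the odd--$p$ modules). To see that the listed modules \emph{generate}, I would use that every degree $d \geq 1$ is the top degree of exactly one free module on the list --- via the $2$--adic (resp.\ $p$--adic) factorization $d = 2^j n$ with $n$ odd (resp.\ the split into odd degrees and $d = 2p^j m$ with $p\nmid m$). Consequently the evaluation functors jointly reflect the zero object, and for any $M$ the sum over all $d$ and all $w \in M_d$ of the corresponding universal morphisms assembles into an epimorphism from a direct sum of these projectives onto $M$. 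This exhibits them as a set of projective generators.

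The only subtle point --- and the step I would be most careful about --- is the automatic vanishing of $V$ below the bottom class, i.e.\ the claim that no relation beyond the grading is needed to define a morphism out of $M(n,j)$. This is exactly where the hypotheses $n$ odd and $p \nmid m$ enter, and it is what distinguishes these modules from, say, $M(2m,j)$ with $p \mid m$, whose bottom class can still receive a nonzero Verschiebung and which therefore fails to be free. Everything else is a direct application of the universal property of free objects together with the degreewise exactness of $V\text{--}\M(k)^u$.
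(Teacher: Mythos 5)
Your proposal is correct and takes essentially the same route as the paper, which offers no argument beyond the corepresentability clause in the statement itself: the listed modules are exactly the free $V$--modules on one generator in top degree (the hypotheses $n$ odd, resp.\ $p \nmid m$ or odd degree, being precisely what makes the Verschiebung chain terminate with no extra relation), giving $\Hom_{V\text{--}\M(k)^u}(M(n,j), M) \simeq M_{2^jn}$ and its odd-$p$ analogues. Your deduction of projectivity from degreewise exactness of evaluation, and of generation from the unique factorization $d = 2^jn$ (resp.\ $d = 2p^jm$ or $d$ odd), is the intended formal completion and is sound.
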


\subsection{The category $\Ho_*(k)$}  We record some useful properties of the category of connected cocommutative Hopf algebras in $\M(k)$.

\begin{prop} \label{Hopf alg properties prop} The category $\Ho_*(k)$ satisfies the following properties. \\

\noindent{\bf (a)} $\Ho_*(k)$ is pointed with initial/terminal object $k$. \\

\noindent{\bf (b)} $\Ho_*(k)$ has pullbacks, and thus kernels and finite products. \\

\noindent{\bf (c)} $\Ho_*(k)$ has coproducts, subject to our finite type condition: a family $\{H_i\}$ in $\Ho_*(k)$, such that, for any $c$, only a finite number of the $\bar H_i$ are not $c$--connected, admits a coproduct $\underset{i}{*}H_i$.
\end{prop}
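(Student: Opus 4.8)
The plan is to establish the three parts separately: (a) by direct inspection, (b) by reducing a pullback to a relative tensor product via the duality $\Ho_*(k)^{op} \simeq \Ho^*(k)$ recorded above, and (c) by constructing free products on underlying algebras. For (a), I would take $k$, regarded as the Hopf algebra concentrated in degree $0$, as the candidate zero object. The unit $\eta \colon k \ra H$ is a morphism of Hopf algebras, and it is the only one, since any such morphism must send $1$ to the unit of $H$. Dually, the counit $\epsilon \colon H \ra k$ is the unique Hopf-algebra morphism to $k$, because any graded, unit-preserving morphism $H \ra k$ vanishes on $\bar H$ and is the identity on $H_0 = k$, hence equals $\epsilon$. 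Thus $k$ is simultaneously initial and terminal, and $\Ho_*(k)$ is pointed.

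For (b) I would first produce finite products. Given $H, L \in \Ho_*(k)$, I claim $H \otimes L$, with projections $\operatorname{id} \otimes \epsilon$ and $\epsilon \otimes \operatorname{id}$, is the categorical product: a pair of morphisms $A \ra H$ and $A \ra L$ pairs to the composite $A \xra{\Delta} A \otimes A \ra H \otimes L$, and this is a morphism of Hopf algebras precisely because $A$ is cocommutative, so that $\Delta$ is itself a coalgebra map. For pullbacks I would dualize: under $\Ho_*(k)^{op} \simeq \Ho^*(k)$ a pullback of a cospan $H \ra K \la L$ becomes a pushout of connected commutative Hopf algebras $A \la C \ra B$, and I would identify this pushout with the relative tensor product $A \otimes_C B$. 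This is already the pushout in commutative algebras; since $C \ra A$ and $C \ra B$ are Hopf maps, the coproduct, counit, and antipode of $A \otimes B$ descend to the quotient, and a Hopf map out of $A \otimes_C B$ is exactly a compatible pair of Hopf maps. It is connected, as $(A \otimes_C B)_0 = k$, and of finite type as a quotient of $A \otimes B$. Dualizing back gives pullbacks in $\Ho_*(k)$; kernels are then the pullbacks of $f \colon H \ra K$ along the unit $k \ra K$, and finite products are pullbacks over the terminal object $k$.

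For (c) I would construct $\underset{i}{*} H_i$ directly. Its underlying augmented algebra is the reduced free product, spanned by the alternating tensors $\bar H_{i_1} \otimes \cdots \otimes \bar H_{i_n}$ with $i_1 \neq \cdots \neq i_n$. I would equip it with a coproduct, counit, and antipode by extending those of the factors through the universal property of the free product of algebras, the targets being the algebras $(\underset{i}{*} H_i) \otimes (\underset{i}{*} H_i)$, $k$, and $\underset{i}{*} H_i$ respectively. Cocommutativity and the remaining bialgebra and antipode identities then hold because, in each case, the two algebra maps in question agree on the generating subalgebras $H_i$ and hence everywhere. The universal property in $\Ho_*(k)$ follows in the same spirit: a Hopf map out of $\underset{i}{*} H_i$ is an algebra map whose restriction to each $H_i$ is a Hopf map, and such a map is automatically comultiplicative by the uniqueness clause of the free-product universal property.

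The main obstacle is the finite-type claim in (c), which is exactly where the connectivity hypothesis enters. In a fixed degree $d$, a basis tensor $\bar H_{i_1} \otimes \cdots \otimes \bar H_{i_n}$ has every factor in positive degree, so $n \leq d$; moreover each index $i_j$ appearing must have $\bar H_{i_j}$ nonzero in some degree $\leq d$, and by hypothesis only finitely many $i$ have this property. Hence degree $d$ receives contributions from only finitely many words built out of finitely many finite-dimensional spaces, so $(\underset{i}{*} H_i)_d$ is finite dimensional. This bookkeeping, rather than any structural difficulty, is the delicate point; the analogous finiteness checks in (b) are easier, since every construction there is a subquotient of a tensor product of finite-type objects.
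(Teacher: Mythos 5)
Your proposal is correct, and it is worth noting that the paper does not really prove this proposition at all: it declares the properties well known, citing \cite{milnor moore}, \cite{moore smith}, and \cite{glm92}, and only sketches two points. For (c) your construction is exactly the paper's sketch --- the free product of underlying graded algebras, with $\Delta$ induced via the algebra maps $H_j \xra{\Delta} H_j \otimes H_j \ra (\underset{i}{*}H_i) \otimes (\underset{i}{*}H_i)$, and with all identities (coassociativity, cocommutativity, the universal property) checked by the ``two algebra maps agreeing on generators'' device --- except that you make explicit the finite-type bookkeeping that the connectivity hypothesis is there to guarantee, which the paper leaves implicit. For (b) your route differs in presentation: the paper invokes the general fact that $\Ho_*(k)$ is the category of group objects in $\C_*(k)$, so that pullbacks are created by the underlying pullbacks of cocommutative coalgebras (with $H_1 \otimes H_2$ as the product, just as you have it), whereas you pass through the duality $\Ho_*(k)^{op} \simeq \Ho^*(k)$ and identify the dual pushout concretely as the relative tensor product $A \otimes_C B$, verifying by hand that the coalgebra structure descends (which does work: for $c \in \bar C$ one checks $\Delta(\alpha(c)\otimes 1 - 1 \otimes \beta(c))$ lies in $I \otimes (A \otimes B) + (A \otimes B) \otimes I$, and commutativity of the target plus the bialgebra axiom make $\nabla_D \circ (f \otimes g)$ a map of Hopf algebras, giving the universal property). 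The two are precisely dual --- a pullback of coalgebras is the dual of a pushout of algebras, and limits of group objects are computed in the underlying category --- so the mathematical content agrees; your version buys a self-contained, explicit construction at the cost of routinely checking that the structure maps descend, while the paper's formulation gets the same conclusion in one line from the group-object perspective. The one place you should be slightly more careful is the uniqueness half of the product claim in (b): that a coalgebra map $A \ra H \otimes L$ is recovered from its two counit projections uses that $H \otimes L$ is already the product in $\C_*(k)$ (cocommutativity of the ambient coalgebras), a point you use implicitly but do not state.
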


These properties are all well-known, and are all implicitly or explicitly in \cite{milnor moore} or \cite{moore smith}. See also \cite[\S 1.1.2]{glm92}

Concerning property (b), $\Ho_*(k)$ is the category of group objects in $\C_*(k)$, the category of connected cocommutative coalgebras in $\M(k)$ \cite[\S 8]{milnor moore}.  It follows that pullbacks in $\Ho_*(k)$ are `induced' from pullbacks in $\C_*(k)$. For example, given $H_1, H_2 \in \Ho_*(k)$, $H_1 \otimes H_2$ is the categorical product.

Concerning (c), given the family $\{H_i\}$ in $\Ho_*(k)$, let $\underset{i}{*}H_i$ be the coproduct of the $H_i$, just viewed as graded algebras.  The family of algebra maps
$$ \Delta_j: H_j \xra{\Delta} H_j \otimes H_j \ra \underset{i}{*}H_i \otimes \underset{i}{*}H_i$$
then defines an algebra map $\Delta: \underset{i}{*}H_i \ra \underset{i}{*}H_i \otimes \underset{i}{*}H_i$ giving $\underset{i}{*}H_i$ the structure of a cocommutative Hopf algebra, and it is straightforward to verify that $\underset{i}{*}H_i$ is the coproduct in $\Ho_*(k)$ of the $H_i$.  Compare with \cite[Proposition 2.4]{moore smith}. \\

The next proposition will be used in the proof of \thmref{split thm}.

\begin{prop}  \label{Hopf alg kernel prop} Given $f: H_1 \ra H_2$ in $\Ho_*(k)$, let $I \subset H_1$ be the kernel of $f$ when $f$ is just viewed as a map of algebras, and let $K \subset H_1$ be the Hopf algebra kernel of $f$.  Then $I = (\bar K)$, the two-sided ideal generated by $\bar K$.
\end{prop}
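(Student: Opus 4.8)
The plan is to prove the two inclusions $(\bar K) \subseteq I$ and $I \subseteq (\bar K)$ separately, the first being formal and the second resting on the structure theory of connected graded Hopf algebras. Recall that, as the pullback of $f$ along the unit $k \ra H_2$ (which exists by \propref{Hopf alg properties prop}(b)), the Hopf kernel is the sub-Hopf-algebra of coinvariants $K = \{x \in H_1 \mid (\Id \otimes f)\Delta(x) = x \otimes 1\}$. Applying $\epsilon \otimes \Id$ to this defining identity for $x \in K$ gives $f(x) = \epsilon(x)\cdot 1$, so that $f$ restricted to $K$ is $\eta\epsilon$; in particular $f(\bar K) = 0$. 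Since $I = \ker(f)$ is a two-sided ideal, it contains the two-sided ideal $(\bar K)$ generated by $\bar K$, which settles the easy inclusion.

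For the reverse inclusion I would first reduce to the case that $f$ is surjective: replacing $H_2$ by the image $f(H_1)$, a sub-Hopf-algebra of $H_2$, changes neither $I$ nor $K$, since the coinvariance condition already takes place in $H_1 \otimes f(H_1)$. With $f \colon H_1 \ora H_2$ surjective, $K$ is a normal sub-Hopf-algebra (as the kernel of a map of Hopf algebras, using cocommutativity so that the left coinvariants form a normal subobject), whence $(\bar K) = \bar K H_1 = H_1 \bar K$ and $H_1/\!/K := H_1/(\bar K)$ is a connected graded quotient Hopf algebra with kernel exactly $(\bar K)$. By the previous paragraph $f$ factors as $H_1 \xra{q} H_1/\!/K \xra{g} H_2$, and since $f$ is onto so is $g$; it then suffices to prove that $g$ is an isomorphism, for then $I = \ker f = \ker q = (\bar K)$.

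To see $g$ is an isomorphism I would compare Poincaré series $P_M(t) = \sum_n \dim_k(M_n)\, t^n$, using the finite type hypothesis. The essential inputs are two freeness statements from the structure theory of connected graded Hopf algebras over a field (Milnor--Moore \cite{milnor moore}): first, $H_1$ is free as a left $K$-module, so that $P_{H_1} = P_K \cdot P_{H_1/\!/K}$; and second, dually, $H_1$ is cofree (injective) as an $H_2$-comodule with coinvariants $K$, so that $P_{H_1} = P_K \cdot P_{H_2}$. (The second is the first applied to the sub-Hopf-algebra $H_2^{\vee} \subseteq H_1^{\vee}$ and then dualized, using $\Ho_*(k)^{op} \simeq \Ho^*(k)$ together with the identification $(H_1^{\vee}/\!/H_2^{\vee})^{\vee} \simeq K$.) Since $P_K$ has constant term $1$ and is therefore invertible as a power series, these two identities give $P_{H_1/\!/K} = P_{H_2}$. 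A surjection of finite type graded vector spaces whose Poincaré series agree in every degree is an isomorphism, so $g$ is an isomorphism and the proof is complete.

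I expect the main obstacle to be exactly these two multiplicativity statements for the Poincaré series: they encode that the extension $K \ra H_1 \ra H_2$ is faithfully flat, a property that can fail for general Hopf algebras and holds here only because $H_1$ is connected and of finite type. Establishing the freeness of $H_1$ over the sub-Hopf-algebra $K$, together with the normality of $K$ (needed to identify the left ideal $\bar K H_1$ with the two-sided Hopf ideal $(\bar K)$), is the crux of the argument; the reduction to the surjective case and the final dimension count are then routine bookkeeping.
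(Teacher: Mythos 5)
Your argument is correct, but the comparison here is a bit unusual: the paper itself gives no proof of this proposition, deferring entirely to Wilkerson \cite[Proposition 1.3]{wilkerson} and remarking that the result is essentially \cite[Proposition 4.9]{milnor moore}. What you have written is, in substance, the standard argument underlying those citations, worked out from scratch. Your easy inclusion $(\bar K) \subseteq I$ is right once the Hopf kernel is identified with the coinvariants $\{x \in H_1 : (\Id \otimes f)\Delta(x) = x \otimes 1\}$, and this is indeed what the pullback along the unit produces in $\Ho_*(k)$. The reverse inclusion then rests, as you say, on the Milnor--Moore freeness theorem (a connected graded Hopf algebra over a field is free as a module over any sub-Hopf-algebra), applied twice: once to $K \subseteq H_1$ to get $P_{H_1} = P_K \cdot P_{H_1/\!/K}$, and once, after dualizing via $\Ho_*(k)^{op} \simeq \Ho^*(k)$ (legitimate because of the finite type hypothesis built into $\Ho_*(k)$), to $H_2^{\vee} \subseteq H_1^{\vee}$ to get $P_{H_1} = P_K \cdot P_{H_2}$. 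The supporting details you leave implicit all check out: the image $f(H_1)$ is a sub-Hopf-algebra, so the reduction to surjective $f$ is harmless; the Hopf kernel is stable under the adjoint action (this in fact needs no surjectivity), which yields $(\bar K) = \bar K H_1 = H_1 \bar K$ and makes $H_1/\!/K$ a quotient Hopf algebra; and cocommutativity of $H_1$ enters exactly where you flag it, to identify left with right coinvariants and to identify $(H_1^{\vee}/\!/H_2^{\vee})^{\vee}$ with $K$. The one step I would insist you write out in a final version is that last identification --- dualizing the ideal $\overline{H_2^{\vee}}\,H_1^{\vee}$ and checking that its annihilator in $H_1$ is precisely the coinvariant sub-Hopf-algebra $K$ --- since it is the hinge connecting your two Poincar\'e series identities; it is straightforward linear algebra but not purely formal.
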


This is Proposition 1.3 of \cite{wilkerson}, and Wilkerson notes that this is essentially \cite[Proposition 4.9]{milnor moore}.  (Versions of this are also proved in the nongraded setting in \cite[Theorem 3.1]{newman75} and \cite[Lemma 16.0.2]{sweedler}. Neither of these papers cite \cite{milnor moore}; indeed, Sweedler's book has no references at all.) \\

Finally we note a couple of easily verified results about the behavior of $\Phi$ and $Q$.  Regarding the first of these, it is illuminating to note that $Q: \Ho_*(k) \ra V\text{--}\M(k)$ has a right adjoint \cite[Proposition 1.1.2.4]{glm92}.

\begin{prop} \label{Q prop}  Let $k$ be a field of positive characteristic. \\

\noindent{\bf (a)} The natural $V$--module map $\bigoplus_i Q(H_i) \ra Q(\underset{i}{*} H_i)$ is an isomorphism. \\

\noindent{\bf (b)} If $H \in \Ho_*(k)$ then so is $\Phi H$, and the natural map $V: H \ra \Phi H$ is a map of Hopf algebras. \\

\noindent{\bf (c)} There is a natural $V$--module isomorphism $Q(\Phi H) \simeq \Phi (QH)$.
\end{prop}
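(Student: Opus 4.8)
The three parts are of increasing subtlety, so I would treat them separately. For part (a), the plan is to observe that the indecomposables functor is a left adjoint at the level of augmented algebras and hence preserves coproducts. Concretely, sending a reduced $W \in \M(k)$ to the square--zero algebra $k \oplus W$ (with $\bar W \cdot \bar W = 0$) is right adjoint to $Q$: an algebra map $H \ra k \oplus W$ is the same datum as a linear map $\bar H \ra W$ killing $\bar H^2$, i.e. a map $QH \ra W$. Thus $Q$ preserves all colimits, in particular coproducts. Since the underlying algebra of the free product $\underset{i}{*} H_i$ is the coproduct of the underlying algebras of the $H_i$ (\propref{Hopf alg properties prop}(c)), I get a natural isomorphism of graded vector spaces $\bigoplus_i Q(H_i) \xra{\sim} Q(\underset{i}{*} H_i)$. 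Finally, each structure map $Q(H_i) \ra Q(\underset{i}{*} H_i)$ is induced by the Hopf algebra map $H_i \ra \underset{i}{*} H_i$, hence is a map of $V$--modules, and naturality of the $V$--module structure on $Q$ forces the isomorphism to be one of $V$--modules.

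For part (b), I would invoke the earlier lemma that $\Phi$ is a functor of symmetric tensor categories. Because $\Phi$ is symmetric monoidal and exact, applying it to the unit, counit, product, coproduct and antipode of $H$ and transporting along the coherence isomorphisms $\Phi(H \otimes H) \simeq \Phi H \otimes \Phi H$ endows $\Phi H$ with the structure of a cocommutative Hopf algebra; it is connected since $(\Phi H)_0 = H_0^{\xi} = k$. The Verschiebung $V: H \ra \Phi H$ is by definition the composite of the $p$--fold diagonal $H \ra (H^{\otimes p})^{\Sigma_p}$ with the surjection $(H^{\otimes p})^{\Sigma_p} \ora \Phi H$ of \lemref{Tate lemma}. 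To see that $V$ lies in $\Ho_*(k)$, I would check separately that it respects product and coproduct. The $p$--fold diagonal is both an algebra and a coalgebra map into $H^{\otimes p}$ (the former because $\Delta$ is multiplicative, the latter because it is iterated comultiplication), landing in the invariants by cocommutativity. The real content is that the natural surjection $(H^{\otimes p})^{\Sigma_p} \ora \Phi H$ is compatible with both structures; this amounts to checking that the isomorphisms of \lemref{Tate lemma} are monoidal natural transformations, i.e. that the norm map $N_{\Sigma_p}$ intertwines the componentwise $\Sigma_p$--equivariant product and coproduct on $H^{\otimes p}$ with those on $\Phi H$. This compatibility of the norm construction with the tensor structure is the main obstacle, and is essentially the point treated in \cite[\S 1.1]{glm92}.

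For part (c), the plan is to note that $\Phi$ commutes with the formation of indecomposables. Writing $QH = \coker(\nabla: \bar H \otimes \bar H \ra \bar H)$ and using that $\Phi$ is exact, additive, and monoidal, one has $\overline{\Phi H} = \Phi \bar H$ (from $\Phi k = k$) and, since the product on $\Phi H$ is $\Phi(\nabla)$ under the coherence isomorphism, $(\overline{\Phi H})^2 = \Phi(\bar H^2)$, as $\Phi$ preserves images. Hence
$$ Q(\Phi H) = \Phi \bar H / \Phi(\bar H^2) \simeq \Phi(\bar H/\bar H^2) = \Phi(QH), $$
the middle isomorphism being right exactness of $\Phi$. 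This chain is natural in $H$, and combined with part (b) (which identifies the Verschiebung carried by $\Phi H$) the resulting isomorphism respects the $V$--module structures, yielding the asserted natural $V$--module isomorphism $Q(\Phi H) \simeq \Phi(QH)$.
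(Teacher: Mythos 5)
Your proposal is sound, and it is worth noting that the paper gives essentially no proof of this proposition: it is introduced as a pair of ``easily verified'' results, the only hint being the remark that $Q: \Ho_*(k) \ra V\text{--}\M(k)$ has a right adjoint (citing \cite[Proposition 1.1.2.4]{glm92}), so that part (a) follows from preservation of coproducts. Your argument for (a) takes a mildly different and more elementary route: instead of the Hopf--level adjunction borrowed from \cite{glm92}, you use the square--zero extension adjunction at the level of connected graded algebras, combined with the fact (explicit in the paper's construction for \propref{Hopf alg properties prop}(c)) that the underlying algebra of $\underset{i}{*}H_i$ is the algebra coproduct; this buys independence from \cite{glm92} at the price of invoking that compatibility of the two coproducts. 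For (b) and (c) you supply arguments the paper omits entirely, and you correctly locate where the real content of (b) sits: the compatibility of the cokernel--of--norm identification in \lemref{Tate lemma} with the multiplicative and comultiplicative structures, which you defer to \cite[\S 1.1]{glm92} just as the paper implicitly does. The one point to tighten is the last sentence of (c): the compatibility of your isomorphism with the Verschiebung maps is not a purely formal naturality statement, since the $V$--module structure on $Q(\Phi H)$ comes from the Tate--construction Verschiebung of the Hopf algebra $\Phi H$ while that on $\Phi(QH)$ is $\Phi$ applied to the Verschiebung of $H$; one needs the identity $V_{\Phi H} = \Phi(V_H)$, which is the same monoidal compatibility of the Tate construction you already flagged in (b). Since you have isolated exactly that check, this is a presentational rather than a substantive gap.
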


\section{Free split Hopf algebras, and the proof of Theorems \ref{split thm} and \ref{lifting thm}} \label{split thm proof sec}

Thoughout this section, let $k$ be a perfect field of characteristic $p$.

\subsection{The work of Goerss, Lannes, and Morel}

Let $\Ho_*(k)^u$ be the category of connected cocommutative Hopf algebras in graded $k$--vector spaces which are not necessarily of finite type.

When $k$ is the prime field $\F_p$, \cite[Th\'eor\`eme 1.2.1]{glm92} says the following.

\begin{thm} \label{glm thm}  Let $M$ be a projective object in $V$--$\M(k)^u$.  \\

\noindent{\bf (a)} The induced $V$--module structure on the free associative algebra $T(M) = \bigoplus_{k=0}^{\infty} M^{\otimes k}$ extends to a cocommutative Hopf algebra structure,  and the resulting Hopf algebra will be a projective object in $\Ho_*(k)^u$.  The Hopf algebras arising from any two such extensions are isomorphic. \\

\noindent{\bf (b)} If $P$ is a projective object in $\Ho_*(k)^u$, then $QP$ will be projective in $V$--$\M(k)^u$, and $P$ will be free as an algebra.
\end{thm}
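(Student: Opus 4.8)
The plan is to reduce to an explicit construction on the projective generators of $V\text{--}\M(k)^u$ furnished by \propref{unbounded proj prop}, and then to assemble the general case from the classification in \thmref{V module class thm}. Throughout I would use that, for any $H \in \Ho_*(k)^u$, the Hopf--theoretic Verschiebung $V_H \colon H \ra \Phi H$ of \propref{Q prop} is computed by the iterated coproduct $\bar\Delta^{(p-1)} \colon \bar H \ra (\bar H^{\otimes p})^{\Sigma_p}$ (landing in the invariants by cocommutativity) followed by the projection $(\bar H^{\otimes p})^{\Sigma_p} \ora \Phi \bar H$ of \lemref{Tate lemma}, and that $V_H$ descends to the prescribed $V$--module structure on $QH$.

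For (a), I would first build $H(M(n,j))$ for a single projective generator $M(n,j)$ with $j < \infty$. Take the free associative algebra on a basis $x_0, \dots, x_j$ of $M(n,j)$, and prescribe a coproduct whose reduced $p$--fold part carries $x_{i+1}$ to a symmetrization of $x_i^{\otimes p}$, so that the formula above yields $V(x_{i+1}) = x_i$ on indecomposables, matching the $V$--module structure of $M(n,j)$. This is forced to be nontrivial: making the $x_i$ primitive would give $V = 0$ on $Q$ (compare $H_1$ in \exref{bad example}). These are exactly the non--commutative Witt vector Hopf algebras, and verifying coassociativity and cocommutativity requires Witt--polynomial correction terms in the coproduct. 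For $j = \infty$ I would set $H(M(n,\infty)) = \colim_j H(M(n,j))$, matching $M(n,\infty) = \colim_j M(n,j)$ from \lemref{Phi lem}.

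Still for (a), for a general projective $M$ I would use \thmref{V module class thm} together with \propref{V module prop} to write $M$ as a direct sum of projective indecomposables $M(n_i,j_i)$, and set $H(M) = \underset{i}{*} H(M(n_i,j_i))$. This is free as an algebra since a free product of free algebras is free, and $QH(M) \simeq M$ by \propref{Q prop}(a). For projectivity in $\Ho_*(k)^u$, I would check that each generator $H(M(n,j))$ corepresents the functor $P \mapsto \Hom_{V\text{--}\M}(M(n,j), QP) \simeq (QP)_d$; this is exact because $Q$ carries surjections to surjections (it is the right--exact functor $\bar H \mapsto \bar H/\bar H^2$) and evaluation in a fixed degree is exact, so $H(M(n,j))$ is projective, and coproducts together with the filtered colimits defining the $j = \infty$ cases preserve projectivity. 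Uniqueness of the extension follows by lifting: two such Hopf algebras are both free on $M$ inducing the same $V$--structure on $Q$, and a section of $\bar H \ra QH \simeq M$ (available since $M$ is projective) produces a Hopf map between them inducing the identity on $Q$, which is an isomorphism by an associated--graded comparison of two free algebras of the same size.

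For (b), let $P$ be projective in $\Ho_*(k)^u$. I would first deduce that $QP$ is projective from the adjunction $Q \dashv R$ of \propref{Q prop}: the isomorphism $\Hom_{V\text{--}\M}(QP, -) \simeq \Hom_{\Ho}(P, R(-))$ exhibits $\Hom_{V\text{--}\M}(QP, -)$ as exact, provided $R$ carries surjections to surjections. Then I would form $H(QP)$ by part (a) and lift the identity of $QP$, along a section of $\bar P \ra QP$, to a Hopf map $\phi \colon H(QP) \ra P$ with $Q\phi$ an isomorphism; since indecomposables generate, $\phi$ is surjective. Projectivity of $P$ gives a section $s$ of $\phi$, and then $s\phi$ is a Hopf endomorphism of $H(QP)$ inducing the identity on $Q$, hence an isomorphism on associated graded and so an isomorphism; being idempotent it is the identity, forcing $\phi$ to be an isomorphism. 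Therefore $P \simeq H(QP)$ is free as an algebra. The main obstacle throughout is the explicit Witt--vector coproduct of the second paragraph, including the passage to $j = \infty$ while preserving projectivity; the remaining steps are formal consequences of the material in \secref{background section}.
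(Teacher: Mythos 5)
Your proposal takes a fundamentally different route from the paper, and the route has fatal gaps. First, on the comparison: the paper does not prove this theorem at all. It is quoted verbatim from \cite[Th\'eor\`eme 1.2.1]{glm92} for $k=\F_p$, and the paper's only argument is the extension to a general perfect field: $k\otimes_{\F_p}(-)$ is left adjoint to the forgetful functors on $\Ho_*(\F_p)^u$ and $V\text{--}\M(\F_p)^u$, hence preserves projectives, and carries projective generators to projective generators. You instead undertake to rebuild the GLM construction, and the two steps you declare to be formal are exactly where the content lies. Your corepresentability claim is false: it is not true that $\Hom_{\Ho_*(k)^u}(H(M(n,j)),P)\simeq\Hom_{V\text{--}\M(k)^u}(M(n,j),QP)$. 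Over $\F_2$, take $H(1,1)=T(x,y)$ with $\Delta(y)=y\otimes 1+x\otimes x+1\otimes y$, and $P=H(1,0)=T(a)$ with $a$ primitive of degree $1$. Since $a^2$ is primitive in characteristic $2$, the assignment $x\mapsto 0$, $y\mapsto a^2$ is a nonzero Hopf algebra map, yet it induces $0$ on indecomposables, while $\Hom_{V\text{--}\M(k)^u}(M(1,1),QP)\simeq(QP)_2=0$. So Hopf maps out of $H(n,j)$ are not classified by their effect on $Q$, the functor $P\mapsto\Hom_{\Ho}(H(n,j),P)$ is not the exact functor $P\mapsto (QP)_d$, and your projectivity argument collapses; any correct proof of projectivity must engage with genuine Hopf-algebra-theoretic input (compare how the paper's proof of \thmref{lifting thm} has to invoke \propref{Hopf alg kernel prop}). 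The same conflation of algebra maps with Hopf maps breaks your uniqueness argument: a $V$--module section of $\bar H\ra QH\simeq M$ determines an algebra map $T(M)\ra H$, but nothing guarantees it respects coproducts --- that is precisely the nontrivial content of \lemref{glm lem}, which is part of what is being proved and so cannot be assumed.

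Second, the construction you defer --- the coproduct on $T(M(n,j))$ with its ``Witt-polynomial correction terms,'' together with coassociativity, cocommutativity, and the identification of the induced Verschiebung --- is the heart of the theorem, so even where the proposal is not wrong it is hollow at its core. Three further problems. Your claim that the filtered colimits handling $j=\infty$ preserve projectivity is false in the unbounded category: $M(n,\infty)$ is not projective in $V\text{--}\M(k)^u$ (as the paper notes in the proof of \adref{thm addendum}), so by part (b) the colimit $H(n,\infty)$ cannot be projective in $\Ho_*(k)^u$; this case is actually irrelevant to the present theorem (a projective object of $V\text{--}\M(k)^u$ has no $M(n,\infty)$ summands) and belongs to \adref{thm addendum}, where the paper needs a separate finiteness argument. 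You also invoke \thmref{V module class thm} to decompose an arbitrary projective $M\in V\text{--}\M(k)^u$ into indecomposables, but that classification is stated and proved only for finite-type modules; in the unbounded category one knows a priori only that a projective is a retract of a direct sum of the generators of \propref{unbounded proj prop}. Finally, your part (b) rests on the unverified proviso that the right adjoint of $Q$ preserves surjections. The pieces of your argument that do work --- the free-product assembly via \propref{Q prop}(a), and the concluding retraction argument showing $P\simeq H(QP)$ once part (a) and \lemref{tensor lem} are in hand --- are sound but are not where the difficulty of the theorem lives.
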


In the proof of this, the last statement of part (a) and the last part of part (b) follow from \cite[Lemme 1.2.3.3]{glm92} and  \cite[Lemme 1.2.3.2]{glm92} which read as follows.

\begin{lem} \label{glm lem}  Let $P$ be a projective object in $\Ho_*(k)^u$.  Given $H \in \Ho_*(k)^u$, and a $V$--module map $g: QP \ra QH$, there exists a morphism of Hopf algebras $G: P \ra H$ such that $QG = g$.
\end{lem}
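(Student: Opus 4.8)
The plan is to prove the sharper statement that the map
$\Hom_{\Ho_*(k)^u}(P,H) \ra \Hom_{V\text{-}\M(k)^u}(QP,QH)$, $G \mapsto QG$, is surjective, and to realize the lift $G$ as a section produced by the projectivity of $P$. The mechanism is the right adjoint $R$ to $Q$ (\propref{Q prop} and the cited \cite[Proposition 1.1.2.4]{glm92}). Let $\hat g: P \ra R(QH)$ be the adjoint transpose of $g$, let $\eta_H: H \ra R(QH)$ be the unit, and form the pullback $W = P \times_{R(QH)} H$ in $\Ho_*(k)^u$, with projections $\pi_P: W \ra P$ and $\pi_H: W \ra H$. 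Granting for the moment that $\pi_P$ is an epimorphism, projectivity of $P$ yields a section $\sigma: P \ra W$ of $\pi_P$, and I would set $G := \pi_H \circ \sigma$.

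The end of the argument is then clean and purely formal. From the pullback relation $\eta_H \circ \pi_H = \hat g \circ \pi_P$ and $\pi_P \circ \sigma = \mathrm{id}_P$ one gets $\eta_H \circ G = \hat g$. Applying $Q$ and composing with the counit $\epsilon: QR(QH) \ra QH$, the triangle identity $\epsilon \circ Q\eta_H = \mathrm{id}_{QH}$ together with the defining relation $\epsilon \circ Q\hat g = g$ of the transpose gives $QG = \epsilon \circ Q\eta_H \circ QG = \epsilon \circ Q\hat g = g$, as required. A key preliminary input I would extract first is that, since $P$ is projective, \thmref{glm thm}(b) shows $QP$ is a projective $V$--module and $P$ is free as an algebra; hence the $V$--module surjection $\bar H \ra QH$ can be split along $g$, producing a lift $\tilde g: QP \ra \bar H$ in $V\text{-}\M(k)^u$ (using that the $M(n,j)$ are projective generators, \propref{unbounded proj prop}). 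This lift is what feeds the surjectivity of $\pi_P$.

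The main obstacle is exactly the epimorphism claim for $\pi_P$. Since a morphism in $\Ho_*(k)^u$ is surjective once it is surjective on indecomposables (the Milnor--Moore generation argument, compatible with \propref{Hopf alg kernel prop}), it suffices to show $Q\pi_P$ is surjective; but $Q$ is only right-exact, so this does not follow from abstract nonsense and is where the real work of \cite{glm92} lives. I would reduce to the free case $P = H(M)$ with $M$ a projective $V$--module (legitimate here, as \thmref{glm thm}(a) supplies the existence and projectivity of $H(M)$ with $QH(M)\simeq M$ independently of this lemma), and there build the section by hand: extend $\tilde g: M \ra \bar H$ multiplicatively to an algebra map $G_0: T(M) \ra H$ with $QG_0 = g$, and then correct $G_0$ degree by degree so that it becomes comultiplicative. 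The obstruction to comultiplicativity at each stage is a primitive-valued discrepancy, and the explicit noncommutative-Witt-vector structure of the coproduct on $H(M)$ -- the technical heart of \cite{glm92} -- allows one to absorb it using the projectivity of $M$, giving an honest Hopf algebra lift. The general projective $P$ is then handled as a retract of a free $H(M)$, transporting the lift along the retraction.
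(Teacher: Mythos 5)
There is a genuine gap, and it sits exactly at the step that constitutes the content of the lemma. Your adjoint/pullback scaffolding is formally sound as far as it goes (the verification that $\eta_H\circ G=\hat g$ forces $QG=g$ is correct), but it only transfers the problem: the section $\sigma$ exists only if $\pi_P$ is an epimorphism, and you never establish this --- nor could any purely categorical argument, since if the unit $H\ra R(QH)$ pulled back to a surjection for formal reasons the lemma would be contentless, whereas \exref{bad example} shows that Hopf-algebra realizations of $V$--module data genuinely fail to exist absent splitness. Your fallback --- prove the free case $P=H(M)$ by extending $\tilde g$ multiplicatively to $G_0:T(M)\ra H$ and correcting degree by degree --- is precisely the lemma itself, in the form of GLM's representability statement that a Hopf algebra map $H(n,j)\ra H$ exists for (and is determined by) any element $x\in\bar H$ of the right degree with $V^{j+1}(x)=0$. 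For this you offer only the sentence that the discrepancy is ``primitive-valued'' and can be ``absorbed using the projectivity of $M$.'' That is both unsubstantiated and inaccurately described: the failure of comultiplicativity on a generator is a coassociativity cocycle in $\bar H\otimes\bar H$, not a primitive of $H$, and you give no reason it is a coboundary, nor that the correcting element can be chosen compatibly with the Verschiebung relations tying the generators $x_0,\dots,x_j$ of $H(n,j)$ together. Killing this obstruction is exactly the noncommutative Witt-vector analysis of \cite{glm92} that you explicitly black-box; note also that once the free case is granted, your entire pullback apparatus is redundant, since the general projective $P$ satisfies $P\simeq H(QP)$ directly (lift $\mathrm{id}_{QP}$ to $\bar P$, apply the free case, and conclude by \lemref{tensor lem} since $P$ is free as an algebra by \thmref{glm thm}(b)).

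For comparison: the paper does not reprove this lemma either. It imports it verbatim as \cite[Lemme 1.2.3.3]{glm92}, valid there for $k=\F_p$, and the only argument it supplies is the extension to an arbitrary perfect field, by observing that $k\otimes_{\F_p}\text{\underline{\hspace{.1in}}}$ on both $\Ho_*(\F_p)^u$ and $V$--$\M(\F_p)^u$ is left adjoint to a forgetful functor, hence preserves projectives and carries the projective generators of $V$--$\M(\F_p)^u$ to those of $V$--$\M(k)^u$ --- a point your write-up, which works over general $k$ throughout, silently skips. So while your outer organization (free case first, then general projectives by retraction) is a reasonable skeleton, the proposal defers the same hard step to \cite{glm92} that the paper does, while presenting it as a sketchable correction argument; as it stands it cannot be counted as a proof.
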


\begin{lem} \label{tensor lem} Let $f: A \rightarrow B$ be a morphism of graded associative connected $k$--algebras, with $B$ a free algebra.  If $Qf: QA \ra QB$ is a monomorphism, so is $f$. If $QF$ is an isomorphism, so is $f$.
\end{lem}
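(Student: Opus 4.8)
The plan is to compare $f$ with the map it induces on associated graded algebras, using the filtrations by powers of the augmentation ideals together with the fact that for a free algebra this associated graded is again free. Concretely, for a connected graded algebra $A$ I would filter it by $F^0 A = A$ and $F^m A = \bar A^m$ for $m \geq 1$, and set $E^0 A = \bigoplus_{m \geq 0} F^m A / F^{m+1}A$. Since $A$ is connected, $\bar A$ lives in strictly positive internal degrees, so $\bar A^m \cap A_d = 0$ once $m > d$; thus in each internal degree the filtration is finite. Multiplication makes $E^0 A$ a graded algebra generated by $E^0_1 A = \bar A/\bar A^2 = QA$, so there is a natural algebra surjection $\pi_A \colon T(QA) \ora E^0 A$.

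The key input is that when $B$ is free, say $B = T(W)$, the power filtration coincides with word length: $\bar B^m = \bigoplus_{n \geq m} W^{\otimes n}$, so $QB = W$ and $\pi_B \colon T(QB) \to E^0 B$ is an \emph{isomorphism}. Since $f$ is a map of connected algebras it carries $\bar A$ into $\bar B$, hence is filtered and induces $E^0 f \colon E^0 A \to E^0 B$ sitting in a commuting square whose other three sides are $T(Qf) \colon T(QA) \to T(QB)$, the surjection $\pi_A$, and the isomorphism $\pi_B$.

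Both assertions then follow formally. Over a field the functor $M \mapsto \bigoplus_n M^{\otimes n}$ is exact, so $Qf$ being a monomorphism (resp.\ isomorphism) makes $T(Qf)$ a monomorphism (resp.\ isomorphism). In either case $E^0 f \circ \pi_A = \pi_B \circ T(Qf)$ is a monomorphism; as $\pi_A$ is a left factor of a monomorphism it is itself a monomorphism, and being also surjective it is an isomorphism. Hence $E^0 f = \pi_B \circ T(Qf) \circ \pi_A^{-1}$ inherits the mono (resp.\ iso) property from $T(Qf)$. Finally, because the filtration is finite in each internal degree, a filtered map that is mono (resp.\ iso) on $E^0$ is itself mono (resp.\ iso); this yields the conclusion for $f$.

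I expect the main obstacle to be the step that promotes $\pi_A$ from a mere surjection to an isomorphism: a priori $A$ need not be free, and one must route through the free algebra $E^0 B \simeq T(QB)$ to deduce injectivity of $\pi_A$ (equivalently, that $A$ is itself free). The remaining ingredients — exactness of $\otimes$ over a field, and the standard principle that a filtered map inducing a mono/iso on associated graded is a mono/iso when the filtration is bounded in each degree — are routine.
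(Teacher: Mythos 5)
Your proof is correct. Note that the paper itself does not prove this lemma: it quotes it from Goerss--Lannes--Morel \cite[Lemme 1.2.3.2]{glm92}, adding only the remark that the proof given there works over any field, not just $\F_p$. Your argument --- comparing $f$ with $T(Qf)$ via the associated graded algebras of the augmentation-ideal filtrations, using that for a free algebra this filtration is the word-length filtration so that $\pi_B$ is an isomorphism, deducing from the commuting square that $\pi_A$ is forced to be an isomorphism as well, and then transferring the mono/iso statement back along a filtration that is finite in each internal degree --- is the standard proof of this fact and is essentially the argument of the cited reference, so there is nothing to fill in.
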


The proof in \cite{glm92} of this last lemma works for all fields $k$, not just prime fields.
This is also true for the theorem and the other lemma, but verifying this takes some work.  Alternatively, one can deduce the theorems for a general perfect field $k$ from the prime field case, as follows.

Tensoring with $k$ over $\F_p$ induces functors
$$ k \otimes_{\F_p}\text{\underline{\hspace{.1in}}}: \Ho_*(\F_p)^u \ra \Ho_*(k)^u$$
and
$$k \otimes_{\F_p}\text{\underline{\hspace{.1in}}}: V\text{--}\M(\F_p)^u \ra V\text{--}\M(k)^u$$
which are left adjoint to forgetful functors.

Formal arguments show that these functors will send projectives to projectives, and the second of these functors sends the projective generators of $V\text{--}\M(\F_p)^u$ to the projective generators of  $V\text{--}\M(k)^u$.  It is then straightforward to deduce \thmref{glm thm} and \lemref{glm lem} for a general perfect field $k$ from the case when $k=\F_p$.

\begin{addendum} \label{thm addendum} \thmref{glm thm} and \lemref{glm lem} also hold with the categories $V\text{--}\M(k)^u$ and $\Ho_*(k)^u$ replaced by $V\text{--}\M(k)$ and $\Ho_*(k)$.
\end{addendum}
\begin{proof} The point is that there are projectives in $V\text{--}\M(k)$ that are not projective in $V\text{--}\M(k)^u$, namely the projectives that have direct summand factors of the form $M(n,\infty)$.

To work with these, we proceed as follows.  Given an $M(n,j)$ with $j<\infty$ and projective (i.e. with $n$ odd or with $p$ odd and $p\nmid n$), let $H(n,j)$ be the projective Hopf algebra that one gets by applying \thmref{glm thm}(a).  Then \lemref{glm lem} tells us that the inclusion $M(n,j) \hra M(n,j+1)$ will be `covered' by a Hopf algebra map $H(n,j) \ra H(n,j+1)$, and this will also be an inclusion thanks to \lemref{tensor lem}. Now define $H(n,\infty) \in \Ho_*(k)$ to be the union of these.

Using that $\displaystyle \colim_{j \ra \infty} H(n,j) = H(n,\infty)$, one sees that $H(n,\infty)$ will be projective in $\Ho_*(k)$ as follows.  Given a diagram in $\Ho_*(k)$
$
\SelectTips{cm}{}
\xymatrix{
& H_1 \ar[d]^q  \\
H(n,\infty) \ar[r]^f & H_2 }
$

\noindent with $q$ surjective, let $S(j)$ be the set of lifts
$
\SelectTips{cm}{}
\xymatrix{
&& H_1 \ar[d]^q  \\
H(n,j) \ar[r] \ar@{..>}[rru] &H(n,\infty) \ar[r]^f & H_2. }
$

\noindent Then $S(0) \la S(1) \la S(2) \la \dots$ will be an inverse system of nonempty finite sets: nonempty since $H(n,j)$ is projective, and finite since $H_1$ has finite type and $H(n,j)$ is finitely generated as an algebra.  Thus $\displaystyle \lim_j S(j)$ will be nonempty, and any element of this inverse limit will be a lift of $f$.

Similarly, any $P \in \Ho_*(k)$ which is the coproduct of these $H(n,j)$'s will be projective, and for any such $P$, \lemref{glm lem} will apply, as long as $H$ is in $\Ho_*(k)$ and not just $\Ho_*(k)^u$.
\end{proof}

\begin{ex}  When $p=2$, $M(1,0) = \langle x \rangle$, $M(1,1) = \langle x,y \rangle$, and $M(1,2) = \langle x,y, z \rangle$, with $|x|=1$, $|y|=2$, and $|z|=4$, with $V(y)=x$ and $V(z)=y$.

Corresponding Hopf algebras are tensor algebras $H(1,0) = T(x)$, $H(1,1) = T(x,y)$, and $H(1,2) = T(x,y,z)$, with
$$ \Delta(x) = x \otimes 1 + 1 \otimes x,$$
$$ \Delta(y) = y \otimes 1 + x \otimes x + 1 \otimes y,$$
$$ \Delta(z) = z \otimes 1 +  xy \otimes x + x^3 \otimes x + + y \otimes y + x \otimes x^3 + x \otimes xy + 1 \otimes z.$$
\end{ex}

\begin{rem}  The authors of \cite{glm92} refer to the Hopf algebras $H(n,j)$ as algebras of `noncommutative Witt vectors', as the algebras of classical Witt vectors occur as their bicommutative quotients.  See \cite[\S 1.4]{glm92} for a nice discussion of this.
\end{rem}

\subsection{Proofs of \thmref{split thm} and \thmref{lifting thm}}

Note that \thmref{glm thm}, as enhanced by \adref{thm addendum}, implies \thmref{split thm} in the case when $M$ is a projective $V$--module: given such an $M$, there is a unique Hopf algebra $H(M) \in \Ho_*(k)$ such that $H(M)$ is free, and $QH(M) \simeq M$ as $V$--modules.  Since \thmref{glm thm} tells us that projective objects in $\Ho_*(k)$ are precisely the Hopf algebras $H(M)$ with $M$ projective, one sees that   \thmref{lifting thm} for such $M$ is just \lemref{glm lem}, as enhanced by \adref{thm addendum}.

Furthermore, if $\displaystyle M = \bigoplus_i M(n_i, j_i)$, with each $M(n_i,j_i)$ projective, then $H(M) \simeq \underset{i}{*} H(n_i,j_i)$, and this coproduct decomposition is unique.

Our goal now is to extend these results to arbitrary $M \in V\text{--}\M(k)$.

It is easy to define our Hopf algebras $H(M)$.  Recall that we have already defined $H(n,j)$, free as an algebra and with $QH(n,j) \simeq M(n,j)$, when $M(n,j)$ is projective, i.e. when $n$ is odd or when $p$ is odd and $n=2m$ with $p\nmid m$.

\begin{defns} Given $M \in V\text{--}\M(k)$, we define $H(M) \in \Ho_*(k)$ as follows. \\

\noindent{\bf (a)} If $p=2$ and $m$ is odd, let $H(2^im,j) = \Phi^i H(m,j)$.  If $p$ is odd, and $p\nmid m$, let $H(2p^im,j) = \Phi^i H(2m,j)$. \\

\noindent{\bf (b)} If $\displaystyle M = \bigoplus_i M(n_i, j_i)$, we let $H(M) = \underset{i}{*} H(n_i,j_i)$.
\end{defns}

\begin{lem} $H(M)$ is free as an algebra, $QH(M) \simeq M$ as $V$--modules, and $H(M)$ is split: i.e. $\overline{H(M)} \ra M$ has a section.
\end{lem}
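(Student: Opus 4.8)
The plan is to verify the three assertions—freeness, $QH(M)\simeq M$, and splitness—by reducing everything to the projective building blocks $H(n,j)$ via the two constructions in the definition: the Frobenius-twist operation $\Phi^i$ and the free-product operation $\underset{i}{*}$. The key point is that each of the three properties is already known for the projective generators $H(n,j)$ (with $n$ odd, or $p$ odd and $p\nmid n$), so I only need to check that $\Phi$ and $*$ preserve the relevant structure, and then assemble.

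First I would handle the single twisted factors $H(2^im,j)=\Phi^iH(m,j)$ (respectively $H(2p^im,j)=\Phi^iH(2m,j)$). By \propref{Q prop}(b), $\Phi H$ is again a cocommutative Hopf algebra and $V\colon H\ra\Phi H$ is a Hopf algebra map; iterating, each $H(n,j)$ above is an object of $\Ho_*(k)$. For the indecomposables $Q$-computation I would invoke \propref{Q prop}(c): there is a natural isomorphism $Q(\Phi H)\simeq \Phi(QH)$, so $QH(2^im,j)\simeq\Phi^i(QH(m,j))\simeq\Phi^i M(m,j)\simeq M(2^im,j)$ by \lemref{Phi lem}(a); likewise in the odd case. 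Freeness is the subtle point for twists: $\Phi$ reindexes and Frobenius-twists the underlying graded vector space, so $\Phi$ applied to a tensor algebra $T(W)$ is the tensor algebra $T(\Phi W)$ (using $\Phi(W^{\otimes k})\simeq(\Phi W)^{\otimes k}$ from the lemma that $\Phi$ is a tensor functor); thus $H(2^im,j)=\Phi^iH(m,j)$ is free on $\Phi^i$ of the generators of $H(m,j)$. This is where I expect the bookkeeping to need care, but it is conceptual rather than hard.

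Next I would assemble the free-product factors via $H(M)=\underset{i}{*}H(n_i,j_i)$. Here $QH(M)\simeq\bigoplus_i QH(n_i,j_i)\simeq\bigoplus_i M(n_i,j_i)=M$ directly from \propref{Q prop}(a), the statement that $Q$ sends coproducts to direct sums. Freeness is immediate because a coproduct (free product) of free associative algebras is again free, on the disjoint union of generating sets—this is exactly how the coproduct in $\Ho_*(k)$ is built as an algebra (\propref{Hopf alg properties prop}(c)). So both freeness and the indecomposables computation pass through $*$ formally.

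The remaining and genuinely substantive assertion is that $H(M)$ is \emph{split}, i.e. that $\overline{H(M)}\ra QH(M)\simeq M$ admits a $V$--module section. The cleanest route is to build the section compatibly with both operations. For the projective pieces $H(n,j)$ splitness is automatic since $QH(n,j)=M(n,j)$ is projective in $V$--$\M(k)$, so a section exists. For a twisted piece, I would apply $\Phi$ to a section of $H(m,j)$: since $\Phi$ is an exact tensor functor on $V$--$\M(k)$ and commutes with $Q$ (\propref{Q prop}(c)), applying $\Phi^i$ to a section $M(m,j)\ra\overline{H(m,j)}$ yields a $V$--module section of $\overline{H(2^im,j)}\ra M(2^im,j)$. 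Finally, for the free product I would take the direct sum of the individual sections $M(n_i,j_i)\ra\overline{H(n_i,j_i)}\ra\overline{H(M)}$, using that the canonical $V$--module maps $\overline{H(n_i,j_i)}\ra\overline{H(M)}$ exist (the coproduct inclusions) and that $\bigoplus_i M(n_i,j_i)=M$; the resulting composite splits $\overline{H(M)}\ra M$ because it is a section after passing to $Q$ by \propref{Q prop}(a). The main obstacle is ensuring the section lands in $\overline{H(M)}$ as a genuine $V$--module morphism and not merely a graded map, which is where the fact that $V\colon H\ra\Phi H$ is a Hopf-algebra map, hence compatible with the inclusions of free-product factors, does the work.
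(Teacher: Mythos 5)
Your proposal is correct and follows essentially the same route as the paper: sections exist trivially for the projective pieces, are transported through the twists by applying $\Phi^i$, and are assembled over the free product by composing $\oplus_i s_i$ with the coproduct inclusions and invoking $Q(\underset{i}{*}H_i)\simeq\bigoplus_i QH_i$. The only difference is that you spell out the freeness and $QH(M)\simeq M$ verifications (via $\Phi$ being a tensor functor and \propref{Q prop}), which the paper dismisses as ``evident by construction.''
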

\begin{proof} The first two parts are evident by construction. $\overline{H(M)} \ra M$ obviously has a section if $M$ is projective.  Applying $\Phi^i$ to such sections when $M = M(m,j)$ is projective (with $m$ even if $p$ is odd), defines a section for $\overline{H(p^im,j)} \ra M(p^im,j)$.  Finally, given a family of $V$--module sections $s_i: M(i) \ra \overline{H(M(i))}$ to $\overline{H(M(i))} \ra M(i)$, the composite
$$ \bigoplus_i M(i) \xra{\oplus_i s_i} \bigoplus_i \overline{H(M(i))} \ra \underset{i}{*} H(M(i))$$
shows that $H(\bigoplus_i M(i)) = \underset{i}{*} H(M(i))$ will also be split.
\end{proof}

To finish the proof of \thmref{split thm}, we need show that $H(M)$ is uniquely defined by the properties listed in this last lemma.   But first we prove \thmref{lifting thm}, which should be viewed as appropriate version of \lemref{glm lem} with $H(M)$ replacing the projective $P$ in the statement.

We remind readers of what \thmref{lifting thm} asserts: \\

{\em Let $M \in V\text{--}\M(k)$, and $H \in \Ho_*(k)$.  Given a $V$--module morphism $g: M \ra Q(H)$ that admits a lift to $\bar H$, there exists $G: H(M) \ra H$ in $\Ho_*(k)$ such that $QG = g: M \ra QH$.} \\

Note that the lifting hypothesis always holds if $M$ is a projective $V$--module, or if $H$ is a split Hopf algebra.

\begin{ex} Here is an example that perhaps will help readers appreciate this theorem, and aid in following the proof.

Let $k = \F_2$. Let $M=M(6,0)$ so that $H(M) = H(6,0) = T(z)$ with $z$ a primitive generator of degree 6.  Then Hopf algebra morphisms $H(6,0) \ra H$ correspond to primitives in $H_6$. Meanwhile, $V$--module maps $M(6,0) \ra \bar H$ correspond to elements $y \in H_6$ with $V(y)=0$.

Now let $H = H_*(\Omega \Sigma \mathbb CP^3;\F_2) = T(y_1,y_2,y_3)$, with $|y_i| = 2i$ and with
$$ \Delta(y_1) = y_1 \otimes 1 + 1 \otimes y_1,$$
$$ \Delta(y_2) = y_2 \otimes 1 + y_1 \otimes y_1 + 1 \otimes y_2,$$
$$ \Delta(y_3) = y_3 \otimes 1 +  y_2 \otimes y_1 + y_1 \otimes y_2 + 1 \otimes y_3.$$

The element $y_3$ satisfies $V(y_3) = 0$, but is not primitive.  \thmref{lifting thm} now guarantees that there exists another $y \in H_6$ that {\em is} primitive, with $y \equiv y_3 \mod \bar H^2$.  A little bit of fiddling reveals that $y = y_3 + y_1y_2 + y_1^3$ does the job.
\end{ex}

\begin{proof}[Proof of \thmref{lifting thm}]  It certainly suffices to prove the theorem when $M = M(n,j)$.  Furthermore, we already know the theorem is true when $M=M(n,j)$ is a projective $V$--module.

We are left having to prove the theorem when $M=M(p^in,j)$ with $i\geq 1$ and $0 \leq j \leq \infty$, in the cases
$
\begin{cases}
n \text{ is odd} & \text{if } p=2 \\ n \text{ is even and }p\nmid n & \text{if } p \text{ is odd}.
\end{cases}
$ \\
In these cases, $M(n,j)$ is a projective $V$--module.

It also suffices to assume that $j$ is finite, since $\displaystyle M(p^in, \infty) = \colim_{j \ra \infty} M(p^in,j)$ and $\displaystyle H(p^in, \infty) = \colim_{j \ra \infty} H(p^in,j)$.

Now suppose we are given a $V$--module map $g: M(p^in,j) \ra QH$ and a lifting of this $\tilde g: M(p^in,j) \ra \bar H$.  Our goal is to show that there is a map of Hopf algebras $G: H(p^in,j) \ra H$ with $QG=g$.

As a $V$--module, $M(p^in,j)$ is generated by its top class $x_j$ in degree $p^{i+j}n$, and $V^{j+1}(x_j) = 0$.  So $\tilde g$ corresponds to an element $x \in \ker V^{j+1}$ of degree $p^{i+j}n$.

Let $H[j+1]$ be the Hopf algebra kernel of $V^{j+1}: H \ra \Phi^{j+1}H$. By \propref{Hopf alg kernel prop},  $\ker(V^{j+1}) = (\bar H[j+1])$, so we can write $x = x^{\prime} + yz$, with $x^{\prime},y \in \bar H[j+1]$ and $z \in \bar H$.  Since $x^{\prime} \equiv x \mod \bar H^2$, $x^{\prime}$ will also correspond to a $V$--module map $\tilde g^{\prime}: M(p^in,j) \ra \bar H$ that is a lift of $g$, but now has image in $\bar H[j+1]$.

The $V$--module $M(p^in,j)$ fits into a short exact sequence
$$ 0 \ra M(n,i-1) \xra{\iota} M(n, i+j) \xra{\rho} M(p^in,j).$$
This induces maps of Hopf algebras
$$ H(n,i-1) \xra{\iota} H(n, i+j) \xra{\rho} H(p^in,j)$$
such that $\iota$ is 1-1, and $\rho$ induces an isomorphism
$$H(n,i+j)//H(n,i-1) \simeq H(p^in,j).$$
Note also that $H(n,i-1)$ is generated by elements in the image of $V^{j+1}$.

As $M(n,i+j)$ is projective, we can  apply the already proved $M(n,i+j)$ case of the theorem to the composite
$$M(n,i+j) \xra{\rho} M(p^in,j) \xra{\tilde g^{\prime}} \bar H[j+1]$$
to get a Hopf algebra map $G^{\prime}: H(n,i+j) \ra H$ factoring through $H[j+1]$, with $QG^{\prime} = g \circ \rho$.

As $G^{\prime}$ factors through $H[j+1]$, it will vanish on the generators of $H(n,i-1)$, and so $G^{\prime}$ will factor through $H(m,i+j)//H(n,i-1) = H(p^in,j)$.  The resulting map of Hopf algebras $G: H(n,j) \ra H$ will satisfy $QG = g$.
\end{proof}

The remaining part of \thmref{split thm} is now easily proved.  Suppose that $H \in \Ho_*(k)$ is split and free as an algebra.  If $g: M \xra{\sim} QH$ is a $V$--module isomorphism, we need to show that $H(M) \simeq H$.  \thmref{lifting thm} implies that there exists a map of Hopf algebras $G: H(M) \ra H$ such that $QG = g$.  But then \lemref{tensor lem} implies that $G$ is then also an isomorphism. \\

In proving \thmref{split thm}, we have also established a classification theorem.

\begin{thm} \label{classification thm}  Hopf algebras in $\Ho_*(k)$ that are split, and free as algebras, are precisely the Hopf algebras of the form $\underset{i}{*} H(n_i,j_i)$.
\end{thm}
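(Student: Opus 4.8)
The plan is to read off this classification from the results already assembled for \thmref{split thm}, since the theorem merely repackages them. I would prove the two asserted inclusions separately.

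First I would verify that every Hopf algebra of the form $\underset{i}{*} H(n_i,j_i)$ is split and free as an algebra. By the defining construction, $\underset{i}{*} H(n_i,j_i) = H(M)$ with $M = \bigoplus_i M(n_i,j_i)$, and the (unlabelled) lemma preceding \thmref{lifting thm} records exactly that each such $H(M)$ is free as an algebra, satisfies $QH(M) \simeq M$ as $V$--modules, and is split. I would only need to remark that this coproduct exists in $\Ho_*(k)$, which is guaranteed by the finite type condition of \propref{Hopf alg properties prop}(c).

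Next I would treat the reverse inclusion: given $H \in \Ho_*(k)$ that is split and free as an algebra, I set $M = QH$. Because $k$ is perfect, \thmref{V module class thm} lets me write $M \simeq \bigoplus_i M(n_i,j_i)$ as $V$--modules, so that $H(M) = \underset{i}{*} H(n_i,j_i)$ by definition. Choosing a $V$--module isomorphism $g: M \xra{\sim} QH$, the splitting of $H$ supplies the lift of $g$ to $\bar H$ needed to invoke \thmref{lifting thm}, yielding a Hopf algebra map $G: H(M) \ra H$ with $QG = g$; \lemref{tensor lem} then forces $G$ to be an isomorphism. Thus $H \simeq \underset{i}{*} H(n_i,j_i)$. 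This step is essentially identical to the final paragraph in the proof of \thmref{split thm}.

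The main obstacle is essentially nonexistent at this stage, all the real content having been packed into \thmref{split thm} and \thmref{lifting thm}. The only points demanding attention are bookkeeping: that the splitting hypothesis on $H$ is precisely what licenses the application of \thmref{lifting thm} (the lifting hypothesis there is automatic for split $H$), and that the $V$--module decomposition of $QH$ is faithfully reflected in the coproduct decomposition of $H(M)$, the latter following from the definition of $H(M)$ together with the additivity of $Q$ on coproducts in \propref{Q prop}(a).
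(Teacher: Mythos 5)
Your proposal is correct and takes essentially the same approach as the paper: the paper's entire ``proof'' is the remark that \thmref{split thm} and \thmref{lifting thm} already establish the classification, with one inclusion given by the unlabelled lemma preceding \thmref{lifting thm} and the other by the final paragraph of the proof of \thmref{split thm} (applying \thmref{lifting thm} via the splitting and then \lemref{tensor lem}). You have simply made explicit the same bookkeeping, including the use of \thmref{V module class thm} to decompose $QH$.
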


\section{Details of our cautionary example, \exref{bad example}} \label{bad example sec}

Recall the definitions of the two Hopf algebras in \exref{bad example}.

We work over $k=\F_2$.  As algebras $H_1 = T(x,y,z)= H_2$, with $|x|=1$, $|y|=2$, and $|z|=4$.

The coproduct structure is given as follows.  The classes $x,y$ are primitive in both $H_1$ and $H_2$, while the coproducts $\Delta_1$ and $\Delta_2$ act as follows on $z$:
$$\Delta_1(z) = z\otimes 1 + y \otimes y + 1 \otimes z.$$
and
$$\Delta_2(z) = z\otimes 1 + x^2 \otimes x^2 + 1 \otimes z.$$

\subsection{Algebraic details}  We check that $H_1$ and $H_2$ are isomorphic as both algebras and coalgebras, but not as Hopf algebras.

Obviously $H_1$ and $H_2$ are isomorphic as algebras.

To see that $H_1$ and $H_2$ are not isomorphic as Hopf algebras, we just note that the $V$--modules $QH_1$ and $QH_2$ are not isomorphic: $QH_1 = \langle x,y,z\rangle = QH_2$, with with $V(z) = y$ in $QH_1$, and $V = 0$ in $QH_2$.

To see that $H_1$ and $H_2$ are isomorphic as coalgebras, we describe an explicit isomorphism.  Any element in the monomial basis for $T(x,y,z)$ can be written in the form
$$ (x^{2i_1+\epsilon_1}y^{j_1}z^{k_1})(x^{2i_2+\epsilon_2}y^{j_2}z^{k_2}) \cdots (x^{2i_r+\epsilon_r}y^{j_r}z^{k_r})$$
with $\epsilon_a$ equal to 0 or 1. We write this as
$\displaystyle \prod_{a=1}^r x^{2i_a+\epsilon_a}y^{j_a}z^{k_a}$.

Then a coalgebra isomorphism $\Theta: H_1 \ra H_2$ is given by
$$ \Theta(\prod_{a=1}^r x^{2i_a+\epsilon_a}y^{j_a}z^{k_a}) = \prod_{a=1}^r x^{2j_a+\epsilon_a}y^{i_a}z^{k_a}.$$
Thus, for example, $\Theta(xyz)=x^3z$ and $\Theta(xy^2z^3x^4y^5z^6) = x^5z^3x^{10}y^2z^6$.

In checking that this is a coalgebra morphism,  it is illuminating to note that $H = T(x^2,y,z)$ is a subHopf algebra of both $H_1$ and $H_2$, and that $\Theta$ induces the Hopf algebra automorphism  on $H$ sending $x^2,y,z$ to $y,x^2,z$.

\subsection{Topological realization details}

We check that $H_1$ and $H_2$ can be topologically realized.

This is clear for $H_1$ as $H_1 \simeq H_*(\Omega \Sigma (S^1\vee \mathbb C\mathbb P^2);\F_2)$,
with $x,y,z$ corresponding to the nonzero homogeneous elements in $\widetilde H_*(S^1 \vee \mathbb C\mathbb P^2);\F_2)$.

Now we check that $H_2 \simeq H_*(\Omega (S^3\vee C);\F_2)$,
where  $C$ is the mapping cone of the composite
$S^4 \xra{\Sigma \eta} S^3 \xra{\eta} S^2$,
and $\eta: S^3 \ra S^2$ is the Hopf map.

By \cite[Corollary 3.2]{berstein}, the two inclusions $\Omega S^3 \hra \Omega(S^3\vee C) \hookleftarrow \Omega C$ induce a Hopf algebra isomorphism
$$ H_*(\Omega S^3; \F_2) * H_*(\Omega C;\F_2) \xra{\sim} H_*(\Omega (S^3\vee C);\F_2).$$
Furthermore, $H_*(\Omega S^3; \F_2) = T(y)$ with $|y|=2$.

It thus suffices to show the following.

\begin{prop} \label{H_*C prop} As a Hopf algebra, $H_*(\Omega C;\F_2) \simeq T(x,z)$ with $|x|=1$, $|z|=4$ and $\Delta(z) = z \otimes 1+ x^2 \otimes x^2 + 1 \otimes z$.
\end{prop}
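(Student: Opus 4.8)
The plan is to pin down the algebra structure of $H_*(\Omega C;\F_2)$ by standard means, reduce the coproduct of the degree $4$ generator to a single unknown coefficient, and then determine that coefficient by mapping in from the suspension $\Sigma \mathbb C\mathbb P^2$.

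First I would record that $\widetilde H_*(C;\F_2)$ is $\F_2$ in degrees $2$ and $5$, with generators $a,b$, and that both are primitive in the coalgebra $H_*(C;\F_2)$ (there is nothing in degrees $1,3,4$ for a reduced diagonal to hit). Because $\eta\circ\Sigma\eta$ is zero on $\F_2$--homology, the homology Serre spectral sequence of the path--loop fibration $\Omega C\ra PC\ra C$ (equivalently, the cobar construction on this primitively generated coalgebra, whose differential vanishes) gives $H_*(\Omega C;\F_2)\cong T(x,z)$ as an algebra, where $x$ is the loop--suspension of $a$, with $|x|=1$, and $z$ transgresses to $b$, with $|z|=4$. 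Thus $H_1=\langle x\rangle$, $H_2=\langle x^2\rangle$, $H_3=\langle x^3\rangle$, $H_4=\langle x^4,z\rangle$, and both $x$ and $x^4$ are primitive. By cocommutativity the reduced coproduct has the form $\bar\Delta(z)=\alpha(x\otimes x^3+x^3\otimes x)+\beta\, x^2\otimes x^2$ with $\alpha,\beta\in\F_2$; since $x^4$ is primitive these coefficients are independent of the choice of generator $z$ modulo decomposables, so the goal is precisely $\alpha=0$, $\beta=1$.

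Next I would build the comparison map. The square with rows $S^4\xra{\Sigma\eta}S^3\ra C(\Sigma\eta)$ and $S^4\xra{\eta\circ\Sigma\eta}S^2\ra C$, with right-hand vertical map $\eta$, commutes, hence induces a map of mapping cones $f\colon\Sigma\mathbb C\mathbb P^2=C(\Sigma\eta)\ra C$ that is the identity on the top cell and $\eta$ on the bottom cells. Then $\Omega f$ is a loop map, so $(\Omega f)_*$ is a morphism of Hopf algebras. By James, $H_*(\Omega\Sigma\mathbb C\mathbb P^2;\F_2)=T(u,v)$ with $|u|=2$, $|v|=4$ and $\Delta(v)=v\otimes1+u\otimes u+1\otimes v$, the term $u\otimes u$ reflecting $u^2\neq0$ in $H^*(\mathbb C\mathbb P^2)$. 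Naturality of the homology suspension shows $(\Omega f)_*(v)$ transgresses to $b$, hence equals $z$ modulo decomposables; modifying $z$ by the primitive $x^4$ I may assume $(\Omega f)_*(v)=z$. Applying the Hopf-algebra map to the coproduct of $v$ then yields $\bar\Delta(z)=(\Omega f)_*(u)\otimes(\Omega f)_*(u)$, so everything reduces to identifying $(\Omega f)_*(u)\in H_2(\Omega C)=\langle x^2\rangle$.

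The hard part will be this last identification, precisely because the homology suspension cannot detect it: its target $H_3(C)$ vanishes. To get around this I would argue on the bottom cell directly. The class $(\Omega f)_*(u)$ is $\hat f_*(a)$ for the adjoint $\hat f\colon\mathbb C\mathbb P^2\ra\Omega C$ of $f$, and $\hat f$ restricted to $S^2$ factors as $S^2\xra{\hat\eta}\Omega S^2\xra{\Omega j}\Omega C$, where $j\colon S^2\hra C$ and $\hat\eta$ is the adjoint of the Hopf map. Now $\hat\eta$ generates $\pi_2(\Omega S^2)\cong\pi_3(S^2)\cong\Z$, and under $\Omega S^2\simeq S^1\times\Omega S^3$ its Hurewicz image generates $H_2(\Omega S^2;\Z)\cong\Z$, reducing mod $2$ to the nonzero class $w^2$, where $H_*(\Omega S^2;\F_2)=\F_2[w]$ with $|w|=1$. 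Since $\Omega j$ is a ring map with $(\Omega j)_*(w)=x$, this gives $(\Omega f)_*(u)=\hat f_*(a)=x^2$, whence $\bar\Delta(z)=x^2\otimes x^2$, i.e. $\alpha=0$ and $\beta=1$, completing the identification of $H_*(\Omega C;\F_2)$ as the stated Hopf algebra.
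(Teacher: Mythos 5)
Your second half --- the determination of $\Delta(z)$ --- is correct and is in substance the paper's own argument: the paper likewise builds the map of cofibration sequences, passes to the adjoint $\tilde f\colon \mathbb C P^2 \ra \Omega C$, and uses that $\eta$ has Hopf invariant $1$ (so that the adjoint $\tilde\eta\colon S^2 \ra \Omega S^2$ sends the fundamental class to the square of the one--dimensional generator) to identify the image of the bottom class with $x^2$. Your phrasing via the splitting $\Omega S^2 \simeq S^1 \times \Omega S^3$ and the Hurewicz image of $\hat\eta$ is the same fact in different clothing.

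The genuine gap is in your first step, the claim that $H_*(\Omega C;\F_2) \cong T(x,z)$ as an algebra. Vanishing of the cobar differential on the primitively generated coalgebra $H_*(C;\F_2)$ only says that the $E_1$--term of the Eilenberg--Moore spectral sequence equals its $E_2$--term, namely $\mathrm{Cotor}^{H_*(C;\F_2)}(\F_2,\F_2) = T(\xi,\zeta)$ with $\xi,\zeta$ of total degrees $1$ and $4$. It does not rule out higher differentials, and there is honest room for one here: $d_2$ could carry $\zeta$ (filtration $1$, internal degree $5$) to $\xi^{\otimes 3}$ (filtration $3$, internal degree $6$); the bidegrees match, and multiplicativity is no obstruction since $d_2$ is a derivation, $\zeta$ is a generator, and $d_2(\xi)=0$ is forced for degree reasons. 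Your parenthetical ``equivalently, the Serre spectral sequence'' makes matters worse rather than better: the Serre spectral sequence of $\Omega C \ra PC \ra C$ is not the cobar complex, and its differentials are certainly not zero (everything must die, as $PC$ is contractible). So as written you have not established even the additive structure of $H_*(\Omega C;\F_2)$, and the later steps, which use $H_2 = \langle x^2 \rangle$ and $H_4 = \langle x^4, z\rangle$, are unsupported.

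The missing idea is exactly what the paper supplies, and --- ironically --- you have already built the map needed to supply it. Since the right--hand square of your map of cofibration sequences is a homotopy pushout, $(j \vee f)_*\colon \widetilde H_*(S^2 \vee \Sigma \mathbb C P^2;\F_2) \ra \widetilde H_*(C;\F_2)$ is onto, so the induced map of cobar complexes ($E_1$--terms) is onto, and both have zero $d_1$. The wedge $S^2 \vee \Sigma \mathbb C P^2 = \Sigma(S^1 \vee \mathbb C P^2)$ is a suspension, so its EMSS collapses (by James its $E_2$--term already has the size of $H_*(\Omega\Sigma(S^1\vee\mathbb C P^2);\F_2)$, leaving no room for differentials); naturality plus surjectivity of the map of $E_r$--terms then kills all higher differentials for $C$ as well. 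This produces the epimorphism of Hopf algebras $H_*(\Omega\Sigma(S^1 \vee \mathbb C P^2);\F_2) \ra H_*(\Omega C;\F_2)$ and the algebra isomorphism $H_*(\Omega C;\F_2) \cong T(x,z)$; with that in hand, your coproduct computation goes through verbatim.
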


\begin{proof}  Recall that if $X$ is simply connected, the Eilenberg-Moore spectral sequence converges to $H_*(\Omega X;\F_2)$ with $E^1$--term equal to the tensor algebra on $\Sigma^{-1}\tilde H_*(X;\F_2)$, and this spectral sequence collapses if $X$ is a suspension.

The commutative square
\begin{equation*}
\SelectTips{cm}{}
\xymatrix{
S^4  \ar@{=}[d] \ar[r]^{\Sigma \eta} & S^3 \ar[d]^{\eta}  \\
S^4 \ar[r]^{\eta \circ \Sigma \eta} & S^2  }
\end{equation*}
induces a map between cofibration sequences
\begin{equation*}
\SelectTips{cm}{}
\xymatrix{
S^4  \ar@{=}[d] \ar[r]^{\Sigma \eta} & S^3 \ar[d]^{\eta} \ar[r]^-i & \Sigma \mathbb C P^2 \ar[d]^f  \\
S^4 \ar[r]^{\eta \circ \Sigma \eta} & S^2 \ar[r]^j & C. }
\end{equation*}
The right square of this diagram is a homotopy pushout, and we see that there is a short exact sequence
$$ 0 \ra \widetilde H_*(S^3;\F_2) \xra{i_*} \widetilde H_*(S^2 \vee \Sigma \mathbb C P^2;\F_2) \xra{(j\vee f)_*} \widetilde H_*(C;\F_2) \ra 0).$$
Since $(j \vee f)_*$ is onto, and the EMSS computing $H_*(\Omega(S^2 \vee \Sigma \mathbb C P^2); \F_2)$ collapses, the same is true for the EMSS computing $H_*(\Omega C; \F_2)$. We conclude that there is an epimorphism of Hopf algebras
$$ H_*(\Omega \Sigma (S^1 \vee \mathbb C P^2;\F_2) \ra H_*(\Omega C;\F_2).$$
Furthermore, if we let $a \in H_1(\Omega S^2;\F_2)$ and $c \in H_4(\mathbb C P^2;\F_2)$ be the nonzero classes, and $\tilde f: \mathbb C P^2 \ra \Omega C$ be the adjoint to $f$, we see that $H_*(\Omega C;\F_2) = T(x,z)$ where $x=(\Omega j)_*(a)$ and $z = \tilde f_*(c)$.

It remains to show that $\Delta(z) = z \otimes 1 + x^2 \otimes x^2 + 1 \otimes z$. To show this we consider the adjoint of the right square in the previous diagram:
\begin{equation*}
\SelectTips{cm}{}
\xymatrix{
 S^2 \ar[d]^{\tilde \eta} \ar[r]^-i & \mathbb C P^2 \ar[d]^{\tilde f}  \\
 \Omega S^2 \ar[r]^{\Omega j} & \Omega C. }
\end{equation*}
Let $b \in H_2(S^2;\F_2)$ be the nonzero element.  Then $\Delta(c) = c \otimes 1 + i_*(b) \otimes i_*(b) + 1 \otimes c$. Meanwhile, since $\eta$ is the Hopf map (and thus has Hopf invariant 1!), we see that $\tilde \eta_*(b) = a^2$.  Thus $\tilde f_*(i_*(b)) = x^2$, so $\Delta(z) = z \otimes 1 + x^2 \otimes x^2 + 1\otimes z$ as needed.
\end{proof}

\section{Quasi--shuffle algebras} \label{quasi shuffle sec}

In this section, we offer a short discussion of the Hopf algebras $J(C)$, for $C \in \C_*(k)$, and $J^{\vee}(A)$, for $A \in \A_*(k)$.

\subsection{The free Hopf algebra $J(C)$}

Recall that $J: \C_*(k) \ra \Ho_*(k)$ is defined as the left adjoint to the forgetful functor.  From this definition, it is quite easy to see that, as an algebra, $J(C)$ will be the tensor algebra $\displaystyle T(C) = \bigoplus_{k=0}^{\infty} \bar C^{\otimes k}$.
Using that $T(C)$ is the free algebra generated by $\bar C$, the coproduct $\Delta_{J(C)}$ on $J(C)$ is then induced by the coproduct $\Delta_C$ on $C$: $\Delta_{J(C)}$ is the algebra map
$$ T(C) \ra T(C) \otimes T(C)$$
induced by the composite
$$ \bar C \xra{\bar \Delta_C} \overline{(C\otimes C)} = (\bar C\otimes k) \oplus (\bar C \otimes \bar C) \oplus (k \otimes C) \hra T(C) \otimes T(C).$$

It is not hard to track the components of $\Delta_{J(C)}$. The component
$$ \Delta^n_{l,m}: \bar C^{\otimes n} \ra \bar C^{\otimes l} \otimes \bar C^{\otimes m}$$
will be the part of the composite
$$ \bar C^{\otimes n} \xra{\bar \Delta_C^{\otimes n}} [(\bar C\otimes) k \oplus (\bar C \otimes \bar C) \oplus (k \otimes C)]^{\otimes n} \hra T(C) \otimes T(C)$$
landing in $\bar C^{\otimes l} \otimes \bar C^{\otimes m}$.  A nice way to describe this is as follows.

Let $\n$ be the ordered set with $n$ elements.  A surjection $\gamma: \bold p \ora \bold n$ induces a diagonal map $\Delta_{\gamma}: \bar C^{\otimes n} \ra \bar C^{\otimes p}$ by tensoring together the $n$ iterated diagonal maps $\bar C \ra \bar C^{\otimes \# p^{-1}(i)}$.  Then $\displaystyle \Delta^n_{l,m}= \sum_{\gamma} \Delta_{\gamma}$, with the sum running over surjections
$\gamma: \bold {l+m} \ora \bold n$ that are order preserving inclusions when restricted to either the first $l$ elements or the last $m$ elements.  Such a $\gamma$ corresponds to a pair of order preserving inclusions $\alpha: \bold l \ra \bold n$ and $\beta: \bold m \ra \bold n$ whose images cover all of $\bold n$.

\subsection{The cofree Hopf algebra $J^{\vee}(A)$}  We dualize the previous discussion.

The functor $J^{\vee}: \A^*(k) \ra \Ho^*(k)$ is defined as the right adjoint to the forgetful functor.  Then, as an coalgebra, $J^{\vee}(A)$ will be $\displaystyle T(A) = \bigoplus_{k=0}^{\infty} \bar A^{\otimes k}$, this time viewed as the cofree coalgebra cogenerated by $\bar A$.
The product $\nabla_{J^{\vee}(A)}$ on $J^{\vee}(C)$ is then induced by the product $\nabla_A$ on $A$: $\nabla_{J^{\vee}(A)}$ is the coalgebra map
$$ T(A) \otimes T(A) \ra T(A)$$
coinduced by the composite
$$ T(A) \otimes T(A) \twoheadrightarrow (\bar A\otimes k) \oplus (\bar A \otimes \bar A) \oplus (k \otimes A)= \overline{(A\otimes A)} \xra{\bar \nabla_A} \bar A.$$

The component $\nabla_{l,m}^n: \bar A^{\otimes l} \otimes \bar A^{\otimes m} \ra \bar A^{\otimes n}$ of $\nabla_{J^{\vee}(A)}$ has the following  description.
A surjection $\gamma: \bold p \ora \bold n$ defines a map
$$ \nabla_{\gamma}: \bar A^{\otimes p} \ra \bar A^{\otimes n}$$
by the formula
$$\nabla_{\gamma}(a_1 \otimes \cdots \otimes a_p) = (\prod_{j \in \gamma^{-1}(1)} a_j) \otimes \dots \otimes (\prod_{j \in \gamma^{-1}(n)} a_j).$$
Then $\displaystyle \nabla^n_{l,m}= \sum_{(\alpha, \beta)} \nabla_{\alpha+\beta}$, with the sum running over pairs of order preserving inclusions $\alpha: \bold l \ra \bold n$, $\beta: \bold m \ra \bold n$ whose images cover all of $\bold n$.

\begin{ex} Let $*$ be the product in $J^{\vee}(A)$.  Given $w,x,y,z \in \bar A$, all in even degrees if $k$ is not of characteristic 2,
\begin{equation*}
\begin{split}
(w\otimes x) \ast (y\otimes z) = & \text{\hspace{.05in}} [w \otimes x \otimes y \otimes z +  w \otimes y \otimes x \otimes z + w \otimes y \otimes z \otimes x
\\ & + y \otimes w \otimes x \otimes z + y \otimes w \otimes z \otimes x + y \otimes z \otimes w \otimes x]
\\ & +  [w \otimes x y \otimes z + w \otimes y \otimes x z+ w y \otimes x \otimes z
\\ & +  w y \otimes z \otimes x+ y \otimes w \otimes x z+ y \otimes w z \otimes x]
\\ & + w y \otimes x z.
\end{split}
\end{equation*}
Note that first group of terms here correspond to pairs  $\alpha, \beta: \bold 2 \ra \bold 4$ of order preserving inclusions whose images don't overlap: this is the classical shuffle product, and corresponds to $J^{\vee}(C)$ in the case when the product on $\bar C$ is zero.

\end{ex}

\section{Proofs of \thmref{char 0 theorem}, \thmref{char p theorem}, and \thmref{char p theorem dualized}} \label{easy proofs sec}

In this section we prove the parts of our theorems that are consequences of the classical theory \cite{milnor moore}.

\subsection{Char $k =0$: the proof of \thmref{char 0 theorem}} Let $k$ be a field of characteristic 0.  Recall that \thmref{char 0 theorem} went as follows.  Given $A, B \in \A^*(k)$, the following are equivalent. \\

\noindent{\bf (a)} $\bar A \simeq \bar B$, as graded $k$--vector spaces. \\

\noindent{\bf (b)} $J^{\vee}(A) \simeq J^{\vee}(B)$, as algebras. \\

\noindent{\bf (c)} $J^{\vee}(A) \simeq J^{\vee}(B)$, as Hopf algebras. \\

The implication (c)$\Rightarrow$(b) is clear.

The implication (b)$\Rightarrow$(a) follows from a Poincar\'e series argument.  Let $\chi_M(t)$ be the Poincar\'e series of a graded vector space $M \in \M(k)$.  Then
$$ \chi_{J^{\vee}(A)}(t) = \sum_{k=0}^\infty \chi_{\bar A}(t)^k = 1/(1-\chi_{\bar A}(t),$$
so that $\chi_{\bar A}(t) = 1-1/\chi_{J^{\vee}(A)}(t)$.
If (b) holds, then $\chi_{J^{\vee}(A)}(t) = \chi_{J^{\vee}(B)}(t)$, and so $\chi_{\bar A}(t) =\chi_{\bar B}(t)$.

Finally, the implication (a)$\Rightarrow$(c) will follow from the observation that there will be a Hopf algebra isomorphism $J^{\vee}(A) \simeq J^{\vee}(A_{triv})$, where $A_{triv}$ is $A$ given the trivial multiplication\footnote{In other words, $xy=0$ for all $x,y \in \bar A$.  Thus $J^{\vee}(A_{triv})$ is the graded shuffle algebra generated by $\bar A$.}.  Newman and Radford show this in \cite[Theorem 1.12]{newman radford}. An alternative proof goes as follows.  Since $J^{\vee}(A)$ is commutative, \cite[Proposition 4.17]{milnor moore} tells us that the composite $$\bar A = PJ^{\vee}(A) \xra{i} \overline{J^{\vee}(A)} \xra{\pi} QJ^{\vee}(A)$$ is monic.  Choose a linear map $q: QJ^{\vee}(A) \ra \bar A$ such that $q\circ \pi \circ i$ is the identity.  Now note that $q \circ \pi$ corresponds to an algebra map $q \circ \pi: J^{\vee}(A) \ra A_{triv}$, and this, in turn, corresponds to a map of Hopf algebras $\tilde q: J^{\vee}(A) \ra J^{\vee}(A_{triv})$.  By construction, the induced map on primitives, $P\tilde q: \bar A \ra \bar A_{triv}$, is the identity, and thus $\tilde q$ is an isomorphism, as $J^{\vee}(A)$ and $J^{\vee}(A_{triv})$ are cofree coalgebras.

\subsection{Char $k =p>0$: the proofs of \thmref{char p theorem}, and \thmref{char p theorem dualized}}

Now let $k$ be a perfect field of characteristic p.  Recall that \thmref{char p theorem} went as follows.  Given $A, B \in \A^*(k)$, the following are equivalent. \\

\noindent{\bf (a)} $\bar A \simeq \bar B$, as graded $F$--modules. \\

\noindent{\bf (b)} $J^{\vee}(A) \simeq J^{\vee}(B)$, as algebras. \\

\noindent{\bf (c)} $J^{\vee}(A) \simeq J^{\vee}(B)$, as Hopf algebras. \\

\thmref{char p theorem dualized} was the equivalent dual formulation.

The implication (c)$\Rightarrow$(b) is clear, and the hardest implication, (a)$\Rightarrow$(c) was proved (in the dual version) in the introduction, using \thmref{split thm}.

Putting these implications together shows that (a)$\Rightarrow$(b), but we note that there is a direct easier proof of this. Since $J^{\vee}(A) = TA = \bigoplus_{k=0}^{\infty}  \bar A^{\otimes k}$ as an $F$--module, $\bar A$ as an $F$--module clearly determines $\overline{J^{\vee}(A)}$ as an $F$--module.  So the next lemma shows that (a)$\Rightarrow$(b).

\begin{lem} \label{alg structure lem} The algebra structure of any $H \in \Ho^*(k)$ is determined by the $F$--module structure of $\bar H$.
\end{lem}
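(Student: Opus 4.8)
The plan is to reduce the statement to a combinatorial fact via the classical structure theory of commutative Hopf algebras, and then to recover that combinatorial data from the unique decomposition of $\bar H$ supplied by \thmref{F module class thm}. First I would invoke Borel's structure theorem \cite{milnor moore}: since $k$ is perfect and $H \in \Ho^*(k)$ is connected, commutative, and of finite type, $H$ is isomorphic \emph{as an algebra} to a tensor product of monogenic Hopf algebras, each of the form $k[x]$ (polynomial), $k[x]/(x^{p^s})$ (truncated), or, when $p$ is odd and $|x|$ is odd, $\Lambda(x)$ (exterior). Hence the algebra isomorphism type of $H$ is completely encoded by the multiset of pairs recording, for each generator, its degree $|x|$ together with its type (polynomial, truncated at height $p^s$, or exterior). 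The lemma then amounts to showing that this multiset is an invariant of $\bar H$ regarded as an $F$--module.

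Next I would extract this multiset from the $F$--module $\bar H$. The degrees of the generators are immediate, since $QH$ is determined by $\bar H$ and its graded dimension records the multiset $\{|x|\}$ of generator degrees. The heights are encoded in the Frobenius: for a single generator $x$ of even degree the chain $x,\ F(x)=x^p,\ F^2(x)=x^{p^2},\dots$ is infinite exactly when the factor is polynomial and has length $s$ exactly when the factor is $k[x]/(x^{p^s})$, while exterior and odd-degree generators carry no Frobenius and appear as summands $N(n,0)$ with $n$ odd. By \thmref{F module class thm} the $F$--module $\bar H$ decomposes uniquely into indecomposables $N(n,j)$, equivalently it is determined by the ranks of the iterated maps $F^s\colon \Phi^s \bar H \to \bar H$ in each degree. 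I would therefore compute these ranks from the monogenic data — using that $\bar H$ has a monomial basis on which $F$ multiplies all exponents by $p$, annihilating a monomial once an exponent reaches its height — and check that the resulting assignment from monogenic multisets to rank data is injective.

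The main obstacle is that $\bar H$ of a tensor product is \emph{not} the direct sum of the augmentation ideals of the factors: one has $\overline{A\otimes B} = \bar A \oplus \bar B \oplus (\bar A \otimes \bar B)$, and $F$ mixes the factors since $F(a\otimes b)=a^p\otimes b^p$. Consequently the Frobenius chain of a generator $x$ passes through decomposable elements $x^{p^s}\in \bar H^2$ that are entangled with products of other generators, so the height of $x$ cannot be read off any single indecomposable summand. I expect to resolve this by induction on degree: having recovered all generators in degrees less than $n$, the contribution of their products to $\bar H$ in degree $n$ (as an $F$--module) is determined, and the remaining part of $\bar H_n$ must consist of new generators whose heights are then pinned down by the already-determined rank data. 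Making this degree-by-degree bookkeeping precise — in particular, detecting exactly where each generator's power chain terminates inside the decomposable part — is the technical heart of the argument; everything else follows formally from Borel's theorem and \thmref{F module class thm}.
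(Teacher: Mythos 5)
Your overall strategy is the same as the paper's: invoke Borel's structure theorem from \cite{milnor moore} to write $H$, as an algebra, as a tensor product of monogenic algebras, and then recover the multiset of pairs (generator degree, height) from the $F$--module $\bar H$ by induction on degree. But there are two genuine problems with the proposal as written. First, your opening claim that ``the degrees of the generators are immediate, since $QH$ is determined by $\bar H$'' is circular: $QH = \bar H/\bar H^2$ requires knowing the product on $H$, which is precisely what is being reconstructed. That the graded dimensions of $QH$ are an invariant of the $F$--module $\bar H$ is part of the conclusion of the lemma, not an available input. Second, the step you correctly identify as the technical heart --- detecting where each generator's Frobenius chain $x, x^p, x^{p^2},\dots$ terminates, given that these chains run through decomposables entangled with products of other generators --- is exactly where the paper does its work, and you leave it as a plan (``I expect to resolve this by induction on degree\dots'') rather than an argument. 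As it stands the proposal is a correct outline with its crucial step unexecuted.

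The paper's device for closing this gap is worth knowing: instead of trying to subtract the decomposable contribution inside $\bar A$, it passes to the quotient algebra $A \otimes_{A[n-1]} k$, where $A[n-1] \subset A$ is the subalgebra generated in degrees at most $n-1$. Killing the ideal generated by the lower generators disentangles the chains completely: in this quotient the sub--$F$--module generated by the degree-$n$ classes is an honest direct sum $\bigoplus_i N(n,j_i)$, which reads off both the number and the heights of the new degree-$n$ monogenic factors, and then $A[n] \simeq A[n-1] \otimes \bigotimes_i A(n,j_i)$. Your subtractive bookkeeping \emph{can} be made rigorous --- for a tensor product of monogenics, $\bar A$ decomposes into cyclic $F$--chains indexed by $p$-reduced monomials, and the survival length of a product monomial is the minimum of the survival lengths of its factors, hence is determined by generators of strictly lower degree; comparing multiplicities of $N(n,j)$ summands via \thmref{F module class thm} then pins down the new degree-$n$ generators --- but this multiplicity count is precisely what you would need to write out, and the quotient trick is the clean way to package it.
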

\begin{proof}  The point is that there is a structure theorem for algebras underlying a connected commutative Hopf algebra.  Define monogenic algebras as follows: if $p$ and $n$ are odd, let $A(n,0) = \Lambda(x)$, with $|x|=n$. If $p=2$ or $n$ is even, let
\begin{equation*}
A(n,j) =
\begin{cases}
k[x]/(x^{p^{j+1}}) & \text{if } 0 \leq j < \infty \\ k[x] & \text{if } j= \infty.
\end{cases}
\end{equation*}
where $|x|=n$.  Together, Proposition 7.8 and Theorem 7.11 of \cite{milnor moore} imply that, as an algebra, any $H \in \Ho^*(k)$ is the tensor product of $A(n,j)'s$.

So suppose $A \in \A^*(k)$ is the tensor product of such monogenic algebras.  We check that $A$ is determined by its structure as an $F$--module.  Let $A[n] \subset A$ be the subalgebra generated by elements in $A$ of degree at most $n$.  Then $A(0)=k$, and, for $n\geq 1$, if the sub $F$--module of $A\otimes_{A[n-1]} k$ generated by $n$--dimensional classes is isomorphic to $\displaystyle \bigoplus_i M(n,j_i)$, then
$$A[n] \simeq A[n-1] \otimes \bigotimes_i A(n,j_i).$$
Inductively, we see that each of the algebras $A[n]$ is determined by $F$--module structure, and thus the same is true for $A$.
\end{proof}

To finish the proof of \thmref{char p theorem}, it remains to check that (b)$\Rightarrow$(a). Since $J^{\vee}(A) = TA$ as an $F$--module, it suffices to prove the next lemma.

\begin{lem}  Let $N$ be an $F$--module.  Then $N$ is determined by the $F$--module $TN$.
\end{lem}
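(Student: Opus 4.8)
Let $N$ be an $F$--module. I want to recover $N$ from the $F$--module structure of $TN = \bigoplus_{k=0}^\infty N^{\otimes k}$. The plan is to extract $N$ from $TN$ purely functorially, so that an $F$--module isomorphism $TN \cong TN'$ forces $N \cong N'$.

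\textbf{The approach.} The key observation is that $TN$ carries more than just an $F$--module structure in the abstract: it is the $F$--module underlying a graded algebra (the tensor algebra), and in fact the underlying $F$--module of $J^\vee(A)$ when $N = \bar A$. But the cleanest route is to recognize $N$ as the indecomposables, or dually the primitives, of a canonical object we can build from $TN$ alone. Since $TN$ as a graded vector space has Poincaré series $1/(1-\chi_N(t))$, the graded dimensions of $N$ are already determined by those of $TN$ via $\chi_N(t) = 1 - 1/\chi_{TN}(t)$, exactly as in the characteristic $0$ argument. So the only thing at stake is the \emph{Frobenius} structure on $N$, not its dimensions.

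\textbf{Key steps, in order.} First I would pin down the Frobenius. The Frobenius on $TN$ restricts to the Frobenius on the summand $N = N^{\otimes 1}$, but I cannot see the summand $N^{\otimes 1}$ intrinsically inside $TN$ from the $F$--module structure alone, so I need a filtration that \emph{is} intrinsic. The natural candidate is the tensor-length (word-length) filtration; but this must be reconstructed from the $F$--module structure. I expect the cleanest formulation is to use \lemref{F module class thm}: decompose $N \simeq \bigoplus N(n_i, j_i)$ into indecomposables, and show the multiset of summands $\{N(n,j)\}$ is recoverable from the multiset appearing in $TN$. Here the multiplicativity of $\Phi$ and of the tensor product, together with \lemref{Tate lemma}, controls how Frobenius acts across tensor factors. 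Concretely, I would compute the decomposition of $TN$ into $F$--module indecomposables in terms of the decomposition of $N$, and then invert that bookkeeping.

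\textbf{The main obstacle.} The hard part is the bookkeeping: the $F$--module decomposition of a tensor product $N(n,j) \otimes N(m,\ell)$ is not simply a sum of $N(\cdot,\cdot)$'s with transparent parameters, because Frobenius $F(x\otimes y) = x^p \otimes y^p$ interacts with the grading in a way that depends on $n, m$ and on $p$. I would organize the computation by lowest-degree generators: the bottom class of each indecomposable summand of $N$ produces a detectable ``minimal'' generator in $TN$, and an induction on degree (as in the proof of \lemref{alg structure lem}, where one builds up via $A[n-1] \to A[n]$) lets me peel off the summands of $N$ one degree at a time. Thus the plan is to induct on internal degree, at each stage using the already-reconstructed lower part of $N$ to identify the newly-appearing indecomposable summands in $TN$, and to verify their Frobenius height $j$ by tracking the action of $F$ on these new generators. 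The delicate point to verify is that no summand of $N$ in degree $n$ is masked by products of lower-degree classes carrying the same Frobenius behavior; this is where the uniqueness in \thmref{F module class thm} does the real work.
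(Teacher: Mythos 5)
Your proposal takes a genuinely different route from the paper's, and it can be made to work; in fact your ``main obstacle'' turns out to be no obstacle at all. The paper never decomposes tensor products of indecomposables: it encodes the multiplicities $a(n,j)$ of the summands $N(n,j)$ of $N$ into power series and trades them for the Poincar\'e series $\chi_N^j(t)$ of functorial quotients of $N$ attached to $F^{j+1}$ and to $F$--torsion, chosen to be multiplicative under $\otimes$, so that $\chi_{TN}^j=1/(1-\chi_N^j)$; this is solved for $\chi_N^j$, and finite differencing then recovers the $a(n,j)$. (A caveat if you compare with the text: the cokernel of a power of $F$ is \emph{not} multiplicative under $\otimes$ --- for $p=2$ and $N=N(1,1)$ the cokernel of $F$ on $N\otimes N$ has series $t^2+2t^3$, not $t^2$ --- the invariant that is multiplicative, and repairs the argument, is the coimage $N/\ker F^{j+1}$.) Your degree-by-degree recovery closes because of two facts that you flag but do not verify. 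First, a word $N(n_1,j_1)\otimes\cdots\otimes N(n_k,j_k)$ with $k\geq 2$ is concentrated in degrees $\geq n_1+\cdots+n_k$, so every indecomposable summand of it has bottom class in degree strictly greater than each $n_i$; consequently the summands of $TN$ with bottom class in degree $n$ coming from words of length $\geq 2$ are exactly the degree--$n$ summands of $\bigoplus_{k\geq 2}(N_{<n})^{\otimes k}$, where $N_{<n}$ is the sum of the summands of $N$ with bottom degree $<n$ --- determined by the part of $N$ your induction has already reconstructed, and with no need for any closed-form decomposition of tensor products, since isomorphic words have isomorphic decompositions. Second, finite type makes every multiplicity in a fixed degree finite, so in the multiset identity (degree--$n$ summands of $TN$) $=$ (degree--$n$ summands of $N$) $+$ (contributions of words of length $\geq 2$) the second term may be cancelled; this cancellation, licensed by the uniqueness in \thmref{F module class thm}, is exactly what dissolves your ``masking'' worry: a summand of $N$ may coincide in isomorphism type with a product contribution, but it cannot hide, because equal finite multisets are being subtracted --- no intrinsic identification of $N=N^{\otimes 1}$ inside $TN$ is ever required. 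As for what each approach buys: the paper's argument is shorter and induction-free, but hinges on choosing precisely the right multiplicative invariant; yours is more elementary and robust, and makes visible where each summand of $N$ reappears inside $TN$.
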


\begin{proof}  We prove this using a slightly elaborate Poincar\'e series argument.

Recall that \thmref{F module class thm} said that an $F$--module $N$ can be uniquely decomposed as a direct sum of the basic $F$--modules $N(n,j)$ defined in Definition \ref{F module defn 2}.  Thus we have a direct sum decomposition
 $$ N = \bigoplus_{n=1}^{\infty} \bigoplus_{j=0}^{\infty} a(n,j)N(n,j),$$
where $a(n,j) = 0$ if $p$ and $n$ are odd and $j >0$.

We let $a_N^j(t) = \sum_{n=1}^{\infty} a(n,j)t^n$. Our goal is to show that the set of power series $\{a_N^j(t), 1 \leq j \leq \infty\}$ is determined by $\{a_{TN}^j(t), 1 \leq j \leq \infty\}$.

We do this by switching to a more convenient set of power series.  It is useful to regard the $F$--module structure on $N$ as a nondegree preserving twisted linear map $F: N\ra N$.

For $1\leq j<\infty$, let $\chi_N^j(t) = \chi_{\coker\{F^{j+1}: N \ra N\}}(t)$, and let $\chi_N^{\infty}(t) = \chi_{N/tor(N)}(t)$, where $tor(N) = \{x \in N \ | \ F^jx=0 \text{ for }j>>0\}$.  It is not hard to see that when $1\leq j<\infty$, $$\chi_N^j(t) =  b_N^0(t) + b_N^1(t^p)+ \dots + b_N^j(t^{p^j})$$
where $\displaystyle b_N^j(t) = \sum_{k= j}^{\infty} a^k_N(t) + a^{\infty}_N(t)$, and also that $\displaystyle \chi_N^{\infty}(t) = \sum_{j=0}^{\infty} a^{\infty}_N(t^{p^j})$.

It follows that $a_N^{\infty}(t) = \chi_N^{\infty}(t)- \chi_N^{\infty}(t^p)$, and that
\begin{equation*}
\begin{split}
a^j_N(t^{p^{j+1}}) &
= b_N^j(t^{p^{j+1}}) - b_N^{j+1}(t^{p^{j+1}})  \\
  & = [\chi_N^j(t^p)-\chi_N^{j-1}(t^p)] - [\chi_N^{j+1}(t) - \chi_N^j(t)].
\end{split}
\end{equation*}
Thus the set $\{a_N^j(t), 1 \leq j \leq \infty\}$ determines and is determined by $\{\chi_N^j(t), 1 \leq j \leq \infty\}$.

Finally we check that $\chi_N^j(t)$ determines and is determined by $\chi_{TN}^j(t)$ for each $j$.  This follows by observing that $\chi_{N^{\otimes k}}^j(t) = (\chi_N^j(t))^k$, so that
$$ \chi_{TN}^j(t) = 1/(1-\chi_N^j(t)).$$
\end{proof}

\section{Examples, applications, and related results} \label{applications sec}

\subsection{A characterization of primitive Hopf algebras}

The `if' part of the next theorem is a nice application of \thmref{lifting thm}.

\begin{thm}  Let $k$ be a perfect field of characteristic $p>0$. A Hopf algebra $H \in \Ho_*(k)$ is primitively generated if and only if $H$ is split and $QH$ is a trivial $V$--module.
\end{thm}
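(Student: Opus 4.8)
The statement characterizes primitively generated Hopf algebras in $\Ho_*(k)$ as exactly those that are split with $QH$ a trivial $V$--module. I will prove the two directions separately, since they rely on quite different inputs.

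For the \emph{only if} direction, suppose $H$ is primitively generated, so that the natural map $PH \to QH$ is onto. First I would check that $QH$ has trivial Verschiebung: since the Verschiebung $V: H \to \Phi H$ is a map of Hopf algebras (\propref{Q prop}(b)) and $Q(\Phi H) \simeq \Phi(QH)$ (\propref{Q prop}(c)), the induced map on indecomposables is $\Phi(\mathrm{id})$ applied to the $V$--module structure; the point to exploit is that in characteristic $p$ the $p^{th}$ power of a primitive is decomposable (indeed $(\sum x_i)^p = \sum x_i^p$ lands in $\bar H^2$ after expanding, using that each primitive satisfies $\Delta(x)=x\otimes 1 + 1\otimes x$). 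Concretely, for a primitive $x$ the class $V$ sends $x$ to corresponds to $x^p \bmod \bar H^2 = 0$, so $V$ vanishes on the image of $PH$ in $QH$, which is all of $QH$. For splitness, I would lift a basis of $QH$ to primitives using surjectivity of $PH \to QH$; since a section valued in primitives automatically commutes with $V$ (both sides kill the relevant classes because $QH$ has trivial Verschiebung and $V(x)=x^p$ is decomposable), this section is a $V$--module map, so $H$ is split.

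For the \emph{if} direction -- the interesting one, flagged as an application of \thmref{lifting thm} -- suppose $H$ is split with $QH$ a trivial $V$--module. A trivial $V$--module $M$ is a direct sum of copies of the $M(n,0)$'s (the one-dimensional generators), which are \emph{projective} $V$--modules by \propref{V module prop}. Thus $M = QH$ is projective, so $H(M)$ is a projective Hopf algebra and the canonical surjection $\bar H \to QH$ admits a section. Now I would invoke \thmref{lifting thm}: taking $g = \mathrm{id}: QH \to QH$ (which lifts to $\bar H$ by splitness), there is a Hopf algebra map $G: H(QH) \to H$ with $QG = \mathrm{id}$. The construction of $H(M)$ for a trivial $V$--module $M$ realizes each generator $M(n,0)$ by a \emph{primitive} generator (as in the $H(6,0)=T(z)$ example in the text, where $z$ is primitive), so the image of $G$ is generated by primitives, hence contained in the sub-Hopf-algebra of $H$ generated by $PH$. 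Since $QG$ is an isomorphism, $G$ hits a set of algebra generators of $H$ modulo decomposables, so these primitive images generate $H$ as an algebra; therefore $H$ is primitively generated.

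The main obstacle will be the bookkeeping in the \emph{only if} direction, specifically verifying cleanly that a section of $\bar H \to QH$ valued in primitives is automatically a $V$--module morphism, and dually that triviality of the Verschiebung on $QH$ is forced. The cleanest route is to package both facts through \propref{Q prop}: the identity $Q(\Phi H)\simeq \Phi(QH)$ intertwines the Hopf-algebra Verschiebung $V: H \to \Phi H$ with the $V$--module Verschiebung on $QH$, so that the statement ``the $p$th power of a primitive is decomposable'' translates exactly into ``$V=0$ on $QH$,'' and compatibility of any primitive section with $V$ becomes the commutativity of a square both of whose vertical legs are the zero map. Everything else is then formal.
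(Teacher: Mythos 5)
Your skeleton is the paper's own (lift $QH$ through $PH$ for the forward direction; apply \thmref{lifting thm} to $g=\Id$ for the converse), but both halves contain an error, and the one in the ``only if'' direction is a genuine conceptual gap. You justify the vanishing of the Verschiebung on $QH$ by identifying $V$ with ``$p$th power modulo decomposables.'' That is not what $V$ is: for $H \in \Ho_*(k)$ the Verschiebung is defined from the \emph{coproduct}, as the composite $H \xra{\Delta} (H^{\otimes p})^{\Sigma_p} \ora \Phi H$, while the $p$th power map is the Frobenius, which lives on the commutative dual $H^{\vee}$, not on $H$. Since any $p$th power is trivially a product, your reading would force $V=0$ on $QH$ for \emph{every} $H \in \Ho_*(k)$; this is false --- in \exref{bad example} the paper exhibits $QH_1$ with $V(\bar z) = \bar y \neq 0$ --- and it would also collapse the theorem to ``primitively generated $\Leftrightarrow$ split,'' which that same example refutes. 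The fact you actually need is $PH \subseteq \ker V$, and its correct (easy) proof is: for a primitive $x$, the iterated coproduct $\Delta^{(p-1)}(x) = \sum_{i=1}^{p} 1^{\otimes(i-1)} \otimes x \otimes 1^{\otimes (p-i)}$ is, up to the unit $(p-1)!$, the $\Sigma_p$--norm of $x \otimes 1 \otimes \cdots \otimes 1$, hence dies in $\coker(N_{\Sigma_p}) \simeq \Phi H$ (\lemref{Tate lemma}). Granting that, the rest of your forward direction is fine and matches the paper: surjectivity of $PH \ra QH$ gives triviality of $V$ on $QH$, and any linear section of $PH \ra QH$ is automatically a $V$--module section because both relevant composites with $V$ vanish.

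In the ``if'' direction your main line is the paper's and is correct --- $\Id: QH \ra QH$ lifts to $\bar H$ by splitness, \thmref{lifting thm} produces $G: H(QH) \ra H$ with $QG = \Id$, $G$ is onto since $QG$ is, and $H(QH)$ is primitively generated because each $H(n,0)=T(z)$ with $z$ primitive --- but the claim you lead with, that a trivial $V$--module is projective, is false. By \propref{V module prop}, at $p=2$ only the $M(n,0)$ with $n$ odd are projective; for instance $M(2,0)$ is not, since the surjection $M(1,1) \ora M(2,0)$ (top class to generator) admits no $V$--module section. Indeed, were all $M(n,0)$ projective, ``$QH$ trivial'' would already imply ``$H$ split,'' making the splitness hypothesis redundant --- contradicted by $H_2$ of \exref{bad example}, which has trivial $QH_2$ yet is neither split nor primitively generated. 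Fortunately this claim is idle in your argument: the lift you feed into \thmref{lifting thm} comes from the splitness hypothesis, and the description of $H(n,0)$ as a tensor algebra on a primitive generator holds for all $n$ (in the non-projective cases $H(n,0)$ is defined by applying $\Phi^i$ to a projective case, and $\Phi$ preserves primitives). Deleting the projectivity remark leaves a correct proof of this direction.
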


We note that primitively generated $H \in H_*(k)$ are precisely the enveloping algebras of $p$--restricted Lie algebras by \cite[Theorem 6.11]{milnor moore}.

\begin{proof}  It is clear from the definitions that for all $H \in  \Ho_*(k)$, $PH$ is contained in the kernel of  $V: \bar H \ra \bar H$.  Thus if $H \in H_*(k)$ is primitively generated, i.e.~ the composite $PH \ra \bar H \ra QH$ is onto, then clearly  $QH$ is a trivial $V$--module split by any $k$--linear section of this composite.

Conversely, if $H$ is split, then \thmref{lifting thm} implies that there is a Hopf algebra morphism $G: H(QH) \ra H$ that induces the identity on indecomposables.  Such an $G$ will necessarily be onto. If $QH$ is also a trivial $V$--module, then $H(QH)$ is primitively generated by construction, and we deduce that $H$ will be primitively generated as well.
\end{proof}

\subsection{The Hopf algebra $H_*(\Omega \Sigma Y;k)$ is a stable invariant}

If $p$ is a prime and $X$ is a space, then the $p$th power map on $H^*(X;\F_p)$ identifies with a Steenrod operation:
\begin{equation*}
x^p =
\begin{cases}
Sq^{|x|}x & \text{if } p=2 \\ P^{|x|/2}x & \text{if $p$ is odd and $|x|$ is even}.
\end{cases}
\end{equation*}
As Steenrod operations are stable operations, we conclude that the $F$--module structure on $H^*(X;\F_p)$ is determined by the stable homotopy type of $X$.  Recalling that $J^{\vee}(H^*(X;k)) = H^*(\Omega \Sigma X;k)$ for $X$ of finite type, \thmref{char p theorem} (together with the easier \thmref{char 0 theorem}) has the following topological consequence.

\begin{cor}
If two based spaces $X$ and $Y$ of finite type are stably homotopy equivalent, then $H^*(\Omega \Sigma X;k)$ and $H^*(\Omega \Sigma Y;k)$ are isomorphic Hopf algebras for all fields $k$.
\end{cor}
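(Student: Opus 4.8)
The plan is to reduce the assertion to the purely algebraic classification theorems by verifying that every piece of input data those theorems require is a stable invariant, and then to handle the coefficient field with a base-change argument. A based stable homotopy equivalence $X \simeq Y$ induces, for any field $k$, an isomorphism of reduced cohomology groups $\tilde H^*(X;k) \simeq \tilde H^*(Y;k)$ as graded $k$-vector spaces, since cohomology with field coefficients is represented by a spectrum and hence factors through the stable homotopy category. Writing $A = H^*(X;k)$ and $B = H^*(Y;k)$ and using the James isomorphism $J^{\vee}(H^*(X;k)) \simeq H^*(\Omega \Sigma X;k)$ (valid because $X$ has finite type), it suffices to produce a Hopf algebra isomorphism $J^{\vee}(A) \simeq J^{\vee}(B)$.

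First I would dispose of characteristic $0$. Here the graded vector space isomorphism $\bar A \simeq \bar B$ is exactly condition (a) of \thmref{char 0 theorem}, so implication (a)$\Rightarrow$(c) immediately gives $J^{\vee}(A) \simeq J^{\vee}(B)$ as Hopf algebras, and there is nothing further to prove.

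In characteristic $p$ the bare vector space isomorphism is not enough, and one must upgrade it to an isomorphism of $F$-modules. The key observation, recorded in the display preceding the statement, is that on $H^*(-;\F_p)$ the Frobenius $x \mapsto x^p$ coincides with the Steenrod operation $Sq^{|x|}$ when $p=2$, and with $P^{|x|/2}$ when $p$ is odd and $|x|$ is even; a degree count confirms these land in degree $p|x|$, as the notion of $F$-module demands. Because Steenrod operations are stable cohomology operations, the isomorphism $\tilde H^*(X;\F_p) \simeq \tilde H^*(Y;\F_p)$ arising from a stable equivalence commutes with them and is therefore an isomorphism of $F$-modules. Since $\F_p$ is a perfect field, \thmref{char p theorem}, implication (a)$\Rightarrow$(c), now yields a Hopf algebra isomorphism $J^{\vee}(H^*(X;\F_p)) \simeq J^{\vee}(H^*(Y;\F_p))$ over $\F_p$.

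Finally I would pass from $\F_p$ to an arbitrary field $k$ of characteristic $p$. The point is that $J^{\vee}$ commutes with extension of scalars: the underlying graded space $\bigoplus_n \bar A^{\otimes n}$, the deconcatenation coproduct, and the quasi-shuffle product of \secref{quasi shuffle sec} are all defined by formulas compatible with $-\otimes_{\F_p} k$, so $J^{\vee}(A \otimes_{\F_p} k) \simeq J^{\vee}(A) \otimes_{\F_p} k$ as Hopf algebras, and likewise for $B$. Combined with the universal-coefficient identification $H^*(X;k) \simeq H^*(X;\F_p) \otimes_{\F_p} k$ for finite type $X$, tensoring the $\F_p$-isomorphism of the previous paragraph up to $k$ delivers the required Hopf algebra isomorphism over $k$. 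The hard part is precisely this last step: \thmref{char p theorem} is stated only for perfect $k$, so the genuine content is the realization that one can work over the perfect prime field $\F_p$ and then extend scalars, which sidesteps the perfectness hypothesis and is what makes the conclusion valid for \emph{all} fields $k$.
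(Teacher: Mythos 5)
Your proposal is correct, and its core is the same as the paper's: identify the Frobenius on mod-$p$ cohomology with the top Steenrod operation ($Sq^{|x|}$ for $p=2$, $P^{|x|/2}$ for $p$ odd), use stability of Steenrod operations to conclude the $F$--module structure is a stable invariant, and feed this into \thmref{char p theorem} (with \thmref{char 0 theorem} handling characteristic $0$). Where you differ is in your final base-change step, and this is a genuine improvement in completeness: the paper's proof, read literally, only produces the $F$--module isomorphism over $\F_p$ and then cites \thmref{char p theorem}, which is stated for \emph{perfect} fields, so the corollary's claim ``for all fields $k$'' is not fully justified there for non-perfect $k$ of characteristic $p$. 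Your observation that $J^{\vee}$ commutes with extension of scalars -- $J^{\vee}(A \otimes_{\F_p} k) \simeq J^{\vee}(A) \otimes_{\F_p} k$ as Hopf algebras, since the deconcatenation coproduct and quasi-shuffle product are defined by formulas compatible with flat base change -- together with $H^*(X;k) \simeq H^*(X;\F_p) \otimes_{\F_p} k$ for finite type $X$, cleanly fills this gap: one gets the isomorphism over $\F_p$ and tensors it up. So your route buys a complete argument for arbitrary fields at the cost of one extra (easy) lemma, while the paper's terser argument implicitly leaves that reduction to the reader.
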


\begin{rem} In the situation of the corollary, must there exist a Hopf algebra isomorphism  $H^*(\Omega \Sigma X;k) \simeq H^*(\Omega \Sigma Y;k)$ preserving all Steenrod operations?
\end{rem}

\subsection{When is a quasi-shuffle algebra polynomial?}

Let $k$ be a perfect field of characteristic $p$.  As was noted in \lemref{alg structure lem}, classical theory as in \cite{milnor moore} implies that the algebra structure of any $H \in \Ho^*(k)$ is determined by its structure as an $F$--module. In particular, one learns that $H$ will be a polynomial algebra (on even dimensional classes, if $p$ is odd) exactly when $F: \bar H \ra \bar H$ is monic.

Since $J^{\vee}(A) = \bigoplus_{k=0}^{\infty}  \bar A^{\otimes k}$ as an $F$--module, we see that $F: \overline{J^{\vee}(A)} \ra \overline{J^{\vee}(A)}$ will be monic if and only if $F: \bar A \ra \bar A$ is monic. We thus have the following theorem.

\begin{thm}  Let $k$ be a perfect field of characteristic $p$. Given a commutative algebra $A \in \A^*(k)$, the quasishuffle algebra $J^{\vee}(A)$ will be polynomial exactly in the following cases:
\begin{itemize}
\item $p=2$ and the squaring map $F: A \ra A$ is monic.
\item $p$ is odd, $A$ is concentrated in even dimensions, and the $p$th power map $F: A \ra A$ is monic.
\end{itemize}
\end{thm}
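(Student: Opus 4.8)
The plan is to obtain the theorem by combining two statements recorded in the discussion preceding it: that a connected commutative Hopf algebra over a perfect field is polynomial, on even-degree generators when $p$ is odd, exactly when its Frobenius is injective; and that $F$ is injective on $\overline{J^{\vee}(A)}$ exactly when it is injective on $\bar A$. Granting these, the theorem drops out by unwinding the two cases: for $p=2$ the condition ``$J^{\vee}(A)$ polynomial'' is literally ``$F$ monic on $\overline{J^{\vee}(A)}$'', which transfers to ``$F$ monic on $\bar A$''; for $p$ odd one additionally needs even-degree concentration, and since $\overline{J^{\vee}(A)}=\bigoplus_{k\ge 1}\bar A^{\otimes k}$ this holds for $J^{\vee}(A)$ iff it holds for $\bar A$.

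To establish the first statement I would return to the structure theorem underlying \lemref{alg structure lem}: as an algebra, any $H\in\Ho^*(k)$ is a tensor product of the monogenic factors $A(n,j)$, and I would match each factor with the basic $F$-module it contributes to $\bar H$ under \thmref{F module class thm}. Concretely, $A(n,\infty)=k[x]$ contributes the infinite chain $x,x^p,x^{p^2},\dots$, i.e. $N(n,\infty)$, on which $F$ is injective; $A(n,j)=k[x]/(x^{p^{j+1}})$ with $j<\infty$ contributes $N(n,j)$, whose top class $x^{p^j}$ lies in $\ker F$; and, for $p$ and $n$ odd, $A(n,0)=\Lambda(x)$ contributes the one-dimensional trivial module $N(n,0)$, again killed by $F$ (here using the convention that $F$ is only defined on even-degree classes when $p$ is odd). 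Hence $F$ is injective on $\bar H$ iff every summand is an $N(n,\infty)$, i.e. iff every monogenic factor is a true polynomial generator $k[x]$; for $p$ odd this simultaneously rules out the exterior generators, so $H$ is then polynomial and concentrated in even degrees, exactly as the theorem requires.

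For the second statement I would use the $F$-module identification $\overline{J^{\vee}(A)}\cong\bigoplus_{k\ge 1}\bar A^{\otimes k}$ established in the proof of \thmref{char p theorem}, where each $\bar A^{\otimes k}$ carries the tensor $F$-module structure. By the monoidality of $\Phi$ one has $\Phi(\bar A^{\otimes k})\simeq(\Phi\bar A)^{\otimes k}$, under which the Frobenius $F\colon\Phi(\bar A^{\otimes k})\to\bar A^{\otimes k}$ is $F^{\otimes k}$; since a tensor product of injective $k$-linear maps is injective, $F$ monic on $\bar A$ forces $F$ monic on every $\bar A^{\otimes k}$, hence on the sum. Conversely, in this tensor $F$-module the diagonal $F$ preserves tensor length, so restricting to the $k=1$ summand shows that $F$ monic on $\overline{J^{\vee}(A)}$ forces $F$ monic on $\bar A$.

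The main obstacle is the identification used in the second step. The $p$-th power in the quasi-shuffle product does not respect tensor length --- already for $p$ odd one has $w*w=w^2+2\,w\otimes w$ for $w\in\bar A$ of even degree, mixing lengths $1$ and $2$ --- so ``$\overline{J^{\vee}(A)}=T\bar A$ as an $F$-module'' is an abstract $F$-module isomorphism, not an identity of the literal $p$-th power with the diagonal Frobenius, and must be quoted carefully from the machinery of \thmref{char p theorem} rather than read off the product formula. The remaining delicacy is the even/odd bookkeeping for $p$ odd in the first step, which is routine but is the other place where the degree conventions must be kept straight.
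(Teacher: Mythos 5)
Your proof is correct and is essentially the paper's own argument: the paper likewise combines the structure theorem behind \lemref{alg structure lem} (``$H \in \Ho^*(k)$ is polynomial, on even classes if $p$ is odd, exactly when $F$ is monic on $\bar H$'') with the $F$--module identification $\overline{J^{\vee}(A)} \simeq \bigoplus_{k\geq 1} \bar A^{\otimes k}$, which the paper asserts with no more justification than you give. Your closing caution is more than is needed: the literal $p$-th power in $J^{\vee}(A)$ does preserve tensor length, since writing $u^{*p}$ as a sum over $p$-tuples of order-preserving inclusions, the cyclic group of order $p$ permutes the non-diagonal tuples in free orbits whose terms agree (all relevant classes being of even degree when $p$ is odd), so these cancel mod $p$ and leave exactly the diagonal Frobenius $F^{\otimes k}$ on each $\bar A^{\otimes k}$ --- your example $w*w$ is a square, not a $p$-th power, when $p$ is odd.
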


The algebra of quasi-symmetric functions on $k$ is the Hopf algebra
$$QSym(k) = J^{\vee}(k[y]) = H^*(\Omega \Sigma \mathbb C \mathbb P^{\infty};k),$$ where $|y|=2$.

\begin{cor} $QSym(k)$ is polynomial over every field.
\end{cor}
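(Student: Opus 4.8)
The plan is to dispose of the positive characteristic and characteristic $0$ cases separately, since the theorem immediately preceding the corollary is stated only for $\Char k = p > 0$. In both cases the relevant algebra is $A = k[y]$ with $|y| = 2$, so that $QSym(k) = J^{\vee}(A)$.

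For $\Char k = p > 0$ I would apply the preceding theorem directly. The augmentation ideal $\bar A$ has basis $\{y^n \mid n \geq 1\}$, concentrated in even degrees $2, 4, 6, \dots$, and the Frobenius acts by $F(y^n) = y^{pn}$. Thus $F \colon \bar A \to \bar A$ is injective---equivalently, $k[y]$ is an integral domain, so its $p$th power map has trivial kernel. When $p = 2$ this verifies the monicity hypothesis of the first bullet of the theorem; when $p$ is odd, the even-degree concentration together with monicity verifies the second bullet. Either way the theorem yields that $J^{\vee}(k[y]) = QSym(k)$ is polynomial.

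For $\Char k = 0$ the preceding theorem does not apply, and I would instead invoke classical structure theory. Since $\bar A = \overline{k[y]}$ is concentrated in even degrees, so is every $\bar A^{\otimes n}$, and hence $QSym(k) = \bigoplus_{n \geq 0} \bar A^{\otimes n}$ is concentrated in even degrees. By the classical structure theorem for connected commutative Hopf algebras over a field of characteristic $0$ \cite{milnor moore}, such a Hopf algebra is free as a graded commutative algebra---polynomial on its even-degree generators and exterior on its odd-degree generators. As $QSym(k)$ has no odd-degree classes, it has no exterior generators, and is therefore polynomial. (Alternatively, one may first use \thmref{char 0 theorem} to replace $J^{\vee}(A)$ by the shuffle algebra $J^{\vee}(A_{triv})$, which on the evenly graded space $\bar A$ is manifestly polynomial.)

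The main obstacle is conceptual rather than computational: one must recognize that the hypotheses of the preceding theorem restrict it to positive characteristic, so a genuinely different input---the freeness of connected commutative Hopf algebras in characteristic $0$---is needed to cover all fields. Once this is in place, each case reduces to a single elementary observation: injectivity of Frobenius on a polynomial ring, respectively concentration in even degrees.
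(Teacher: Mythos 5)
Your strategy is essentially the paper's own: the corollary appears there with no separate proof, being an immediate application of the preceding theorem to $A=k[y]$ (concentrated in even degrees, with $F$ monic because $k[y]$ is a domain), supplemented in characteristic $0$ by the classical Milnor--Moore freeness theorem for connected commutative Hopf algebras, exactly as you do. Both of those cases are handled correctly.

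There is, however, one genuine gap: the corollary claims polynomiality over \emph{every} field, and your two cases do not exhaust all fields. The preceding theorem is stated only for \emph{perfect} fields of characteristic $p$ --- perfectness is essential to the Borel/Milnor--Moore decomposition into monogenic algebras that underlies \lemref{alg structure lem}, and hence the theorem itself --- so a field such as $\F_p(t)$ is covered by neither of your cases. You flag that the theorem's hypotheses exclude characteristic $0$, but they also exclude imperfect fields. The repair is one line: the quasi-shuffle construction commutes with extension of scalars, $J^{\vee}(A)\otimes_k k' \cong J^{\vee}(A\otimes_k k')$ (the same compatibility the paper invokes from Baker--Richter in the Ditters discussion immediately following the corollary), so $QSym(k)\cong QSym(\F_p)\otimes_{\F_p}k$; since the prime field is perfect, $QSym(\F_p)$ is polynomial by the theorem, and a polynomial algebra remains polynomial after any field extension. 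A minor quibble: in your alternative characteristic-$0$ argument, the shuffle algebra on an evenly graded space is not ``manifestly'' polynomial --- that is itself a nontrivial classical theorem (e.g.\ polynomiality on Lyndon words) --- though your primary Milnor--Moore argument makes this moot.
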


We briefly discuss how to extend our results to integral ones.  One lets $\A^*(\Z)$ and $\Ho^*(\Z)$ respectively be the categories of connected graded commutative algebras and Hopf algebras that are finitely generated free abelian groups in each degree.  Analogously to before, one then defines
$$ J^{\vee}: \A^*(\Z) \ra \Ho^*(\Z)$$
to be right adjoint to the forgetful functor.

For example, the universal ring of quasi-symmetric functions, $QSym(\Z)$, identifies with $J^{\vee}(\Z[y]) = H^*(\Omega \Sigma \mathbb C \mathbb P^{\infty};\Z)$, where $|y|=2$.

The so-called Ditters conjecture was the conjecture that $QSym(\Z)$ is polynomial.  This seems to have been first proved by Hazewinkel in \cite{hazewinkel}, who first notes that one can just check that $QSym(\F_p)$ is polynomial for all primes $p$.  Even better, Baker and Richter's method of making this reduction,   \cite[Proposition 2.4]{baker richter},  applies to show that given $A \in \A^*(\Z)$,  $J^{\vee}(A)$ will be polynomial if and only if $J^{\vee}(A) \otimes \F_p = J^{\vee}(A \otimes \F_p)$ is polynomial for all primes $p$.

We can thus conclude the following.

\begin{cor} Given  $A \in \A^*(\Z)$,  $J^{\vee}(A)$ will be polynomial if and only if for all primes $p$ and all $x \in A$, $x^p \equiv 0 \mod p \Rightarrow x \equiv 0 \mod p$.
\end{cor}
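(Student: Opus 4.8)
The plan is to read off the result by chaining together the integral-to-mod-$p$ reduction recorded just above with the characterization of polynomiality over a perfect field, and then translating the resulting mod-$p$ conditions into a single integral divisibility statement. First I would invoke the Baker--Richter reduction \cite[Proposition 2.4]{baker richter}, which gives that $J^{\vee}(A)$ is polynomial over $\Z$ if and only if $J^{\vee}(A) \otimes \F_p = J^{\vee}(A \otimes \F_p)$ is polynomial over $\F_p$ for every prime $p$. This converts the single integral question into a family of mod-$p$ questions, each of which has already been answered.

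Next I would apply the theorem above characterizing when $J^{\vee}(-)$ is polynomial over a perfect field. For $p = 2$ it states that $J^{\vee}(A \otimes \F_2)$ is polynomial exactly when the squaring map $F \colon A \otimes \F_2 \ra A \otimes \F_2$ is monic, while for odd $p$ it states that $J^{\vee}(A \otimes \F_p)$ is polynomial exactly when $A \otimes \F_p$ is concentrated in even degrees and the $p$th power map $F$ is monic.

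The one point that needs a short argument --- and the crux of the whole statement --- is that for odd $p$ the hypothesis that $F$ is monic already forces $A \otimes \F_p$ to be concentrated in even degrees, so both criteria collapse to the single condition that $F \colon A \otimes \F_p \ra A \otimes \F_p$ be monic. Indeed, since $A$ is torsion free and graded commutative, any odd-degree $x \in A$ satisfies $2x^2 = 0$, hence $x^2 = 0$ and therefore $x^p = 0$ for $p \geq 3$; thus a nonzero odd-degree class in $A \otimes \F_p$ would lie in the kernel of $F$, contradicting monicity. Consequently, for every prime $p$ the relevant criterion is exactly that $F$ be injective on $A \otimes \F_p$.

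Finally I would unwind injectivity of $F$ into the stated integral form. Writing $\bar x$ for the image in $A \otimes \F_p$ of $x \in A$, one has $\bar x = 0$ if and only if $x \equiv 0 \mod p$, and $F(\bar x) = \overline{x^p} = 0$ if and only if $x^p \equiv 0 \mod p$; since every element of $A \otimes \F_p$ lifts to some $x \in A$, injectivity of $F$ is precisely the implication $x^p \equiv 0 \mod p \Rightarrow x \equiv 0 \mod p$ for all $x \in A$. Assembling this equivalence over all primes $p$ yields the corollary. The main obstacle is conceptual rather than computational --- recognizing the even-degree collapse that lets the two cases of the polynomiality theorem be packaged into one uniform divisibility condition; everything else is bookkeeping resting on the two cited results.
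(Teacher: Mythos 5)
Your proposal is correct and is essentially the paper's own argument: the paper gives no separate proof beyond ``We can thus conclude,'' meaning exactly the combination you describe of the Baker--Richter reduction (\cite[Proposition 2.4]{baker richter}) with the preceding theorem characterizing polynomiality of $J^{\vee}$ over a perfect field. Your explicit handling of the odd-prime case --- odd-degree classes square to zero by graded commutativity and torsion-freeness, so monicity of the $p$th power map on $A \otimes \F_p$ already forces concentration in even degrees --- is precisely the bookkeeping the paper leaves implicit, and it is right.
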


\begin{ex} Let $y$ and $z$ both have even grading.  Then $J^{\vee}(\Z[y,z]/(yz))$ is polynomial.
\end{ex}

\begin{rem} Hazewinkel doesn't use the coalgebra structure in his proof that $QSym(\F_p)$ is polynomial, but Baker and Richter \cite{baker richter} more sensibly do, with a proof like we have here.
\end{rem}

\subsection{An example: $H^*(\Omega \Sigma \mathbb C \mathbb P^2; k)$}

Let $k$ be a perfect field of characteristic $p$. Even when $F: \bar A \ra \bar A$ isn't monic, it isn't hard to use the $F$--module structure of $A \in \Ho^*(k)$ to explictly determine the algebra structure of $J^{\vee}(A)$.

We illustrate this by describing the algebra structure of $H^*(\Omega \Sigma \mathbb C \mathbb P^2; k) = J^{\vee}(k[y]/(y^3))$ for prime fields $k$.

First consider the case when $k=\Q$.  We know that $H^*(\Omega \Sigma \mathbb C \mathbb P^2; \Q)$ will be polynomial, with generators dual to the primitives of $H_*(\Omega \Sigma \mathbb C \mathbb P^2; \Q)$. These in turn correspond to a basis for the free Lie algebra generated by $\tilde H_*(\mathbb C \mathbb P^2; \Q)$, which is a two dimensional vector space with basis elements of homogeneous degree 2 and 4.

Now consider the case when $k=\F_p$ with $p$ an odd prime.  As an $F$--module, $\tilde H^*(\mathbb C \mathbb P^2;\F_p)$ is trivial, and thus the same is true for  $H^*(\Omega \Sigma \mathbb C \mathbb P^2; \F_p)$.  By dimension counting, one sees that, if
$$H^*(\Omega \Sigma \mathbb C \mathbb P^2; \Q) \simeq \bigotimes_i \Q[y_i],$$
then
$$H^*(\Omega \Sigma \mathbb C \mathbb P^2; \F_p) \simeq \bigotimes_i \bigotimes_{j=0}^{\infty} \F_p[y_{i,j}]/(y_{i,j}^p),$$
with $|y_{i,j}| = p^j|y_i|$.

Finally, consider the case when $k=\F_2$. Now $\tilde H^*(\mathbb C \mathbb P^2;\F_2) \simeq N(2,1)$, the $F$--module with basis $\{y,F(y)\}$, where $|y|=2$.  It is easy to check that
$$ N(2,1)^{\otimes k} = N(2k,1) \oplus \text{(a trivial $F$--module)},$$
and thus, as an $F$--module,
$$H^*(\Omega \Sigma \mathbb C \mathbb P^2; \F_2) \simeq \bigoplus_{k=1}^{\infty} N(2k,1) \oplus \text{(a big trivial $F$--module)}.$$
From this, one can conclude that, as an algebra
$$H^*(\Omega \Sigma \mathbb C \mathbb P^2; \F_2) \simeq \bigotimes_{j=1}^{\infty} \F_2[x_j]/(x_j^4) \otimes \text{(a big exterior algebra)},$$
where $|x_j| = 2^j$.

\end{document}